\theoremstyle{plain}
\newtheorem{thm}{\protect\theoremname}[section]
\theoremstyle{plain}
\newtheorem{lem}[thm]{\protect\lemmaname}
\theoremstyle{plain}
\newtheorem{rem}[thm]{\protect\remarkname}
\theoremstyle{plain}
\theoremstyle{plain}
\newcommand{\R}{\mathbb{R}}
\newcommand{\N}{\mathbb{N}}
\newcommand{\cNt}{\mathcal{N}^{\Theta}}
\newcommand{\rank}{\operatorname{rank}}
\newcommand{\st}{\operatorname{s.\!t.\,}}
\newcommand{\sign}{\operatorname{sign}}
\title{\vspace{-1.7cm}
	Low-Rank plus Sparse Decomposition of Covariance Matrices\\ using Neural Network Parametrization}
\author{Michel Baes\thanks{RiskLab, Department of Mathematics, ETH Zurich, e-mail: $michel.baes@math.ethz.ch$}\and Calypso Herrera\thanks{Department of Mathematics, ETH Zurich, e-mail: $calypso.herrera@math.ethz.ch$}\and Ariel Neufeld\thanks{Division of Mathematical Sciences, NTU Singapore, e-mail: $ariel.neufeld@ntu.edu.sg$}  \and Pierre Ruyssen\thanks{
		Google Brain, Google Zurich, 
		e-mail: 
		$pierrot@google.com$
}}
\providecommand{\lemmaname}{Lemma}
\providecommand{\theoremname}{Theorem}
\providecommand{\remarkname}{Remark}
\providecommand{\definitionname}{Definition}
\providecommand{\examplename}{Example}
\providecommand{\lemmaname}{Lemma}
\providecommand{\theoremname}{Theorem}
\providecommand{\remarkname}{Remark}
\providecommand{\definitionname}{Definition}
\providecommand{\examplename}{Example}
\definecolor{light-gray}{gray}{0.90}
\begin{document}
%
\title{Low-Rank plus Sparse Decomposition \\of Covariance Matrices\\ using Neural Network Parametrization}
%
%
%

\author{Michel Baes, 
        Calypso Herrera, 
        Ariel Neufeld, 
        and~Pierre Ruyssen
\thanks{M. Baes is with the RiskLab, Department of Mathematics,
	
	 ETH Zurich. 
	e-mail: michel.baes@math.ethz.ch.}
\thanks{C. Herrera is with the Department of Mathematics, ETH Zurich. 
	
	e-mail: calypso.herrera@math.ethz.ch.}
\thanks{
A. Neufeld 
is with the Division of Mathematical Sciences, 

NTU Singapore.
e-mail: ariel.neufeld@ntu.edu.sg}
\thanks{P. Ruyssen is with 	Google Brain, Google Zurich.  
	
			e-mail: pierrot@google.com.}
\thanks{Manuscript uploaded June 15, 2021. 
}
}

%
%

\markboth{
}%
{Shell \MakeLowercase{\textit{et al.}}: Bare Demo of IEEEtran.cls for IEEE Journals}
%



\maketitle

\begin{abstract}
This paper revisits the problem of decomposing a positive semidefinite
matrix as a sum of a matrix with a given rank plus a sparse matrix.
An immediate application can be found in portfolio optimization, when
the matrix to be decomposed is the covariance between the different
assets in the portfolio. Our approach consists in representing the
low-rank part of the solution as the product $MM^{T}$, where $M$
is a rectangular matrix of appropriate size, parametrized by the
coefficients of a deep neural network. We then use a gradient descent
algorithm to minimize an appropriate loss function over the parameters
of the network. We deduce its convergence rate to a local optimum
from the Lipschitz smoothness of our loss function. We show that the rate of convergence grows polynomially in the dimensions of the input, output, and the size of each of the hidden layers.
\end{abstract}

\begin{IEEEkeywords}
Correlation Matrices, Neural Network Parametrization, Low-Rank + Sparse Decomposition, Portfolio Optimization.
\end{IEEEkeywords}

%
\IEEEpeerreviewmaketitle


\section{Introduction}

We present a  new approach to decompose a possibly large covariance matrix $\Sigma$ into the sum of a positive semidefinite low-rank matrix $L$ plus a sparse matrix $S$. Our approach consists  in fixing an (upper bound for the) rank $k$ of $L$ by defining 
$L:=MM^T$ for a suitable $M \in \R^{n\times k}$, where one parametrizes $M$ using a deep neural network, whose coefficients are minimized using a gradient descent method.

{Albeit our method can be used in any context where such a problem occurs, our primary application of interest is rooted in Finance.}
When studying the correlation matrix between the returns of financial assets, it is important for the design of a well-diversified portfolio to identify groups of heavily correlated assets, or more generally, to identify a few ad-hoc features, or economic factors, that describe some dependencies between these assets. To this effect, the most natural tool is to determine the few first dominant eigenspaces of the correlation matrix and to interpret them as the main features driving the behavior of the portfolio. This procedure, generally termed Principal Component Analysis (PCA), is widely used. However, { PCA does not ensure any sparsity between the original matrix $\Sigma$ and its approximation $A$.
As it turns out, many coefficients of $\Sigma-A$ can be relatively large with respect to the others; these indicate pairs of assets that present an ignored large correlation between themselves, beyond the dominant features revealed by PCA.
Following \cite{lisa2001}, to reveal this extra structure present in $\Sigma$, we decompose it into the sum of a low-rank matrix $L$, which describes the dominant economic factors of a portfolio, plus a sparse matrix $S$, to identify hidden large correlations between assets pairs. The number of those economic factors is set according to the investor's views on the market, and coincides with the rank of $L$. The sparse part $S$ can be seen as a list of economic abnormalities, which can be exploited by the investor.}

{Beyond covariance matrices,} this decomposition is a procedure abundantly used in image and video processing  for compression and interpretative purposes  \cite{Bouwmans:2016:HRL:2994445}, but also in latent variable model selection \cite{candes2009}, in latent semantic indexing \cite{Kerui2010}, in graphical model selection \cite{Aybat2013AlternatingDM}, in graphs \cite{Kalofolias2015}, and in gene expression \cite{Liu2015}, among others. 
A rich collection of algorithms exist to compute such decomposition, see  \cite{Chandrasekaran2010LatentVG,  He2012,Lin2010, Wang2012, Zhou2011} to cite but a few, most of which are reviewed in \cite{bouwmans2015} and implemented in the Matlab LRS library \cite{lrslibrary2015}. {However, our method can only address the decomposition of a \emph{symmetric positive semidefinite} matrix, as it uses explicitly and takes full advantage of this very particular structure. }

{The Principal Component Pursuit (PCP) reformulation of this problem has been proposed and studied by \cite{candes2009,Chandrasekaran2010LatentVG,Lin2010} as a robust alternative to PCA, and generated a number of efficient algorithms. For a given $\delta>0$, the PCP problem is formulated as 
\begin{equation}
\min_{L,S} ||L||_{*}+ \delta ||S||_1 \quad \st \Sigma = L + S\,.
\label{min_pb}
\end{equation}
where $\Sigma \in \mathbb{R}^{n\times n}$ is the observed matrix, $||L||_{*}$ is the nuclear
norm of matrix $L$ (i.e.\ the sum of all singular
values of $L$) and $||S||_1 $ is the $l^1$-norm of matrix $S$. 
To solve \eqref{min_pb}, an approach consists in constructing its Augmented Lagrange Multiplier (ALM) \cite{Hestenes}. By incorporating the constraints of \eqref{min_pb} into the objective multiplied by their Lagrange multiplier $Y\in\mathbb{R}^{n\times n}$, the ALM is
\begin{equation}
\min_{L,S,Y} ||L||_{*}+ \delta ||S||_1 + \langle Y,\Sigma - L - S\rangle + \frac{\mu}{2}||\Sigma - L - S||^2_{F}\,,
\label{ALM}
\end{equation}
which coincides with the original problem when $\mu\to\infty$.
We denote by $\mathcal{L}_{\mu}(L,S,Y)$ the above objective function.
In \cite{Lin2010}, it is solved with an alternating direction method:
\begin{align}
L_{t+1} &= \arg\min_{L} \mathcal{L}_{\mu_t}(L,S_t,Y_t)\label{IALM_L}\\
S_{t+1} &= \arg\min_{S} \mathcal{L}_{\mu_t}(L_{t+1},S,Y_t)\label{IALM_S}\\
Y_{t+1} &= Y_t + \mu_t(\Sigma - L_{t+1} - S_{t+1})\nonumber\\
\mu_{t+1} &=\rho\mu_t.\nonumber
\end{align}
for some $\rho>1$. The resulting method is called Inexact ALM (IALM); in the Exact ALM, $\mathcal{L}_{\mu}(L,S,Y_t)$ is minimized on $L$ and $S$ simultaneously, a considerably more difficult task.
The problem \eqref{IALM_S} can be solved explicitly at modest cost. In contrast, \eqref{IALM_L} requires an expensive Singular Value Decomposition (SVD). In \cite{NonConvex_RPCA}, the authors replace the nuclear norm in  
\eqref{ALM} by the non-convex function $||L||_\gamma:=\sum_i(1+\gamma)\sigma_i(L)/(\gamma+\sigma_i(L))$ that interpolates between $\rank(L)$ and $||X||_*$ as $\gamma$ goes from $0$ to $1$, in order to depart from the convex PCP approximation of the original problem. Then they apply the same alternating direction method to the resulting function. This method is referred to as Non-Convex RCPA or NC-RCPA.
In \cite{Rodriguez2013}, the authors rather solve a variant of \eqref{min_pb} by incorporating the constraint into the objective, removing the costly nuclear norm term, and imposing a rank constraint on $L$: 
\begin{equation}
\min_{L,S} \delta ||S||_1 + \frac{1}{2}||\Sigma - L - S||^2_{F}\quad \st \rank(L)=k.
\label{constraints_rank}
\end{equation}
We denote by $\widehat{\mathcal{L}}(L,S)$ the above objective function. Using also an alternating direction strategy, the authors have devised the Fast PCP (FPCP) method as
\begin{align}
L_{t+1} &= \arg\min_{L} \widehat{\mathcal{L}}(L,S_t) \ \ \st \rank(L)=k \label{L_k1}\\
S_{t+1} &= \arg\min_{S} \widehat{\mathcal{L}}(L_{t+1},S). \label{S_k1}
\end{align}
The problem \eqref{S_k1} is easy to solve as for \eqref{IALM_S}. In contrast to \eqref{IALM_L}, the sub-problem \eqref{L_k1} can be solved by applying a faster partial SVD to $\Sigma - S_t$, with the only necessity of computing the $k$ first singular values and their associated eigenvectors.  These authors have further improved their algorithm in \cite{Rodriguez2016} with an accelerated variant of the partial SVD algorithm. Their methods are considered as state-of-the-art in the field  \cite{bouwmans2015} and their solution is of comparable quality to the one of \eqref{min_pb}. An alternative approach, designated here as (RPCA-GD) to solve \eqref{constraints_rank} was proposed in \cite{Yi2016RPCA_GS}, where, as in our setting, the rank constraint is enforced by setting $L:=MM^T$ for $M\in\R^{n\times k}$. Then a projected gradient method is used to solve \eqref{L_k1} in $M$. In order to guarantee that $S$ has a prescribed sparsity, they use an ad-hoc projector on an appropriate space of sparse matrices.}

{The solution to the PCP problem \eqref{min_pb} depends on the hyperparameter $\delta$, from which we cannot infer the value of the rank $k$ of the resulting optimal $L$ with more accuracy than in the classical results of Cand\`es et al. in Theorem~1.1 of \cite{candes2009}. In view of the particular financial application we have in mind, we would prefer a method for which we can chose the desired $k$ in advance. In both the IALM and the Non-Convex RPCA methods, neither the rank of $L$ nor its expressivity --- that is, the portion of the spectrum of $\Sigma$ covered by the low-rank matrix --- can be chosen in advance. In RPCA-GD, the rank is chosen in advance, but the sparsity of $S$ is set in advance by inspection of the given matrix $\Sigma$, a limitation that we would particularly like to avoid in our application. From this point of view, FPCP seems the most appropriate method.} In our approach, we must first select a rank for $L$, based e.g.\ on a prior spectral decomposition of $\Sigma$ or based on exogenous considerations. We then apply a gradient descent method with a well-chosen loss function, using Tensorflow \cite{tensorflow2015-whitepaper} or Pytorch {\cite{paszke2017automatic}.
	
	In Section~\ref{sec:main}, we introduce the construction of our low-rank matrix $L=MM^T$, where $M$, {in contrast with the RPCA-GD method,} is parametrized by the coefficients of a
	multi-layered neural network. {A potential advantage of this parametrization has been pointed in \cite{lopez-paz2018easing}, albeit in a different context than ours.} We also describe the corresponding loss function that we seek to minimize. Moreover, we analyze the regularity properties of the objective function leading to an estimate of the convergence rate of a standard gradient descent method to a stationary point of the method; see Theorem~\ref{thm:convergence}.
	We show that the convergence rate of our algorithm  grows polynomially in the dimension of each layer.  In Section~\ref{sec:numerics}, we conduct a series of experiments first on artificially generated data, that is matrices $\Sigma$ with a given decomposition $L+S$, to assess the accuracy of our method and to compare it with the algorithms presented in this section.
	We apply also our algorithm to real data sets: the correlation matrix of the stocks from the S\&P500 index and an estimate of the correlation matrix between real estate market indices of 44 countries. We show that our method achieves a higher accuracy than the other algorithms.
	Moreover, by its construction of $L:=MM^T$, we can guarantee the positive semidefiniteness of $L$, although empirical covariance matrices tend to not satisfy this property. We refer to \cite{higham2016bounds} for a detailed discussion of this issue.
	{ By contrast, most algorithms do not ensure that $L$ is kept positive semidefinite, which forces them to correct their output at the expense of their accuracy.} 
	  The proof of Theorem~\ref{thm:convergence} is provided in Section~\ref{sec:proof}.
	%
	%
	\section{Neural network parametrized optimization and its convergence rate}\label{sec:main}
	
	Let $\mathbb{S}^{n}$ be the set of $n$-by-$n$ real symmetric matrices and $\mathbb{S}^{n}_+\subset\mathbb{S}^{n}$ be the cone of positive semidefinite matrices.
	We are given a matrix $\Sigma=\left[\Sigma_{i,j}\right]_{i,j}\in\mathbb{S}^{n}$ and a number $1\le k\le n$. Our task is to decompose $\Sigma$
	as the sum of $L=\left[L_{i,j}\right]_{i,j}\in\mathbb{S}^{n}_+$
	of rank at most equal $k$ and a sparse matrix $S=\left[S_{i,j}\right]_{i,j}\in\mathbb{R}^{n\times n}$ in some optimal way.
	Observe that the matrix $S$ is also a symmetric matrix. It is well-known
	that the matrix $L$ can be represented as $L=MM^{T}$, where $M=\left[M_{i,j}\right]_{i,j}\in\mathbb{R}^{n\times k}$;
	thus $\Sigma=MM^{T}+S$.
	
	For practical purposes, we shall represent every symmetric $n$-by-$n$
	matrix by a vector of dimension $r:=n(n+1)/2$; formally, we define
	the linear operator
\begingroup\makeatletter\def\f@size{7}\check@mathfonts
\def\maketag@@@#1{\hbox{\m@th\normalsize\normalfont#1}}
\begin{eqnarray}
	\label{eq:def-h}
		h :\mathbb{S}^{n}&\to&\mathbb{R}^{n(n+1)/2},\\
	\Sigma&\mapsto& h(\Sigma):=(\Sigma_{1,1},\dots,\Sigma_{1,n},\Sigma_{2,2},\dots,\Sigma_{2,n},\dots,\Sigma_{n,n})^{T}. \nonumber
	\end{eqnarray}
	\endgroup
	
	The operator $h$ is obviously invertible, and its inverse shall be denoted by $h^{-1}$. Similarly, every  
	vector of dimension  $nk$ shall be represented by a $n$-by-$k$
	matrix
	by
	the linear operator $g:\mathbb{R}^{nk} \to \mathbb{R}^{n\times k}$, which maps $v = (v_1,\ldots,v_{nk})^T$ to $g(v):=W\equiv[w_{i,j}]_{i,j}$ with $w_{i,j}=v_{(i-1)k+j}$, in a kind of row-after-row filling of $W$ by $v$. This operator has clearly an inverse $g^{-1}$.
	
	We construct a neural network with $n(n+1)/2$ inputs and $nk$
	outputs; these outputs are meant to represent the coefficients of
	the matrix $M$ with whom we shall construct the rank $k$ matrix
	$L$ in the decomposition of the input matrix~$\Sigma$. However,
	we do not use this neural network in its feed-forward mode as a heuristic
	to compute $M$ from an input $\Sigma$; we merely use the neural
	network framework as a way to parametrize a tentative solution $M$
	to our decomposition problem.
	
	We construct our neural
	network with $m$ layers of respectively $\ell_1,\ldots,\ell_m$ neurons, each with the same
	activation function $\sigma:\mathbb{R}\to[-1,1]$. We assume that the
	first and the second derivative of $\sigma$ are uniformly bounded
	from above by the constants $\sigma'_{\textrm{max}}>0$ and $\sigma''_{\textrm{max}}$,
	respectively. We let $\ell_0:=n(n+1)/2$ and $\ell_{m+1}:=nk$. In accordance with the standard architecture of fully connected multi-layered neural networks, for $0\leq u\leq m$ we let
	$A^{(u)}=\left[A^{(u)}_{i,j}\right]_{i,j}\in\mathbb{R}^{\ell_{u+1}\times \ell_{u}}$ be layer $u$'s weights,\\ $b^{(u)}=\left[b^{(u)}_{i}\right]_{i}\in\mathbb{R}^{\ell_{u+1}}$ be
	its bias, and for all $v\in\R^{\ell_{u}}$
	\begin{equation*}
	f_{u}^{A^{(u)},b^{(u)}}(v):=A^{(u)}v+b^{(u)}.
	\end{equation*}
	For each $1\leq i\leq m$ and each $v\in \mathbb{R}^{\ell_i}$, we write $\sigma^{(i)}(v):=\left(\sigma(v_{1}),\ldots,\sigma(v_{\ell_i})\right)^{T}$.
	We 
	denote  the parameters $\Theta:=(A^{(0)},b^{(0)},\dots,A^{(m)},b^{(m)})$ and
	define the $m$-layered neural network by the function $\mathcal{N}^{\Theta}_m$ from $\R^{\ell_0}$ to $\R^{\ell_{m+1}}$ for which $\mathcal{N}^{\Theta}_m(x)$ equals
	\begingroup\makeatletter\def\f@size{9}\check@mathfonts
	\def\maketag@@@#1{\hbox{\m@th\normalsize\normalfont#1}}
	\begin{equation*}
	\begin{split}
	f_{m}^{A^{(m)},b^{(m)}} \!\circ\! \sigma^{(m)} \circ f_{m-1}^{A^{(m-1)},b^{(m-1)}} \!\circ\! \dots \!\circ\! \sigma^{(1)}\!\circ\! f_{0}^{A^{(0)},b^{(0)}}(x).
	\end{split}
	\end{equation*}
	\endgroup
	We therefore have to specify $\mathbb{L}_m:=\sum_{u=0}^{m} \ell_u \ell_{u+1} + \ell_{u+1}$ many parameters  to
	describe the neural network $\mathcal{N}^{\Theta}_m$ completely.
	
	Now, we are ready to define the cost function to minimize. We write the $1$-norm of some $X \in \R^{n\times n}$ as $||X||_{1}:=\sum_{i,j=1}^{n}|X_{i,j}|$.
	Our objective function is, for a given $\Sigma\in\mathbb{S}^{n}$, the function
	\begin{equation*}
	\varphi_{\mathrm{obj}}(\Theta):= \left\Vert g\big(\mathcal{N}^{\Theta}_m\left(h(\Sigma)\right)\!\big)\,g\big(\mathcal{N}^{\Theta}_m\left(h(\Sigma)\right)\!\big)^{T}-\Sigma\right\Vert _{1}.
	\end{equation*}
	Since $M=g\big(\mathcal{N}^{\Theta}_m\left(h(\Sigma)\right)\!\big)$ is
	our tentative solution to the matrix decomposition problem, this objective
	function consists in minimizing $||MM^{T}-\Sigma||_{1}=||S||_{1}$ over the parameters $\Theta$ that define $M$.
	
	As the function $\Theta \mapsto \varphi_{\mathrm{obj}}(\Theta)$  is neither differentiable nor convex, we do not have access neither to its gradient nor to a subgradient. We shall thus approximate it by
	\begingroup\makeatletter\def\f@size{7}\check@mathfonts
	\def\maketag@@@#1{\hbox{\m@th\normalsize\normalfont#1}}
	\begin{equation}
{\varphi}(\Theta) :=  \sum_{i,j=1}^{n}\mu\left(\left[g\big(\mathcal{N}^{\Theta}_m\left(h(\Sigma)\right)\!\big)\,g\big(\mathcal{N}^{\Theta}_m\left(h(\Sigma)\right)\!\big)^{T}-\Sigma\right]_{i,j}\right),\label{eq:phi}
	\end{equation}
	\endgroup
	where $\mu:\mathbb{R}\to[0,\infty)$ is a smooth approximation of
	the absolute value function with a derivative uniformly bounded by
	$1$ and its second derivative bounded by $\mu''_{\max}$. A widely
	used example of such a function is given by
	\[
	\mu(t):=
	\begin{cases}
	\frac{t^{2}}{2\varepsilon}+\frac{\varepsilon}{2} & \textrm{if }|t|\le\varepsilon\\
	|t| & \textrm{if }|t|>\varepsilon,
	\end{cases}
	\]
	where $\varepsilon$ is a small positive constant. With this choice
	for $\mu$, we have $\mu''_{\max}=1/\varepsilon$. Another example,
	coming from the theory of smoothing techniques in convex optimization \cite{nesterov2005smooth},
	is given by $\mu(t):=\varepsilon\ln(2\cosh(t/\varepsilon)),$ also
	with $\mu''_{\max}=1/\varepsilon$.

	We apply a gradient method to minimize the objective function ${\varphi}$, whose general scheme can be written as follows.	
	\begin{equation}
	\label{gradient-method}
	\begin{split}
	\begin{array}{l}
	\textrm{Fix}\;\Theta_{0}\\
	\textrm{For}\;j\ge0:\\
	\quad\quad\textrm{Compute }\nabla{\varphi}(\Theta_{j})\\
	\quad\quad\textrm{Determine a step-size } h_{j}>0\\
	\quad\quad\text{Set}\;\Theta_{j+1}=\Theta_{j}-h_{j}\nabla{\varphi}(\Theta_{j}).
	\end{array}
	\end{split}
	\end{equation}
	
	
	The norm we shall use in the sequel is a natural extension of the standard
	Frobenius norm to finite lists of matrices of diverse sizes (the Frobenius norm of a vector coinciding with its Euclidean norm). Specifically,
	for any $\gamma\in\mathbb{N}_0$, $m_{1},\ldots,m_{\gamma},n_{1},\ldots,n_{\gamma}\in\mathbb{N}_{0}$,
	and $(X^{1},\ldots,X^{\gamma})\in\mathbb{R}^{m_{1}\times n_{1}}\times\dots\times\mathbb{R}^{m_{\gamma}\times n_{\gamma}}$,
	we let 
	\begin{equation}\label{eq:norm_definition}
	\left\Vert \left(X^{1},\dots,X^{\gamma}\right)\right\Vert :=\left(||X^{1}||_F^{2}+\dots+||X^{\gamma}||_F^{2}\right)^{1/2}.
	\end{equation}
	This norm is merely the standard Euclidean norm of the vector obtained
	by concatenating all the columns of $X^{1},\ldots,X^{\gamma}$. 
	
	Since the objective function in \eqref{eq:phi} is non-convex, this method can only realistically
	converge to one of its stationary point or to stop close enough from
	one, that is, at point $\Theta^{*}$ for which $||\nabla\varphi(\Theta^*)||$
	is smaller than a given tolerance. The complexity of many variants
	of this method can be established if the function ${\varphi}$
	has a Lipschitz continuous gradient (see \cite{ghadimi2016accelerated} and references therein).
	We have the following convergence result.
	\begin{thm}\label{thm:convergence}
		Let $\Sigma \in \mathbb S^n$ and assume that there exists $D>0$  
		such that the sequence $(\Theta_j)_{j\in \N_0}$ of parameters constructed in \eqref{gradient-method} satisfies
		\begin{equation}\label{ass-D}
		\sup_{j \in \N_0}\|\Theta_j\|\leq D.
		\end{equation}
		Then, the gradient of the function $\varphi$ defined in \eqref{eq:phi} is Lipschitz continuous on $\mathcal{D}:=\{\Theta \in \R^{\mathbb{L}_m}\colon \|\Theta \| \leq D\}$ with Lipschitz constant bound $L_m$ satisfying for $m\ge 1$:
\begingroup\makeatletter\def\f@size{7}\check@mathfonts	\def\maketag@@@#1{\hbox{\m@th\normalsize\normalfont#1}} 
	\begin{align*}
    L^2_m = \mathcal{C}_m\max\Big\{kn^3D^4\ell_{\max},&nD^{4m+2}\max\{\ell_{\max},||h(\Sigma)||^2)\}\times\\
    & \max\{nD^2||h(\Sigma)||^2,nD^2\ell_{\max}
    \Big\},
	\end{align*}
    \endgroup   
		where $\mathcal{C}_m$ is a constant that only depends polynomially on  $\sigma'_{\max},\sigma''_{\max},\mu''_{\max}$, with powers in $O(m)$ and $\ell_{\max}:=\max\{\ell_1,\ldots,\ell_m\}$.
		When $m=1$, we have an alternative Lipschitz constant bound, more favourable for large $D$:
	\begingroup\makeatletter\def\f@size{7}\check@mathfonts	\def\maketag@@@#1{\hbox{\m@th\normalsize\normalfont#1}}
	\begin{equation*}
	\begin{split}
	\hat L^2_{1}
	=
	\mathcal{C}_{1} n^2\ell_1
	\max\!\bigg\{& D^4 ||h(\Sigma)||^4, \ell_1 D^8||h(\Sigma)||^4,\\
	&\ell_1^2D^6||h(\Sigma)||^2,n D^2||h(\Sigma)||^2,nk\ell_1,\ell_1^3D^4\bigg\}.
	\end{split}
	\end{equation*}
	\endgroup		
		As a consequence, if for the gradient method \eqref{gradient-method}  there exists a constant $K>0$ such that for all $j\geq 0$ 
		\begin{equation}\label{condition-gradient}
		\varphi(\Theta_j)-\varphi(\Theta_{j+1})\geq \tfrac{K}{L_m} \| \nabla \varphi (\Theta_j)\|^2,
		\end{equation}
		then for every $N \in \N$ we have that
		\begin{equation}\label{eq:thm-conv}
		\min_{0\leq j \leq N} \| \nabla \varphi (\Theta_j)\| \leq \tfrac{1}{\sqrt{N+1}}\Big[\tfrac{L_m}{K} \big(\varphi(\Theta_0)-\varphi^*\big)\Big]^{1/2}, 
		\end{equation}
		where $\varphi^*:=\min_{\Theta \in  \mathcal{D}} \varphi(\Theta)$.
		In particular, for every tolerance level $\varepsilon>0$ we have
		\begin{equation*}
		N+1 \geq \tfrac{L_m}{K\varepsilon^2} \big(\varphi(\Theta_0)-\varphi^*\big) \ \ \Longrightarrow \ \ \min_{0\leq j \leq N} \| \nabla \varphi (\Theta_j)\| \leq \varepsilon.
		\end{equation*}
	\end{thm}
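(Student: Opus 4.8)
\emph{Plan of proof.} The idea is to prove the Lipschitz estimate by bounding the Hessian $\nabla^2\varphi$ uniformly on the convex compact set $\mathcal{D}$, and to get that bound by propagating polynomial factors through the chain rule applied to a natural factorization of $\varphi$; the convergence statement \eqref{eq:thm-conv} is then a textbook telescoping argument. (Since replacing $D$ by $\max\{D,1\}$ preserves all hypotheses and only enlarges $\mathcal{D}$, I may and do assume $D\ge 1$.)

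\emph{Step 1: reduction and factorization.} As $\mu$ and $\sigma$ are $C^2$ and $h,g$ are linear, $\varphi$ is $C^2$ on all of $\R^{\mathbb{L}_m}$; since $\mathcal{D}$ is convex, the mean value inequality reduces the claim to $\sup_{\Theta\in\mathcal{D}}\|\nabla^2\varphi(\Theta)\|\le L_m$, the operator norm being the one induced by \eqref{eq:norm_definition}. I would then write $\varphi=\Psi\circ\Phi$, where $\Phi(\Theta):=\mathcal{N}^{\Theta}_m(h(\Sigma))\in\R^{nk}$ is the network output as a function of the \emph{parameters} (with the input $\Sigma$ frozen) and $\Psi(v):=\sum_{i,j=1}^{n}\mu\big([g(v)g(v)^{T}-\Sigma]_{i,j}\big)$. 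The chain rule gives $\nabla\varphi(\Theta)=D\Phi(\Theta)^{*}\nabla\Psi(\Phi(\Theta))$ and
\[
\nabla^2\varphi(\Theta)=D\Phi(\Theta)^{*}\,\nabla^2\Psi(\Phi(\Theta))\,D\Phi(\Theta)+\sum_{q=1}^{nk}\big(\nabla\Psi(\Phi(\Theta))\big)_{q}\,\nabla^2\Phi_{q}(\Theta),
\]
so everything reduces to uniform bounds on $\mathcal{D}$ for five quantities: $\|\Phi\|$, $\|D\Phi\|$, $\big(\sum_{q}\|\nabla^2\Phi_q\|^2\big)^{1/2}$, and $\|\nabla\Psi\|,\|\nabla^2\Psi\|$ on the (bounded) image $\Phi(\mathcal{D})$.

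\emph{Step 2: the two pieces.} For $\Psi$ this is routine: $g$ is an isometry ($\|g(v)\|_F=\|v\|$) and $v\mapsto g(v)g(v)^{T}$ is quadratic with easily controlled first and second differentials, so $|\mu'|\le1$, $|\mu''|\le\mu''_{\max}$ and Cauchy--Schwarz yield $\|\nabla\Psi(v)\|\lesssim n\|v\|$ and $\|\nabla^2\Psi(v)\|\lesssim n^2+\mu''_{\max}\|v\|^2$. For $\Phi$ I would run the forward and backward recursions of the network with $\|\sigma^{(i)}(\cdot)\|\le\sqrt{\ell_i}$ and $\|A^{(u)}\|,\|b^{(u)}\|\le D$. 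The forward pass gives $\|\Phi(\Theta)\|\lesssim D\sqrt{\ell_{\max}}$ when $m\ge1$ --- crucially, past the first nonlinearity the activations are bounded independently of $\|h(\Sigma)\|$, which is why $\|h(\Sigma)\|$ is absent here. The Jacobian is controlled by the backpropagation product $A^{(m)}\mathrm{diag}(\sigma'(\cdot))\cdots A^{(1)}\mathrm{diag}(\sigma'(\cdot))$ of norm $\le(D\sigma'_{\max})^m$, so $\|D\Phi(\Theta)\|^{2}$ is polynomial in $D$ (degree $\sim 2m$), $\ell_{\max}$ and $\|h(\Sigma)\|^{2}$, the factor $\|h(\Sigma)\|$ entering only through the derivative with respect to $A^{(0)}$. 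Differentiating this product recursion once more produces $O(m^{2})$ terms, each a partial product of weight matrices times one factor of $\sigma''_{\max}$ times an intermediate Jacobian; bounding these gives the corresponding estimate for $\big(\sum_q\|\nabla^2\Phi_q\|^2\big)^{1/2}$, and the output dimension $nk$ enters precisely when passing from the operator norm of $D^2\Phi$ to the square-summed form. When $m=1$ the map is simply $A^{(1)}\sigma^{(1)}(A^{(0)}h(\Sigma)+b^{(0)})+b^{(1)}$, and a direct estimate --- which does not route the $A^{(0)}$-derivative through the crude factor $D^{2m}$ --- produces the sharper constant $\hat L_1$, better when $\|h(\Sigma)\|$ is small relative to $D$.

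\emph{Step 3: assembly, convergence, and the hard part.} Substituting the Step-2 bounds into the Hessian identity and keeping $\|\Phi(\Theta)\|\lesssim D\sqrt{\ell_{\max}}$ on $\mathcal{D}$, the term $D\Phi^{*}\nabla^2\Psi\,D\Phi$ and the term $\sum_q(\nabla\Psi)_q\nabla^2\Phi_q$ each contribute a polynomial in $n,k,D,\ell_{\max},\|h(\Sigma)\|$; their maximum is (a bound of the form) $L_m$, respectively $\hat L_1$, and the constants coming from $\sigma'_{\max},\sigma''_{\max},\mu''_{\max}$ enter with powers $O(m)$, giving $\mathcal{C}_m$. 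Finally, with $\nabla\varphi$ being $L_m$-Lipschitz on $\mathcal{D}$, with $(\Theta_j)_{j}\subset\mathcal{D}$ by \eqref{ass-D}, and with the sufficient-decrease inequality \eqref{condition-gradient}, summing over $j=0,\dots,N$ telescopes,
\[
\tfrac{K}{L_m}\sum_{j=0}^{N}\|\nabla\varphi(\Theta_j)\|^{2}\le\sum_{j=0}^{N}\big(\varphi(\Theta_j)-\varphi(\Theta_{j+1})\big)=\varphi(\Theta_0)-\varphi(\Theta_{N+1})\le\varphi(\Theta_0)-\varphi^{*},
\]
hence $(N+1)\min_{0\le j\le N}\|\nabla\varphi(\Theta_j)\|^{2}\le\tfrac{L_m}{K}(\varphi(\Theta_0)-\varphi^{*})$, which is \eqref{eq:thm-conv}, and the complexity bound follows at once (cf.\ \cite{ghadimi2016accelerated}). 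I expect the genuine difficulty to be concentrated in the second-derivative estimate for a depth-$m$ network in Step 2 (and the $m=1$ refinement): differentiating the backpropagation recursion, organizing the $O(m^2)$ resulting terms, and bookkeeping exactly how the powers of $D$, the widths $\ell_u$, the quantity $\|h(\Sigma)\|$, and the output size $nk$ propagate. The reduction, the $\Psi$-estimates, and the telescoping are comparatively mechanical.
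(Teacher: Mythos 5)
Your route is sound but genuinely different in framing from the paper's. You reduce everything to a uniform bound on the Hessian $\nabla^2\varphi$ over the convex ball $\mathcal{D}$ (via the mean value inequality) and organize that bound through the factorization $\varphi=\Psi\circ\Phi$ and the second-order chain rule. The paper never forms a Hessian: it computes the partial derivatives of $\varphi$ explicitly (Lemmas \ref{lem:der-b} and \ref{lem:der-d}), proves an elementary lemma on Lipschitz constants of products and compositions (Lemma \ref{lem:Gen-Lip}), and then estimates $\|\nabla\varphi(\Theta)-\nabla\varphi(\bar\Theta)\|$ directly, first for a single layer (Lemmas \ref{lem:Lip-b}--\ref{lem:L-smooth}) and then for depth $m$ by iterating the same lemma along the backpropagation product \eqref{eq:big product}. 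Your splitting of $\Psi$ from $\Phi$ is cleaner and more modular --- it isolates the loss-specific estimates ($\|\nabla\Psi\|\lesssim n\|v\|$, $\|\nabla^2\Psi\|$ controlled by $\mu''_{\max}$) from the network-specific ones --- whereas the paper's approach avoids any regularity assumption beyond bounded first and second derivatives of $\sigma$ and $\mu$ and keeps explicit track of which parameter blocks each difference couples to (which is how the exact powers of $\ell_{\max}$, $\|h(\Sigma)\|$ and $D$ in $L_m$ and $\hat L_1$ arise). The two routes meet at the same technical core: differentiating, respectively Lipschitz-estimating, the product $A^{(m)}\mathrm{diag}(\sigma'(\cdot))\cdots$, with the forward-pass bounds $\|\sigma^{(u)}(\cdot)\|\le\sqrt{\ell_u}$ and the observation that $\|h(\Sigma)\|$ enters only through layer $0$. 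Your observations on this (boundedness of post-activation layers, $O(m^2)$ terms, where $nk$ enters) are all correct, and your telescoping argument for \eqref{eq:thm-conv} is exactly the paper's. The one caveat: all of the content of the specific formulas for $L_m^2$ and $\hat L_1^2$ lives in the bookkeeping you defer to ``Step 2/Step 3''; your sketch establishes that the Lipschitz bound is polynomial of the right general shape, but does not verify the stated exponents, so to count as a full proof of the theorem as stated you would still have to execute the combinatorics you correctly identify as the hard part.
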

 	\begin{rem}[Choosing $(h_j)$]\label{ex:gradient-method}
		Notice that the condition \eqref{condition-gradient} in Theorem~\ref{thm:convergence} imposed on the gradient method, or more precisely on the step-size strategy $(h_j)$,  is not very restrictive. We provide several examples that are frequently used.
		\begin{enumerate}
			\item The sequence $(h_j)$ is chosen in advance, independently of the minimization problem. This includes, e.g., the common  constant step-size strategy $h_j=h$ or $h_j=\tfrac{h}{\sqrt{j+1}}$  for some constant $h>0$. With these choices, one can show that \eqref{condition-gradient} is satisfied for $K=1$.
			%
			%
			%
			%
			\item The Goldstein-Armijo rule, in which, given  $0<\alpha<\beta<1$, one needs to find $(h_j)$ such that
			\begin{equation*}
			\begin{split}
			\alpha \big\langle \nabla \varphi(\Theta_j), \Theta_j-\Theta_{j+1} \big \rangle
			&\leq \varphi(\Theta_j)-\varphi(\Theta_{j+1})\\
			\beta \big\langle \nabla \varphi(\Theta_j), \Theta_j-\Theta_{j+1} \big \rangle
			&\geq \varphi(\Theta_j)-\varphi(\Theta_{j+1}).
			\end{split}
			\end{equation*}
			This strategy satisfies \eqref{condition-gradient}  with $K=2\alpha(1-\beta)$.
		\end{enumerate}
		We refer to \cite[Section~1.2.3]{nesterov2013introductory} and to \cite[Chapter 3]{Nocedal_Wright} for further details and more examples.
	\end{rem}
	\begin{rem}[On Assumption \eqref{ass-D}]
		The convergence rate \eqref{eq:thm-conv} obtained in Theorem~\ref{thm:convergence} relies fundamentally on the Lipschitz property of the gradient of the (approximated) objective function $\varphi$ of the algorithm in \eqref{gradient-method}. However, due to its structure, we see that the global Lipschitz property  of $\nabla \varphi$ fails already for a single-layered neural network, as it grows polynomially of degree~4 in the parameters; see also Section~\ref{sec:proof}. Yet, it is enough to ensure  the Lipschitz property of $\nabla \varphi$ on the domain of the sequence of parameters $(\Theta_j)_{j \in \N_0}$ generated by the algorithm in \eqref{gradient-method}, which explains the significance of assumption \eqref{ass-D}. Nevertheless, assumption \eqref{ass-D} is not very restrictive as one might expect that the algorithm  \eqref{gradient-method} converges and hence automatically forces assumption \eqref{ass-D} to hold true.
		Empirically, we verify in Subsection~\ref{subsec:bound} that this assumption holds for our algorithm when used with our two non-synthetic data sets.
	\end{rem}
	\begin{rem}[Polynomiality of our method]\label{rem:curse_dimensionality}
		While the second part of Theorem~\ref{thm:convergence} is standard in optimization (see, e.g., in \cite[Section~1.2.3]{nesterov2013introductory}), we notice that for a fixed depth $m$ of the neural network the constant $L$ in the rate of convergence of the sequence $(\min_{1\leq j \leq N} \| \nabla \varphi (\Theta_j)\|)$ grows polynomially in the parameters $\ell_{\max}:=\max\{\ell_1,\dots,\ell_{m}\}$, $n$, and $k$. These parameters describe the dimensions of the input, the output, and the hidden layers of the neural network. A rough estimate shows that
		\begin{equation*}
		L_m \leq O\left(k^{1/2}n^{3/2}D^{2m+2}\ell_{\max}||h(\Sigma)||^2\right).
		\end{equation*}
	\end{rem}
	\begin{rem}[A simplified version of \eqref{eq:thm-conv}]
		Note that, since the function $\varphi$ is nonnegative, we have $\varphi^*\geq 0$, so that \eqref{eq:thm-conv} can be simplified by
		\begin{equation*}
		\min_{0\leq j \leq N} \| \nabla \varphi (\Theta_j)\| \leq \tfrac{1}{\sqrt{N+1}}\left(\tfrac{L_m}{K} \varphi(\Theta_0)\right)^{1/2}\,.
		\end{equation*}
	\end{rem}
	%
\begin{rem}[Choice of activation function]
We require the activation function $\sigma\colon \R \to [-1,1]$ to be a non-constant, bounded, and smooth function. The following table provides the most common activation functions satisfying the above conditions.
\begin{table}[h!]
	\begin{center}
		\label{tab:activationFct}
		\begin{tabular}{c|c|c|c} 
			\textbf{Name} & \textbf{Definition} & $\boldsymbol{\sigma'_{\max}}$ & $\boldsymbol{\sigma''_{\max}}$
				 \\
			\hline 
		Logistic  & $\frac{1}{1+e^{-x}}$ & 0.25& $\tfrac{1}{6\sqrt{3}}$\\
			Hyperbolic tangent & $\frac{e^{2x}-1}{e^{2x}+1}$ & 1& $\tfrac{4}{3\sqrt{3}}$\\		(Scaled)	$\arctan$  & $\frac{2}{\pi}\tan^{-1}(x)$ & $\tfrac{2}{\pi}$& $\tfrac{3\sqrt{3}}{4\pi}$ \\
		\end{tabular}
 \caption{Choice of activation function $\sigma$.}
	\end{center}
\end{table}
\end{rem}	
	We provide the proof of Theorem~\ref{thm:convergence} in Section~\ref{sec:proof}.
%
%
\section{Numerical results}\label{sec:numerics}

\subsection{Numerical results based on simulated data}

We start our numerical tests\footnote{We refer to  \url{https://github.com/CalypsoHerrera/lrs-cov-nn} for the codes of our simulations.}
 with a series of experiments on artificially generated data.
 We construct a collection of $n$-by-$n$ positive semidefinite matrices $\Sigma$ that can be written as $\Sigma = L_0+S_0$ for a known matrix $L_0$ of rank $k_0\leq n$ and a known matrix $S_0$ of given sparsity $s_0$. {We understand by \emph{sparsity} the number of null elements of $S_0$ divided by the number of coefficients of $S_0$; when a sparse matrix is determined by an algorithm, we consider that every component smaller in absolute value than $\epsilon=0.01$ is null.}
To construct one matrix $L_0$, we first sample $nk_0$ independent standard normal random  variables that we arrange into an $n$-by-$k_0$ matrix $M$. Then $L_0$ is simply taken as $MM^T$. To construct a symmetric positive semidefinite sparse matrix $S_0$ 
 we first select uniformly randomly a pair $(i,j)$ with $1\leq i<j\leq n$. We then construct an $n$-by-$n$ matrix $A$ that has only four non-zero coefficients{: its} off-diagonal elements $(i,j)$ and $(j,i)$ are set to a number $b$ drawn uniformly randomly in $[-1,1]$, whereas the diagonal elements $(i,i)$ and $(j,j)$ are set to a number $a$ drawn uniformly randomly in $[|b|,1]$. This way, the matrix $A$ is positive semidefinite. The matrix $S_0$ is obtained by summing such matrices $A$, each corresponding to a different pair $(i,j)$, until the desired sparsity $s_0$ is reached.

Given an  artificially generated matrix $\Sigma = L_0+S_0$, where $L_0$ has a prescribed rank {$k_0$} and $S_0$ {a sparsity $s_0$},
we run our algorithm to construct a matrix $M\in\R^{n\times k}$. With $L:=MM^T$ and $S:=\Sigma-L$, we determine {the \emph{approximated rank}} $r(L)$ of $L$ by counting the number of eigenvalues of $L$ that are larger than $\epsilon = 0.01$. We also determine the sparsity $s(S)$ {as specified above, by taking as null every coefficient smaller than $\epsilon = 0.01$ in absolute value}.
 We compute the discrepancy between the calculated low-rank part $L$ and the correct one $L_0$ by $\textrm{rel.error}(L):=||L-L_0||_F/||L_0||_F$ and between $S$ and the true $S_0$ by $\textrm{rel.error}(S):=||S-S_0||_F/||S_0||_F$.
Table~\ref{table} reports the average of these quantities over ten runs of {our algorithm DNN (short for Deep Neural Network)}, as well as their standard deviation (in parenthesis). {We carried our experiments on  various values for the dimension $n$ of the matrix $\Sigma$, for the given rank $k_0$ of $L_0$, for the given sparsity $s_0$ of $S_0$ and for the chosen forced (upper bound for the) rank $k$ in the construction of $L$ introduced in Section~\ref{sec:main}. }

When choosing  $n=100$, our algorithm unsurprisingly achieves the maximal rank $k$ for the output matrix $L$, unlike IALM, Non-Convex RPCA, and FPCP.
The sparsities are comparable when $n=100$, even though the different methods have different strategies to sparsify their matrices; some methods, such as FPCP, apply a shrinkage after optimization by replacing every matrix entries $S_{ij}$ by $\sign(S_{ij})[|S_{ij}|-1/\sqrt{n}]^+$. By forcing sparsity, FPCP makes its output violate Equation \eqref{S_k1}, so the sparsity of its output and ours might not be comparable. We however display $s(S)$ and its relative error $\textrm{rel.error}(S)$ even for that method, to give some insight on how the algorithms behave.

When the given rank $k_0$ 
coincides the forced rank $k$, our algorithm achieves a much higher accuracy than
all the other algorithms for small $n$. When $n$ gets larger, our algorithm still compares favourably with the other ones, except on a few outlier instances. Of course, when there is a discrepancy between $k_0$ and $k$, our algorithm cannot recover $k_0$. Nevertheless, the relative error $\textrm{rel.error}(L)$ compares favorably with the other methods, especially when $k_0>k$. We acknowledge however that in circumstances where one needs to minimize the rank of $L$, e.g., to avoid overfitting past data, the forced rank $k$ can only be considered as the maximum rank of $L$ returned by the algorithm; in such case, the PCP, IALM, and FPCP algorithms could be more appropriate.

Various network architectures 
 and corresponding activation functions 
have been tested. They only marginally influence the numerical results.
%
%
\begin{table}[h]
	\begin{center}
		\resizebox{8.2cm}{!}{
		\begin{tabular}{| c c c c c | r r r r |}
			\toprule
			$n$ & $k_0$ & $k$ & $s_0$ & Algorithm &         $r(L)\quad\;\;$ &       $s(S)\quad\;\;$ & $\textrm{rel.error}(S)$ & $\textrm{rel.error}(L)$ \\
			\midrule
			\multirow{30}{*}{100} & \multirow{30}{*}{10} & \multirow{10}{*}{5} & \multirow{5}{*}{0.60} & IALM &   11.00 (0.45) &  0.17 (0.02) &  0.98 (0.00) &  0.93 (0.01) \\
			&    &    &      & NC-RCPA &   10.00 (0.00) &  1.00 (0.00) &  1.00 (0.00) &  0.95 (0.01) \\
			&    &    &      & RPCA-GD &    5.00 (0.00) &  0.01 (0.00) &  0.99 (0.00) &  0.94 (0.01) \\
			&    &    &      & FPCP &    5.00 (0.00) &  0.04 (0.00) &  1.01 (0.01) &  0.97 (0.01) \\
			&    &    &      & \cellcolor{light-gray}DNN & \cellcolor{light-gray}   5.00 (0.00) & \cellcolor{light-gray} 0.01 (0.00) & \cellcolor{light-gray} 0.36 (0.02) & \cellcolor{light-gray} 0.53 (0.02) \\
			\cline{4-9}
			&    &    & \multirow{5}{*}{0.95} & IALM &   11.30 (0.64) &  0.16 (0.01) &  2.21 (0.16) &  0.28 (0.02) \\
			&    &    &      & NC-RCPA &   10.00 (0.00) &  1.00 (0.00) &  1.00 (0.00) &  0.13 (0.00) \\
			&    &    &      & RPCA-GD &    5.00 (0.00) &  0.01 (0.00) &  4.18 (0.15) &  0.53 (0.02) \\
			&    &    &      & FPCP &    5.00 (0.00) &  0.04 (0.00) &  6.41 (0.21) &  0.84 (0.01) \\
			&    &    &      & \cellcolor{light-gray}DNN & \cellcolor{light-gray}   5.00 (0.00) & \cellcolor{light-gray} 0.01 (0.00) & \cellcolor{light-gray} 4.35 (0.18) & \cellcolor{light-gray} 0.54 (0.02) \\
			\cline{3-9}
			\cline{4-9}
			&    & \multirow{10}{*}{10} & \multirow{5}{*}{0.60} & IALM &   11.00 (0.45) &  0.17 (0.02) &  0.98 (0.00) &  0.93 (0.01) \\
			&    &    &      & NC-RCPA &   10.00 (0.00) &  1.00 (0.00) &  1.00 (0.00) &  0.95 (0.01) \\
			&    &    &      & RPCA-GD &   10.00 (0.00) &  0.03 (0.01) &  0.99 (0.00) &  0.94 (0.01) \\
			&    &    &      & FPCP &    9.00 (0.00) &  0.07 (0.01) &  0.99 (0.01) &  0.95 (0.01) \\
			&    &    &      & \cellcolor{light-gray}DNN & \cellcolor{light-gray}  10.00 (0.00) & \cellcolor{light-gray} 0.06 (0.02) & \cellcolor{light-gray} 0.03 (0.01) & \cellcolor{light-gray} 0.04 (0.01) \\
			\cline{4-9}
			&    &    & \multirow{5}{*}{0.95} &  IALM &   11.30 (0.64) &  0.16 (0.01) &  2.21 (0.16) &  0.28 (0.02) \\
			&    &    &      & NC-RCPA &   10.00 (0.00) &  1.00 (0.00) &  1.00 (0.00) &  0.13 (0.00) \\
			&    &    &      & RPCA-GD &   10.00 (0.00) &  0.09 (0.18) &  1.87 (0.38) &  0.24 (0.05) \\
			&    &    &      & FPCP &    9.00 (0.00) &  0.07 (0.00) &  5.64 (0.20) &  0.74 (0.01) \\
			&    &    &      & \cellcolor{light-gray}DNN & \cellcolor{light-gray}  10.00 (0.00) & \cellcolor{light-gray} 0.16 (0.05) & \cellcolor{light-gray} 0.14 (0.05) & \cellcolor{light-gray} 0.02 (0.01) \\
			\cline{3-9}
			\cline{4-9}
			&    & \multirow{10}{*}{20} & \multirow{5}{*}{0.60} & IALM &   11.00 (0.45) &  0.17 (0.02) &  0.98 (0.00) &  0.93 (0.01) \\
			&    &    &      & NC-RCPA &   10.00 (0.00) &  1.00 (0.00) &  1.00 (0.00) &  0.95 (0.01) \\
			&    &    &      & RPCA-GD &   20.00 (0.00) &  0.06 (0.01) &  0.97 (0.00) &  0.92 (0.01) \\
			&    &    &      & FPCP &   12.00 (0.00) &  0.19 (0.02) &  0.97 (0.00) &  0.93 (0.01) \\
			&    &    &      & \cellcolor{light-gray}DNN & \cellcolor{light-gray}  20.00 (0.00) & \cellcolor{light-gray} 0.06 (0.01) & \cellcolor{light-gray} 0.05 (0.01) & \cellcolor{light-gray} 0.08 (0.01) \\
			\cline{4-9}
			&    &    & \multirow{5}{*}{0.95} & IALM &   11.30 (0.64) &  0.16 (0.01) &  2.21 (0.16) &  0.28 (0.02) \\
			&    &    &      & NC-RCPA &   10.00 (0.00) &  1.00 (0.00) &  1.00 (0.00) &  0.13 (0.00) \\
			&    &    &      & RPCA-GD &   20.00 (0.00) &  0.06 (0.01) &  2.03 (0.18) &  0.26 (0.02) \\
			&    &    &      & FPCP &   12.00 (0.00) &  0.19 (0.01) &  3.99 (0.19) &  0.53 (0.01) \\
			&    &    &      & \cellcolor{light-gray}DNN & \cellcolor{light-gray}  20.00 (0.00) & \cellcolor{light-gray} 0.17 (0.04) & \cellcolor{light-gray} 0.32 (0.08) & \cellcolor{light-gray} 0.04 (0.01) \\
			\cline{1-9}
			\cline{2-9}
			\cline{3-9}
			\cline{4-9}
			\multirow{5}{*}{200} & \multirow{5}{*}{5} & \multirow{5}{*}{5} & \multirow{5}{*}{0.95} & IALM &    5.00 (0.00) &  1.00 (0.00) &  1.00 (0.00) &  0.27 (0.01) \\
			&    &    &      & NC-RCPA &    5.00 (0.00) &  1.00 (0.00) &  1.00 (0.00) &  0.27 (0.01) \\
			&    &    &      & RPCA-GD &    5.00 (0.00) &  0.99 (0.00) &  1.00 (0.00) &  0.27 (0.01) \\
			&    &    &      & FPCP &    5.00 (0.00) &  0.06 (0.00) &  2.90 (0.14) &  0.79 (0.01) \\
			&    &    &      & \cellcolor{light-gray}DNN & \cellcolor{light-gray}   5.00 (0.00) & \cellcolor{light-gray} 0.18 (0.02) & \cellcolor{light-gray} 0.08 (0.01) & \cellcolor{light-gray} 0.02 (0.00) \\
			\cline{1-9}
			\cline{2-9}
			\cline{3-9}
			\cline{4-9}
			\multirow{5}{*}{400} & \multirow{5}{*}{10} & \multirow{5}{*}{10} & \multirow{5}{*}{0.95} & IALM &   10.00 (0.00) &  1.00 (0.00) &  1.00 (0.00) &  0.27 (0.01) \\
			&    &    &      & NC-RCPA &   10.00 (0.00) &  1.00 (0.00) &  1.00 (0.00) &  0.27 (0.01) \\
			&    &    &      & RPCA-GD &   10.00 (0.00) &  0.99 (0.00) &  1.00 (0.00) &  0.27 (0.01) \\
			&    &    &      & FPCP &    7.50 (0.50) &  0.02 (0.00) &  3.13 (0.07) &  0.87 (0.00) \\
			&    &    &      & \cellcolor{light-gray}DNN & \cellcolor{light-gray}  10.00 (0.00) & \cellcolor{light-gray} 0.12 (0.05) & \cellcolor{light-gray} 0.08 (0.03) & \cellcolor{light-gray} 0.02 (0.01) \\
			\cline{1-9}
			\cline{2-9}
			\cline{3-9}
			\cline{4-9}
			\multirow{5}{*}{800} & \multirow{5}{*}{20} & \multirow{5}{*}{20} & \multirow{5}{*}{0.95} & IALM &   20.00 (0.00) &  1.00 (0.00) &  1.00 (0.00) &  0.28 (0.00) \\
			&    &    &      & NC-RCPA &   20.00 (0.00) &  1.00 (0.00) &  1.00 (0.00) &  0.28 (0.00) \\
			&    &    &      & RPCA-GD &   20.00 (0.00) &  1.00 (0.00) &  1.00 (0.00) &  0.28 (0.00) \\
			&    &    &      & FPCP &    9.00 (0.00) &  0.01 (0.00) &  3.30 (0.03) &  0.92 (0.00) \\
			&    &    &      & \cellcolor{light-gray}DNN & \cellcolor{light-gray}  18.10 (5.70) & \cellcolor{light-gray} 0.04 (0.02) & \cellcolor{light-gray} 1.16 (1.59) & \cellcolor{light-gray} 0.32 (0.44) \\
			\cline{1-9}
			\cline{2-9}
			\cline{3-9}
			\cline{4-9}
		\end{tabular}}
		
	\end{center}
	\cellcolor{light-gray}\caption{Comparison between our method (DNN) and several other algorithms: IALM \cite{Lin2010}, Non-convex RCPA (NC-RCPA) \cite{NonConvex_RPCA}, RPCA-GD \cite{Yi2016RPCA_GS}, and FPCP \cite{Rodriguez2013}. The input parameters are the dimension $n$ of the matrix $\Sigma$ 
		the given rank $k_0$  of $L_0$,  the given sparsity $s_0$ of $S_0$, and  the chosen forced rank $k$ of $L$. We report the estimated rank $r(L)$ of the output matrix $L$, the estimated sparsity $s(S)$ of the output $S$, and their respective relative errors.
	}
	\label{table}
\end{table}
%
%
\subsection{Application on a five hundred S\&P500 stocks portfolio}
In this section, we evaluate our algorithm on real market data and compare it the other algorithms we have selected to demonstrate its capability also when the low-rank plus sparse matrix decomposition is not known. {A natural candidate for our experiment is the correlation matrix of stocks in the S\&P500, due to its relatively large size and the abundant, easily available data.}
Five hundred S\&P500 stocks were part of the index between 2017 and 2018. To make the representation {more readable}, we have sorted these stocks in eleven sectors according to the global industry classification standard\footnote{First, we have those belonging to the energy sector, second, those from the materials sector, then, in order, those from industrials, real estate, consumer discretionary, consumer staples, health care, financials, information technology, communication services, and finally utilities. We may notice that utilities seem almost uncorrelated to the other sectors, and that real estate and health care present a significantly lower level of correlation to the other sectors than the rest.}. We have constructed the correlation matrix $\Sigma$ from the daily returns of these 500 stocks during 250 consecutive trading days (see Figure~\ref{sp500}). As the data used to construct $\Sigma$ are available at an identical frequency, the matrix $\Sigma$ is indeed positive semidefinite, with 146 eigenvalues larger than $10^{-10}$. {The 70 largest eigenvalues account for 90\% of $\Sigma$'s trace, that is, the sum of all its 500 eigenvalues.}


In Figure~\ref{sp500}{,} we display the resulting matrices $L$ and $S$ for
all algorithms 
with respect to the same  input {$\Sigma$}. In our method and in RPCA-GD, we have set the rank of $L$ to $k=3$. Coincidentally, we have also obtained a rank of $3$ with FPCP. Among the three output matrices $L$, the one returned by our method matches the input more closely as it contains more relevant eigenspaces. The other algorithms have transferred this information to the sparse matrix $S$. {Note that the scale of values for the correlation matrix ranges between $-0.2$ and $0.9$.}
The ranks of $L$ obtained with the two other algorithms are $61$ for IALM and $5$ for Non-Convex RPCA, showing the difficulty of tuning these methods to obtain some desired rank.
The matrix $S$ from DNN would normally be slightly less sparse than the matrix $S$ of FPCP, as FPCP applies shrinkage. However, for visualization and comparison purposes, shrinkage is also applied on $S$ returned by DNN algorithm in Figure~\ref{sp500}.
%
\begin{center}
	\begin{figure}[h!]
		\begin{minipage}[c]{0.33\linewidth}%
			\centering \includegraphics[width=1\linewidth]{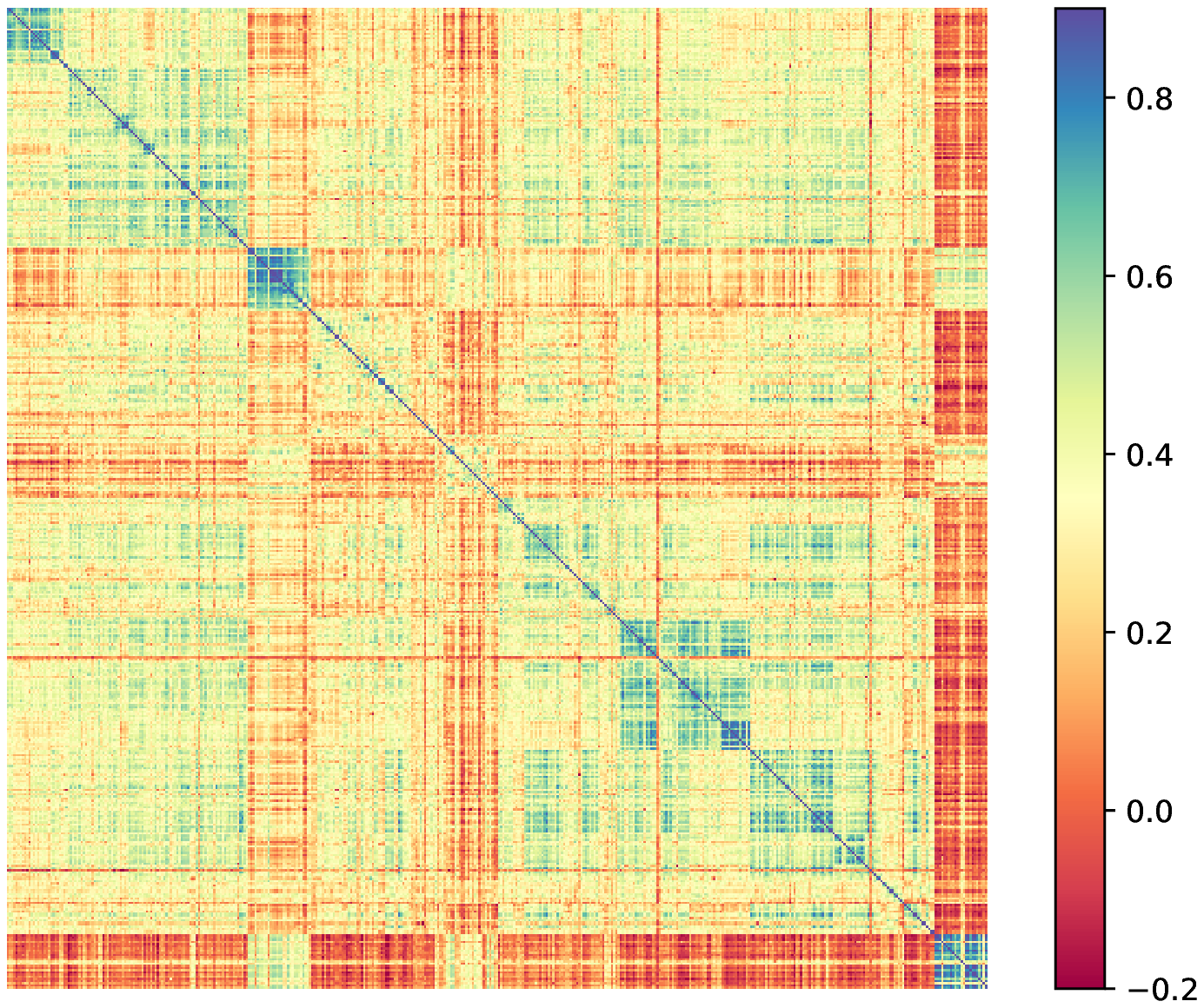}
			\vspace{-6mm}
			\caption*{{$\Sigma$}}
			\vspace{2mm}
			\includegraphics[width=1\linewidth]{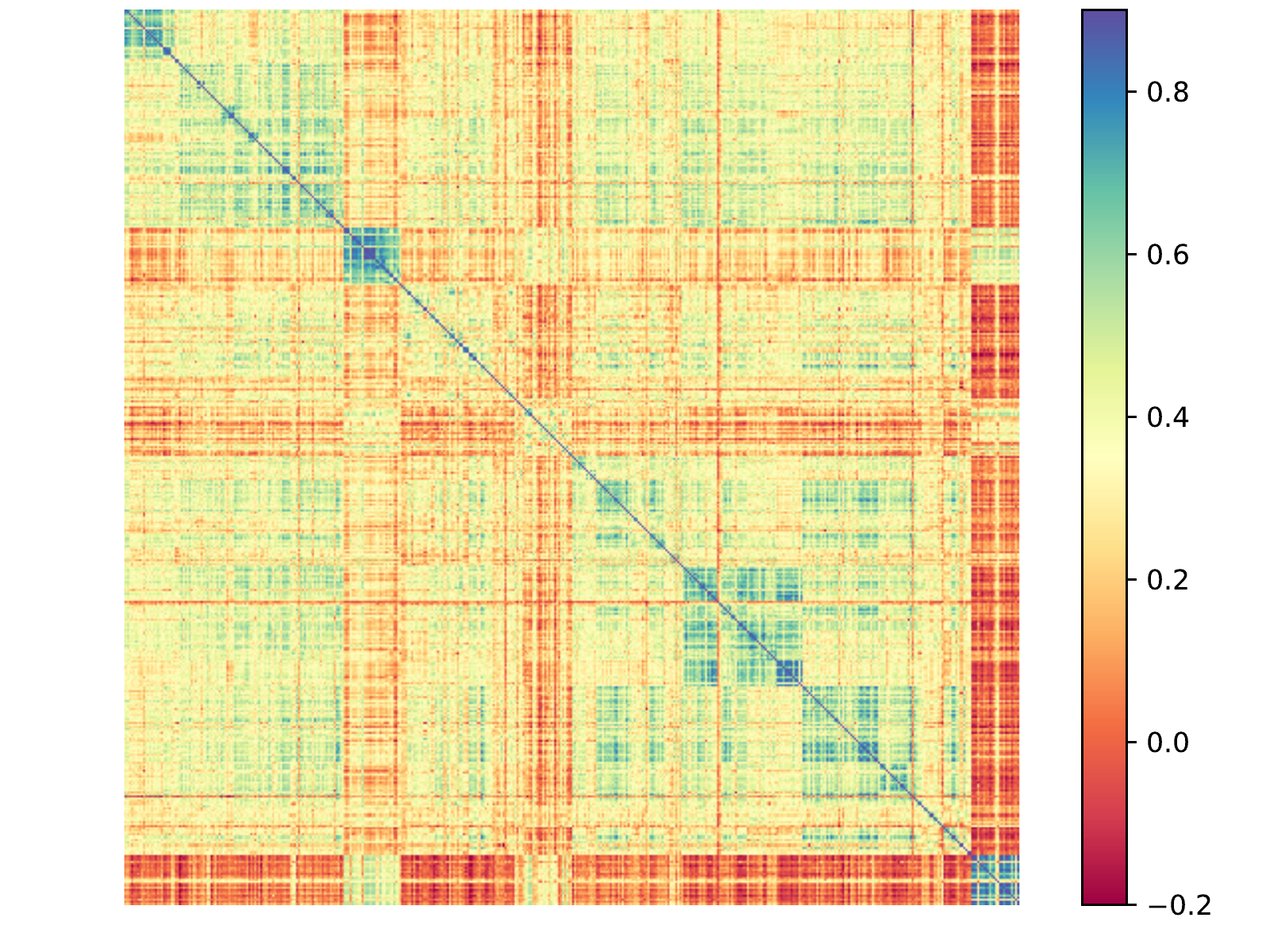} 
			\vspace{-6mm}
			\caption*{{$\Sigma$}}
			\vspace{2mm}
			\includegraphics[width=1\linewidth]{images/sp500/sp500_0} 
			\vspace{-6mm}
			\caption*{{$\Sigma$}}
			\vspace{2mm}
			\includegraphics[width=1\linewidth]{images/sp500/sp500_0} 
			\vspace{-6mm}
			\caption*{{$\Sigma$}}
			\vspace{2mm}
			\includegraphics[width=1\linewidth]{images/sp500/sp500_0} 
			\vspace{-6mm}
			\caption*{{$\Sigma$}}
			\vspace{2mm}
		\end{minipage}%
		\begin{minipage}[c]{0.33\linewidth}%
\centering \includegraphics[width=1\linewidth]{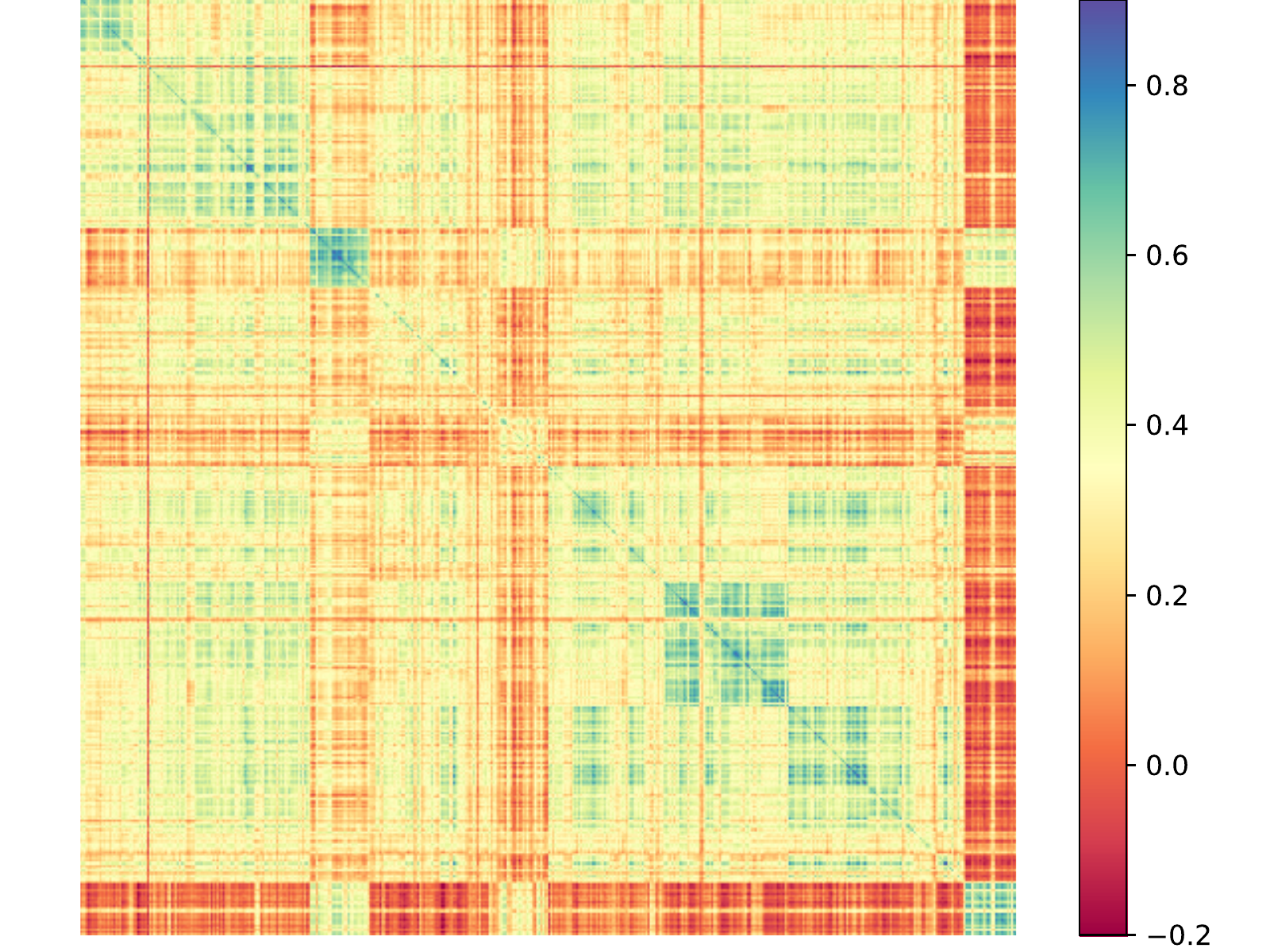}
\vspace{-6mm}
 \caption*{$L$ (IALM)}
\vspace{2mm}
 \includegraphics[width=1\linewidth]{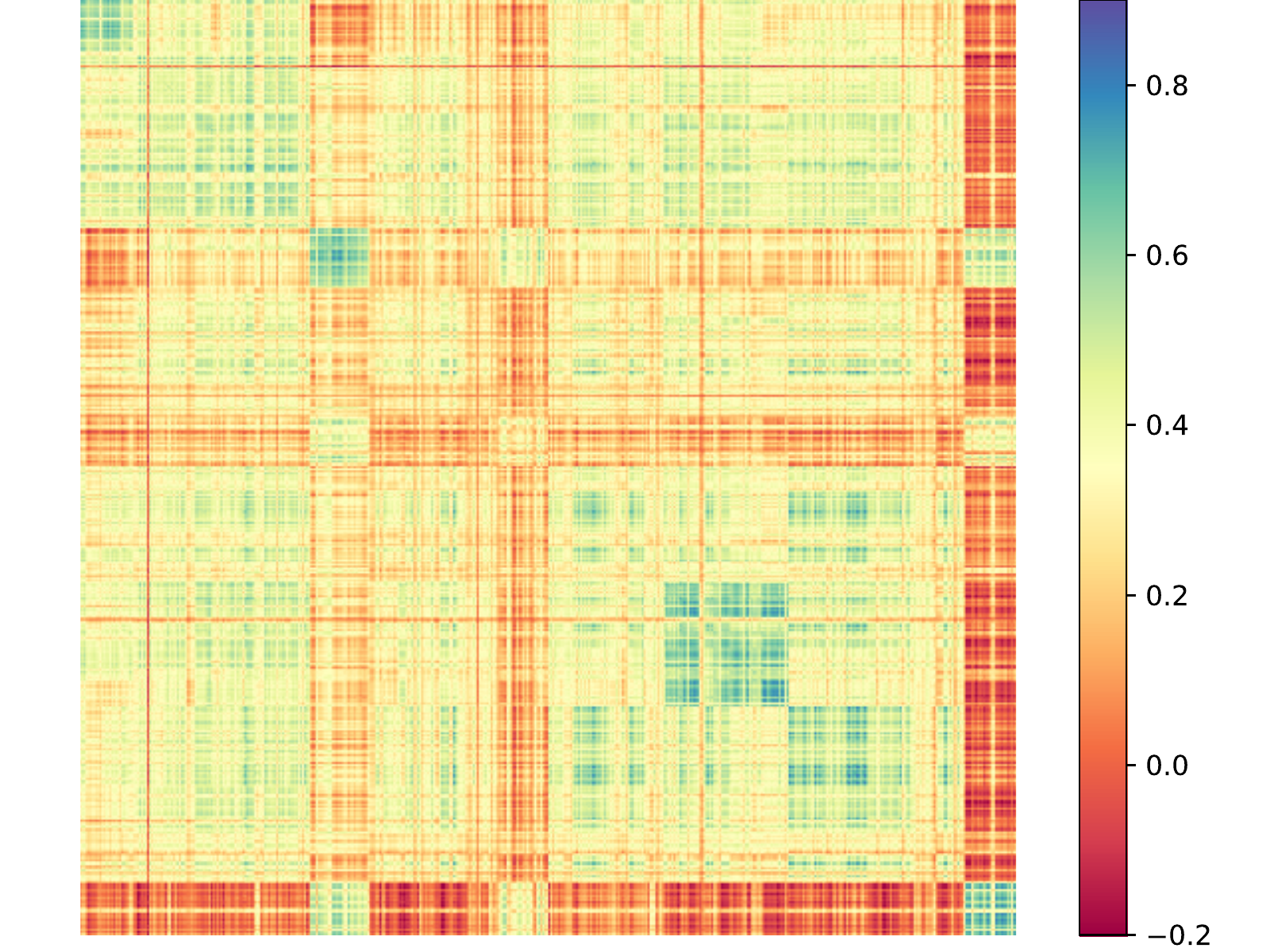} \vspace{-6mm}
 \caption*{$L$ (NC-RCPA)}
\vspace{2mm}
			\includegraphics[width=1\linewidth]{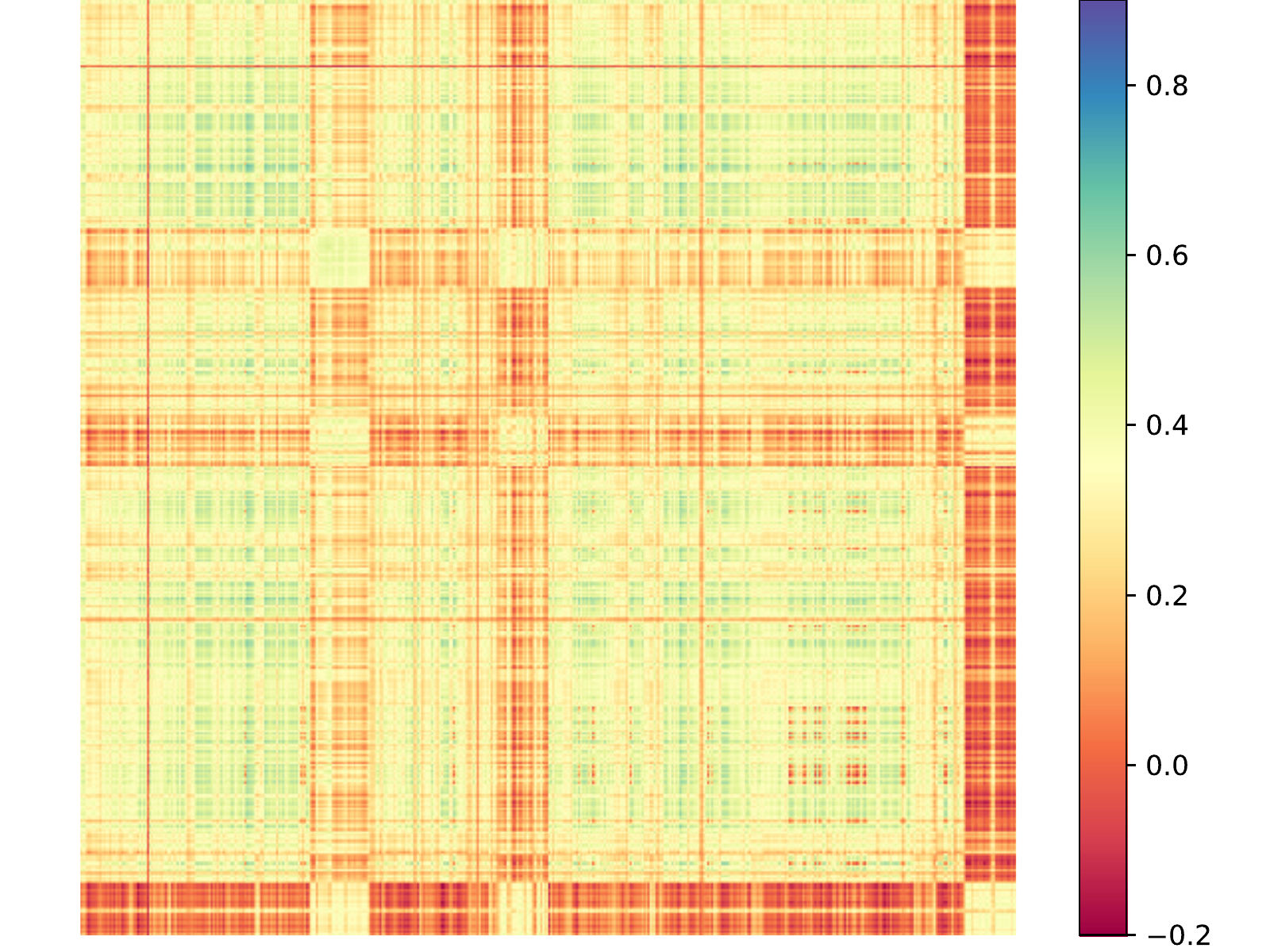} \vspace{-6mm}
			\caption*{$L$ (RPCA-GD)}
			\vspace{2mm}
			\includegraphics[width=1\linewidth]{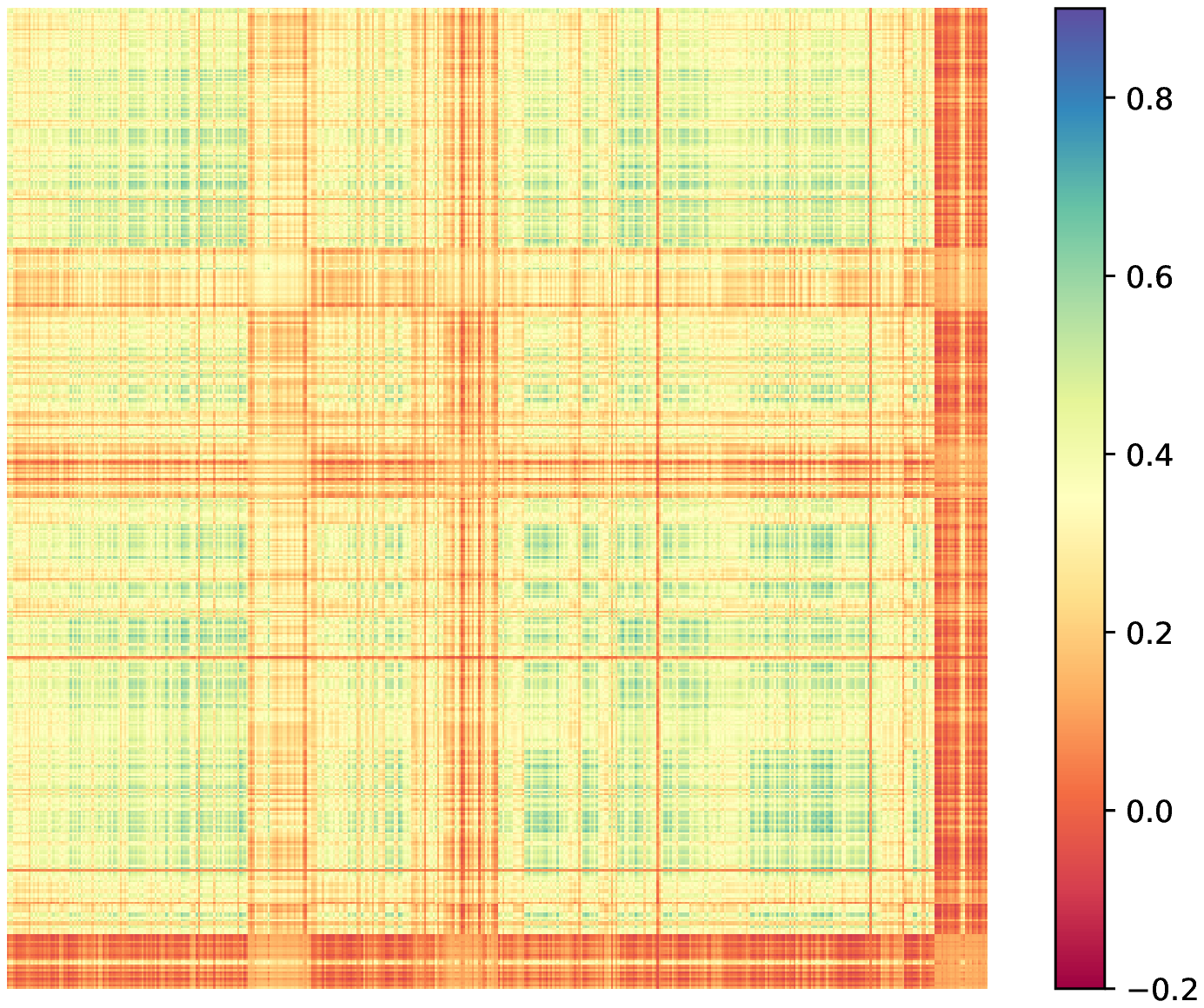} \vspace{-6mm}
			\caption*{$L$ (FPCP)}
			\vspace{2mm}
			\includegraphics[width=1\linewidth]{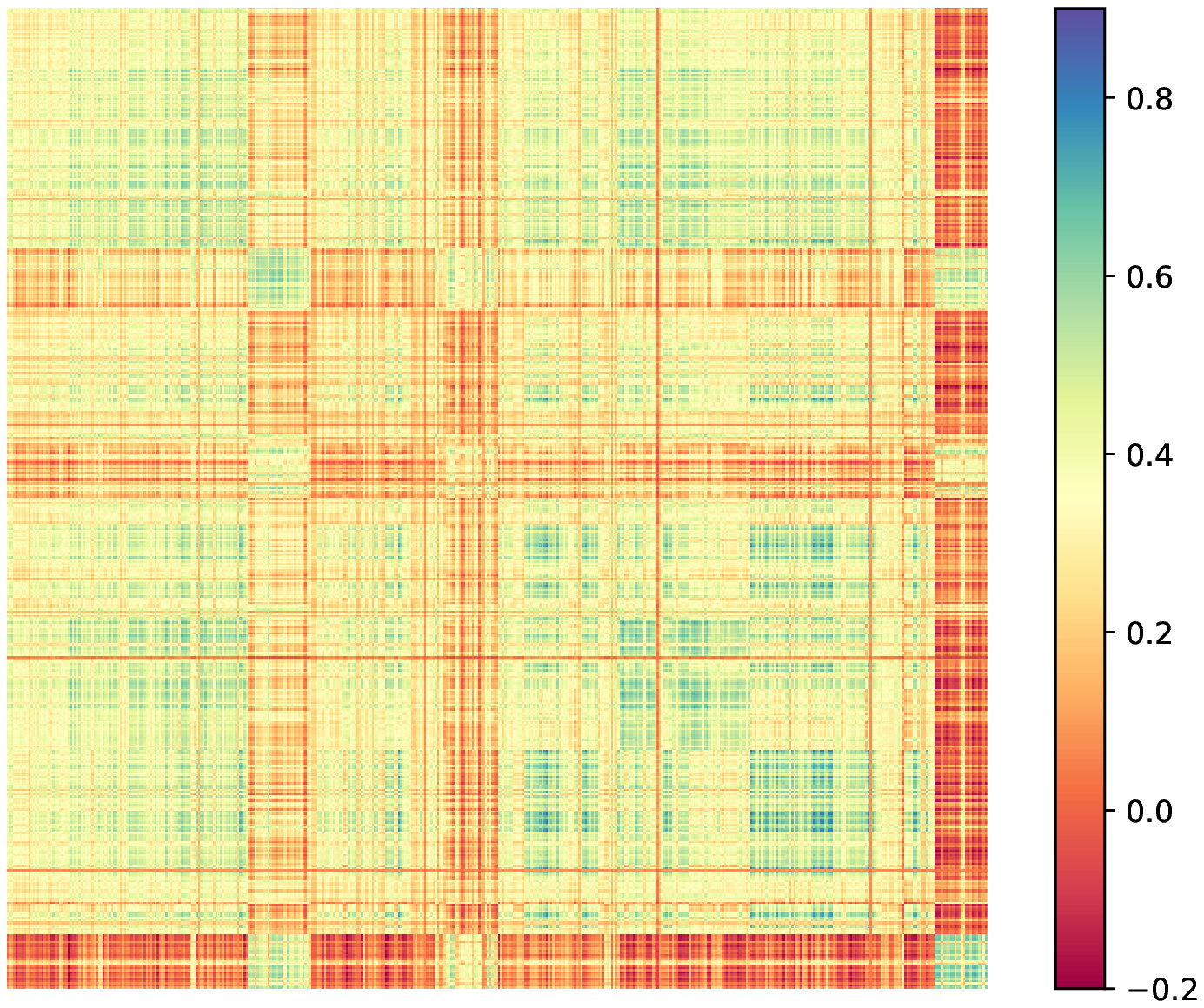} \vspace{-6mm}
			\caption*{$L$ (DNN)}
			\vspace{2mm}
		\end{minipage}%
		\begin{minipage}[c]{0.33\linewidth}%
			\centering
			\vspace{0.35cm} \includegraphics[width=1\linewidth]{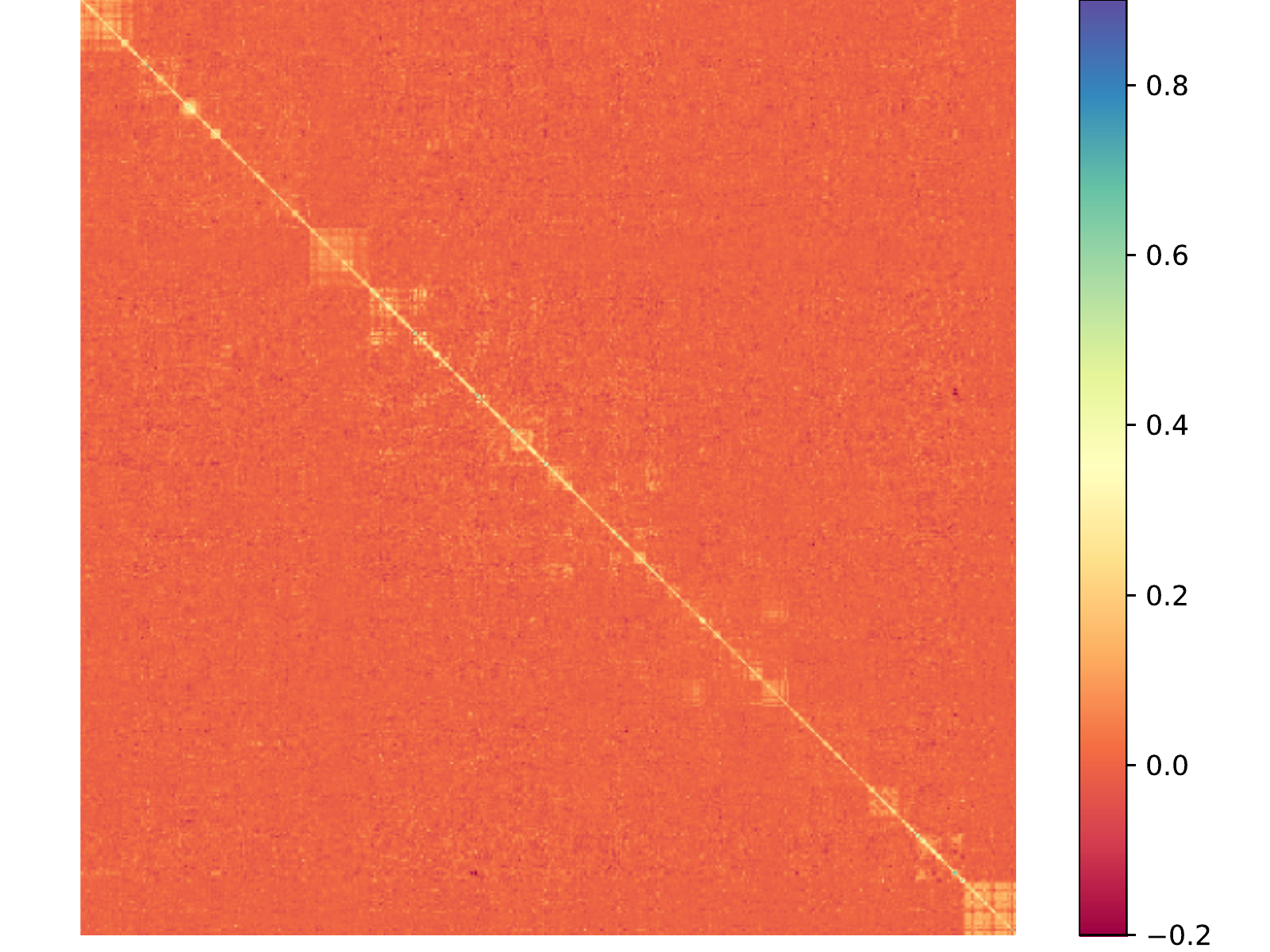}
			\vspace{-6mm}
			\caption*{$S$ (IALM)}
			\vspace{2mm}
			\includegraphics[width=1\linewidth]{images/sp500/sp500_0_S_ialm} \vspace{-6mm}
			\caption*{ {$S$ (NC-RCPA)}}
			\vspace{2mm}
			\includegraphics[width=1\linewidth]{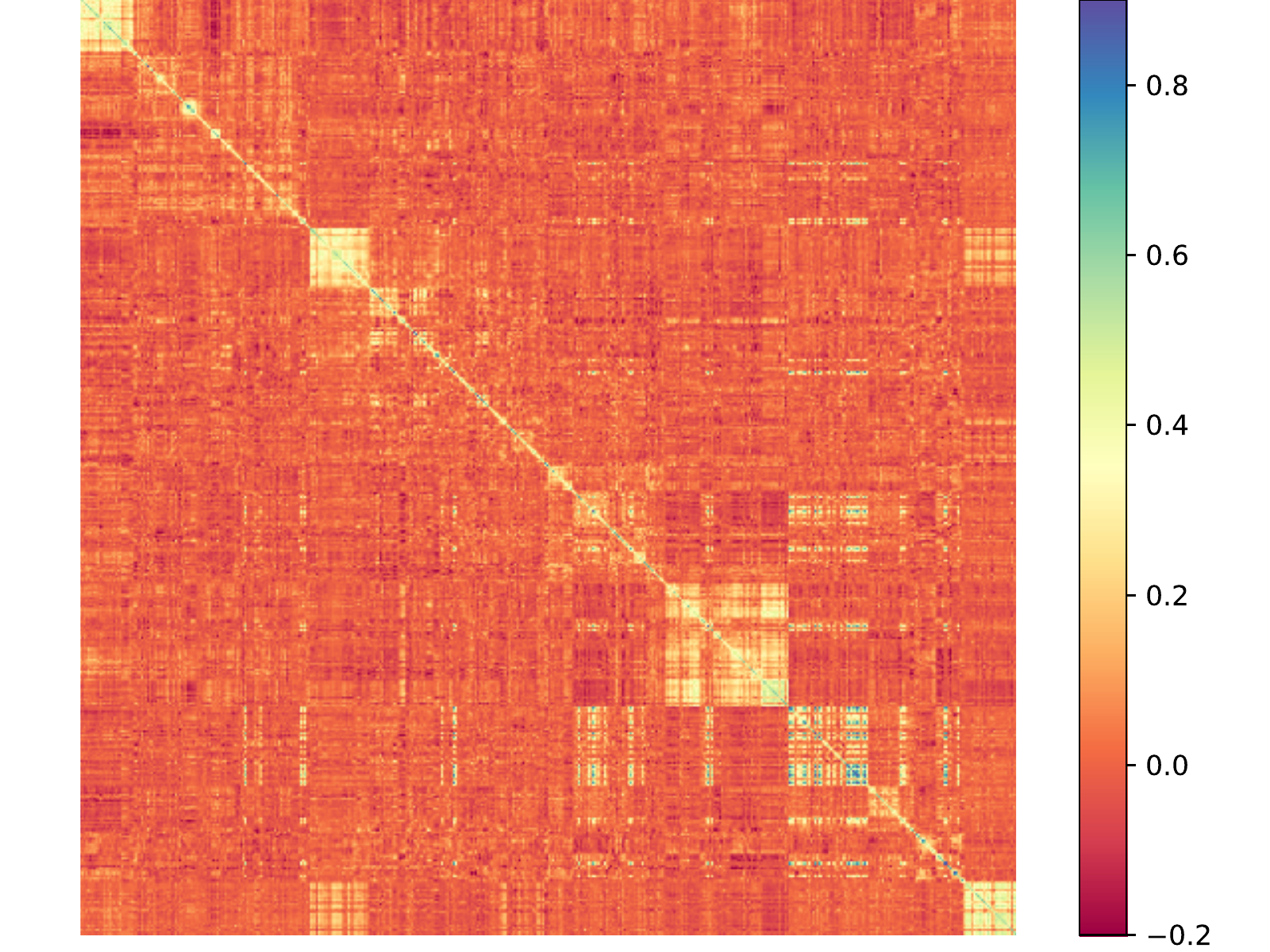} \vspace{-6mm}
			\caption*{ {$S$ (RPCA-GD)}}
			\vspace{2mm}
			\includegraphics[width=1\linewidth]{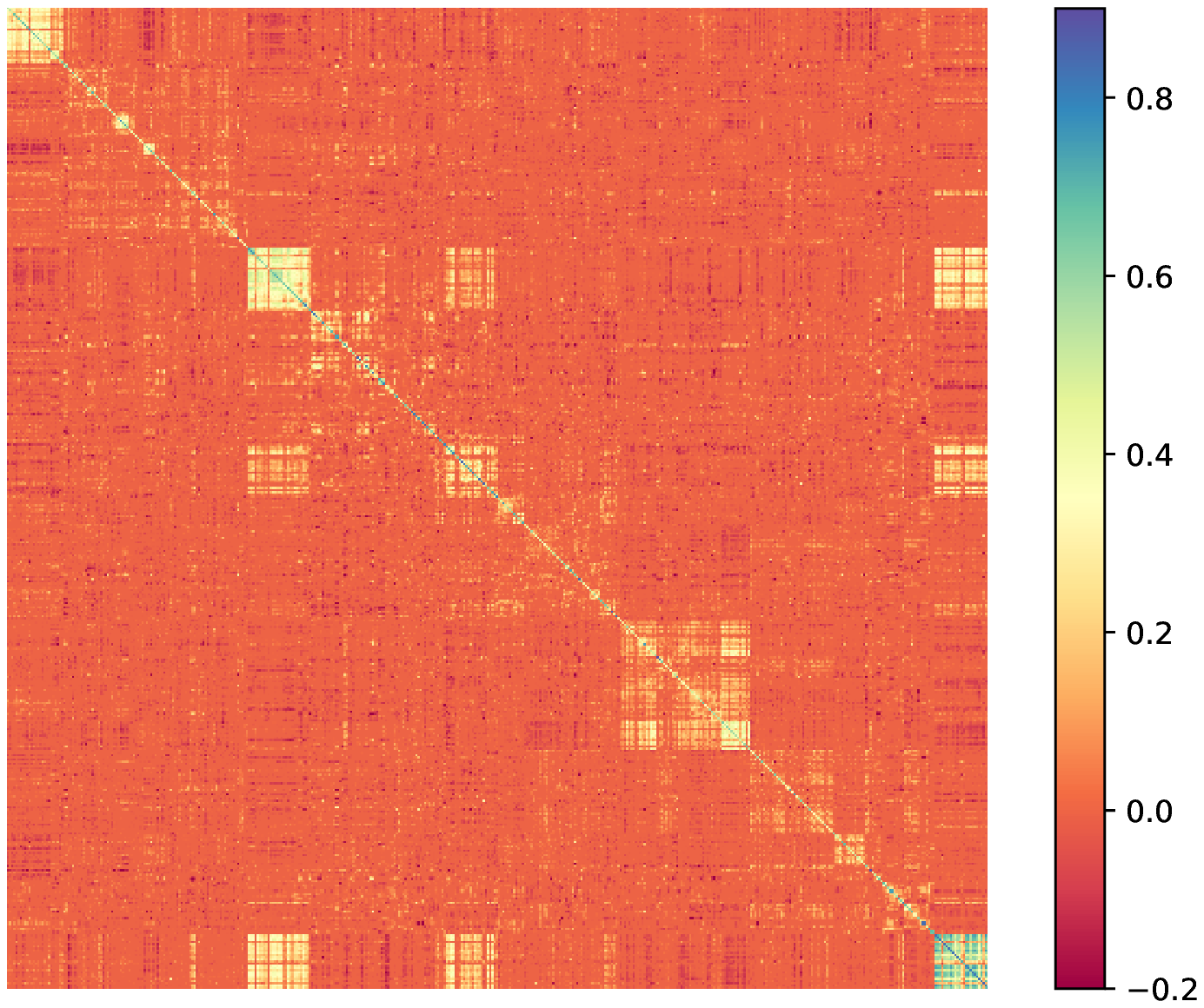} \vspace{-6mm}
			\caption*{ {$S$ (FPCP)}}
			\vspace{2mm}
			\includegraphics[width=1\linewidth]{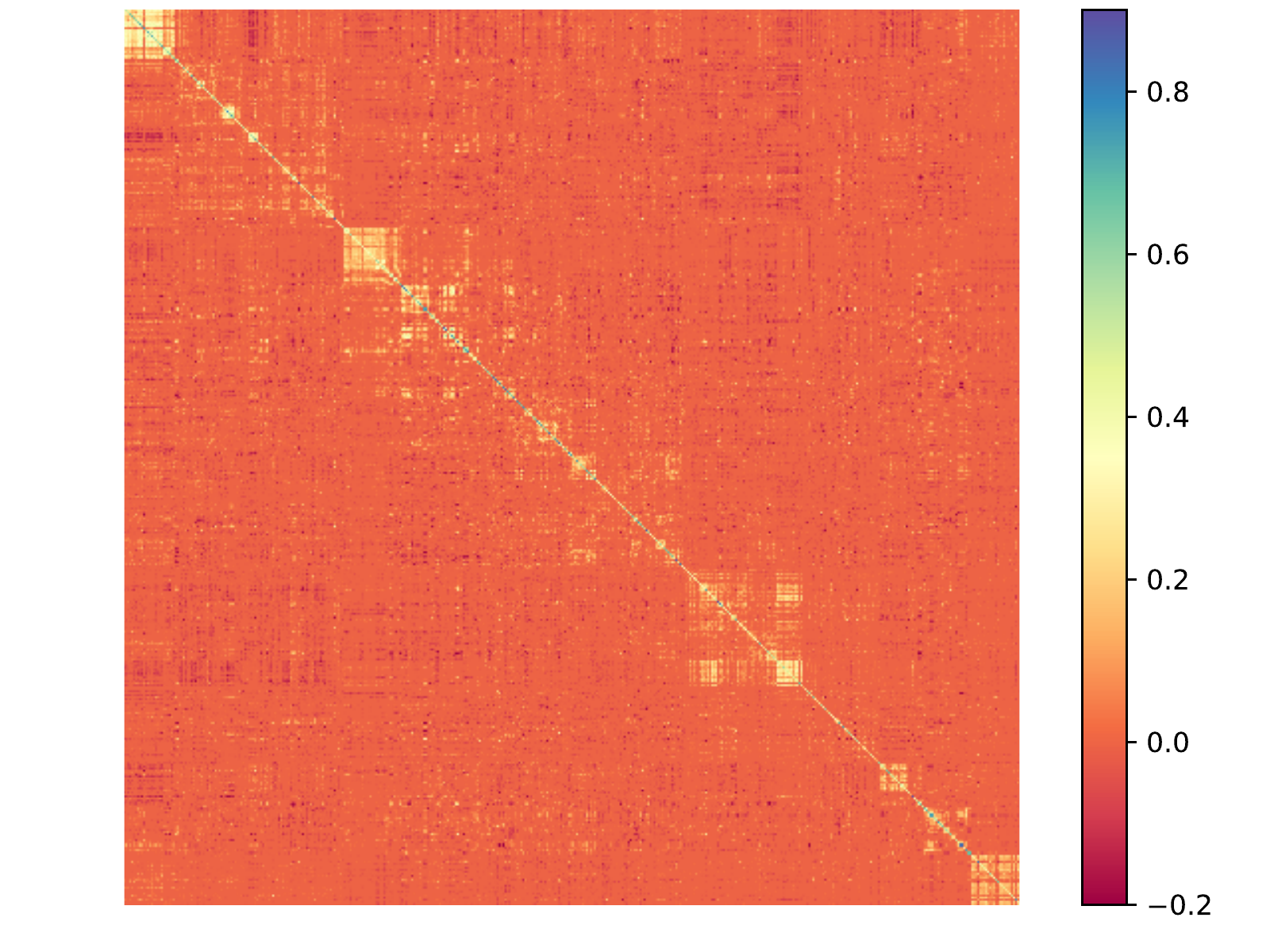} \vspace{-6mm}
			\caption*{ $\phantom{ii}$ $S$ (DNN),  $\phantom{ fd}$ \,  with shrinkage}
			\vspace{2mm}
		\end{minipage}
		\caption{Decomposition into a low-rank plus a sparse matrix of the correlation matrix of 500 stocks among the S\&P500 stocks. The forced rank is set to $k=3$. {For fair comparison with FPCP, shrinkage is applied on $S$ returned by the DNN algorithm. The obtained rank of the matrix $L$ are 61 for IALM, 5 for NC-RPCA, and 3 for the three other methods. We have $||\Sigma - L||_F/||L||_F$ at $0.16$ for DNN, and at $0.24$ for FPCP. The matrix $S$ has sparsity $0.69$ for DNN and $0.63$ for FPCP. The relative error of $L + S$ compared to $\Sigma$, where $S$ is subject to a shrinkage, is $0.076$ for DNN and $0.079$ for FPCP. When shrinkage is not applied, this error drops to $2.3 \cdot 10^{-8}$ for DNN.}}
		\label{sp500} 
	\end{figure}
\end{center}
%
\subsection{Application on real estate return}
We have computed the low-rank plus sparse decomposition of the real estate return matrix for $44$ countries\footnote{The countries are ordered by continent {and subcontinents}: {Western} Europe (Belgium, 
	Luxembourg, 
	Netherlands, 
	France, 
	Germany,  
	Switzerland, 
	Austria, 
	Denmark, 
	Norway, 
	Sweden, 
	Finland,
	United Kingdom, 
	Ireland, 
	Italy,
	Spain,
	Portugal), 
	{Eastern} Europe (Croatia, Estonia, 
	Latvia, 
	Lithuania, 
	Russia, 
	Poland, 
	Bulgaria, 
	Hungary, 
	Romania, 
	Slovak Republic, 
	Czech Republic), Near East (Turkey, Saudi Arabia), 
	{Southern} America (Brazil, Chile, 
	Colombia,
	Peru, 
	Mexico),
	{Northern} America (United States,
	Canada), {Eastern} Asia (India, China,
	Hong Kong, 
	Singapore,
	Japan)
	Oceania (Australia,
	New Zealand)
	and Africa (South Africa).}. The correlation matrix contains $88$ returns, alternating the residential returns and the corporate returns of each country\footnote{
	We thank Eric Schaanning for providing us this correlation matrix.}; see Figure~\ref{realestate}.
{We impose the rank of the output matrix to be equal to $3$.}
Similar to the previous section, the correlation color scale in Figure~\ref{realestate} is cropped between $-0.5$ and $0.5$ for a better visualization. The sparse matrix of FPCP is normally {sparser than the one returned by our DNN algorithm}. 
However in Figure~\ref{realestate}, and for a fair comparison, shrinkage is also applied to the matrix $S$ returned by our algorithm.
The low-rank matrices $L$ exhibit a variety of ranks from low (1 for NC-RPCA) to high (14 for IALM), indicating the difficulty of tuning the hyperparameters of these methods to a desired rank. 

In Figure~\ref{realestate_eigen}, we plot in the first line the eigenvalues of the matrix $L$ returned by FPCP and DNN, {as well as the eigenvalues of the original matrix $\Sigma$}, and for all the algorithms simultaneously in the second line. In the left figure, we {display} the first $17$ eigenvalues of the matrix $L$ where the forced rank is set to $k=15$. In the right figure, we plot the first $50$ eigenvalues where the forced rank is set to $k=88$. Notice that the input matrix $\Sigma$ has {some} \emph{negative} eigenvalues. {This phenomenon can happen in empirical correlation matrices when the data of the different variables are either not sampled over the same time frame or not with the same frequency}; we refer to \cite{higham2016bounds} for a further discussion on {this issue}. Our DNN algorithm and RPCA-GD, by setting $L:=MM^T$, avoid negative eigenvalues, although the original matrix $\Sigma$ is not {positive semidefinite}. {In contrast, the other algorithms might output a non-positive semidefinite matrix.}
\begin{center}
	\begin{figure}[h!]
		\begin{minipage}[c]{0.33\linewidth}%
			\centering \includegraphics[width=1\linewidth]{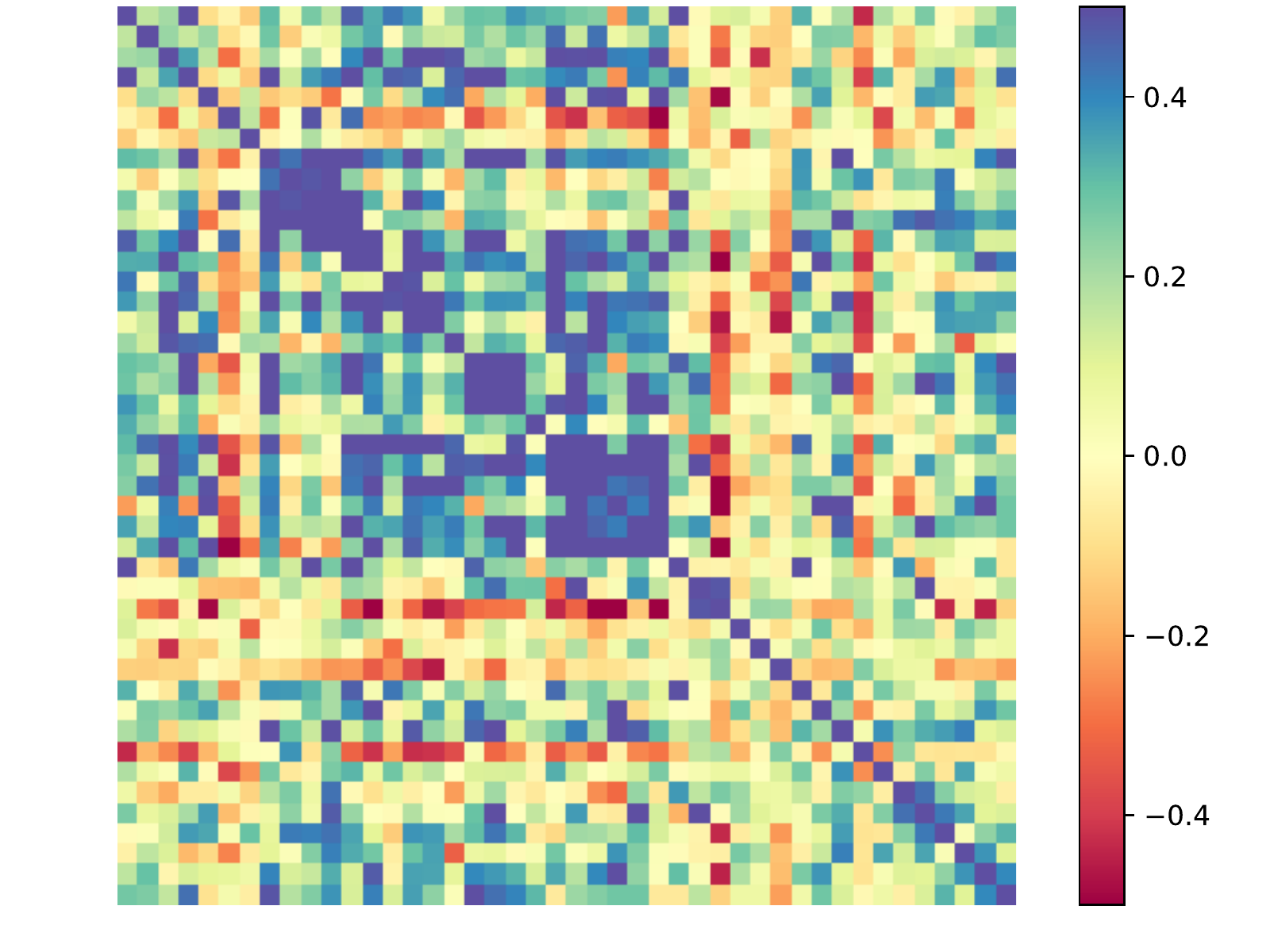}
			\vspace{-6mm}
			\caption*{$\Sigma$}
			\vspace{2mm}
			\includegraphics[width=1\linewidth]{images/real_estate/real_estate_perm}
			\vspace{-6mm}
			\caption*{{$\Sigma$}}
			\vspace{2mm}
			\includegraphics[width=1\linewidth]{images/real_estate/real_estate_perm}
			\vspace{-6mm}
			\caption*{$\Sigma$}
			\vspace{2mm}
			\includegraphics[width=1\linewidth]{images/real_estate/real_estate_perm}
			\vspace{-6mm}
			\caption*{$\Sigma$}
			\vspace{2mm}
			\includegraphics[width=1\linewidth]{images/real_estate/real_estate_perm}
			\vspace{-6mm}
			\caption*{$\Sigma$}
			\vspace{2mm}
		\end{minipage}%
		\begin{minipage}[c]{0.33\linewidth}%
			\centering 
			\vspace{0.3cm}
			\includegraphics[width=1\linewidth]{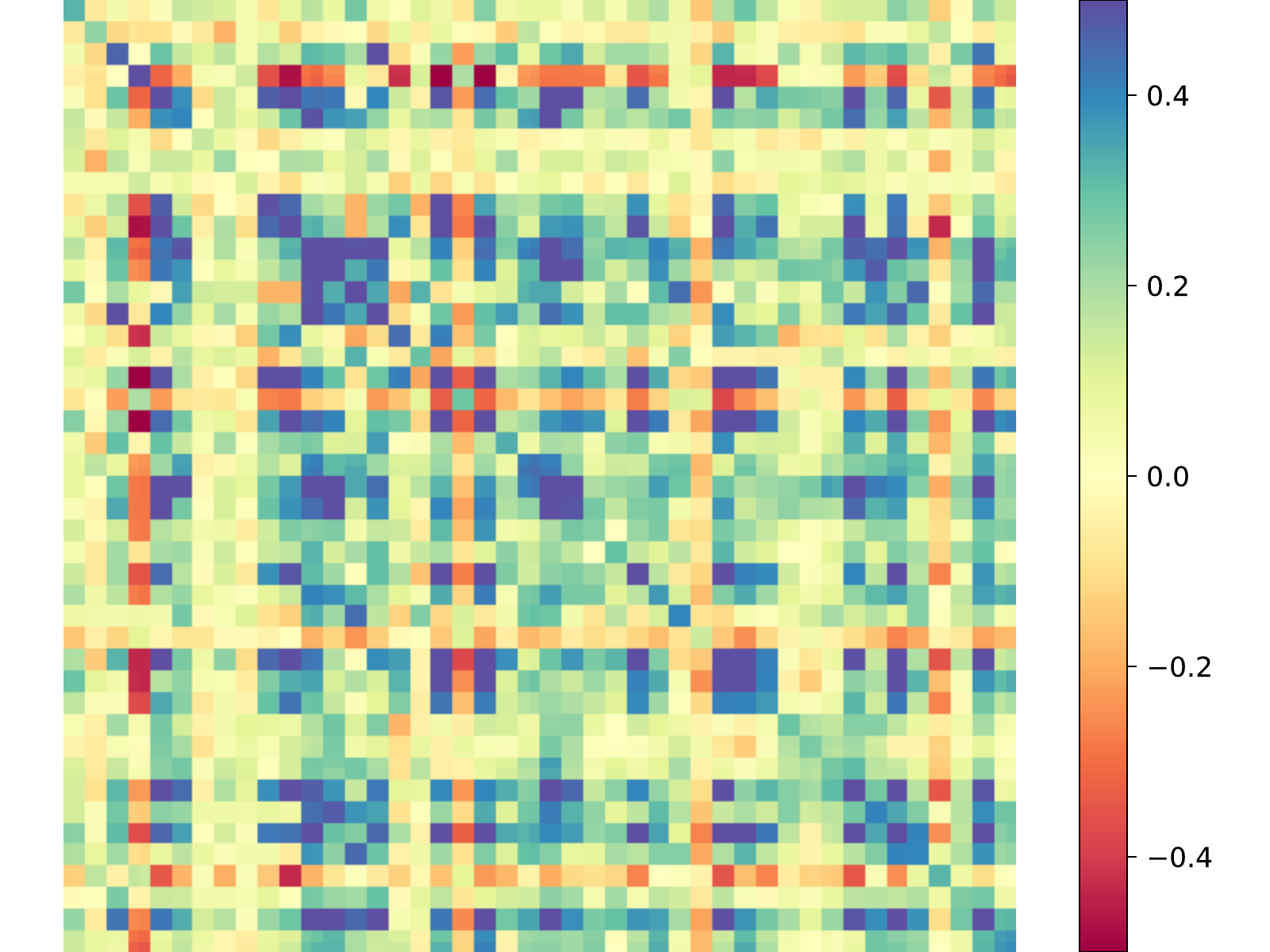}
						\vspace{-6mm}
			\caption*{$L$ (IALM)}
			\vspace{2mm}
			\includegraphics[width=1\linewidth]{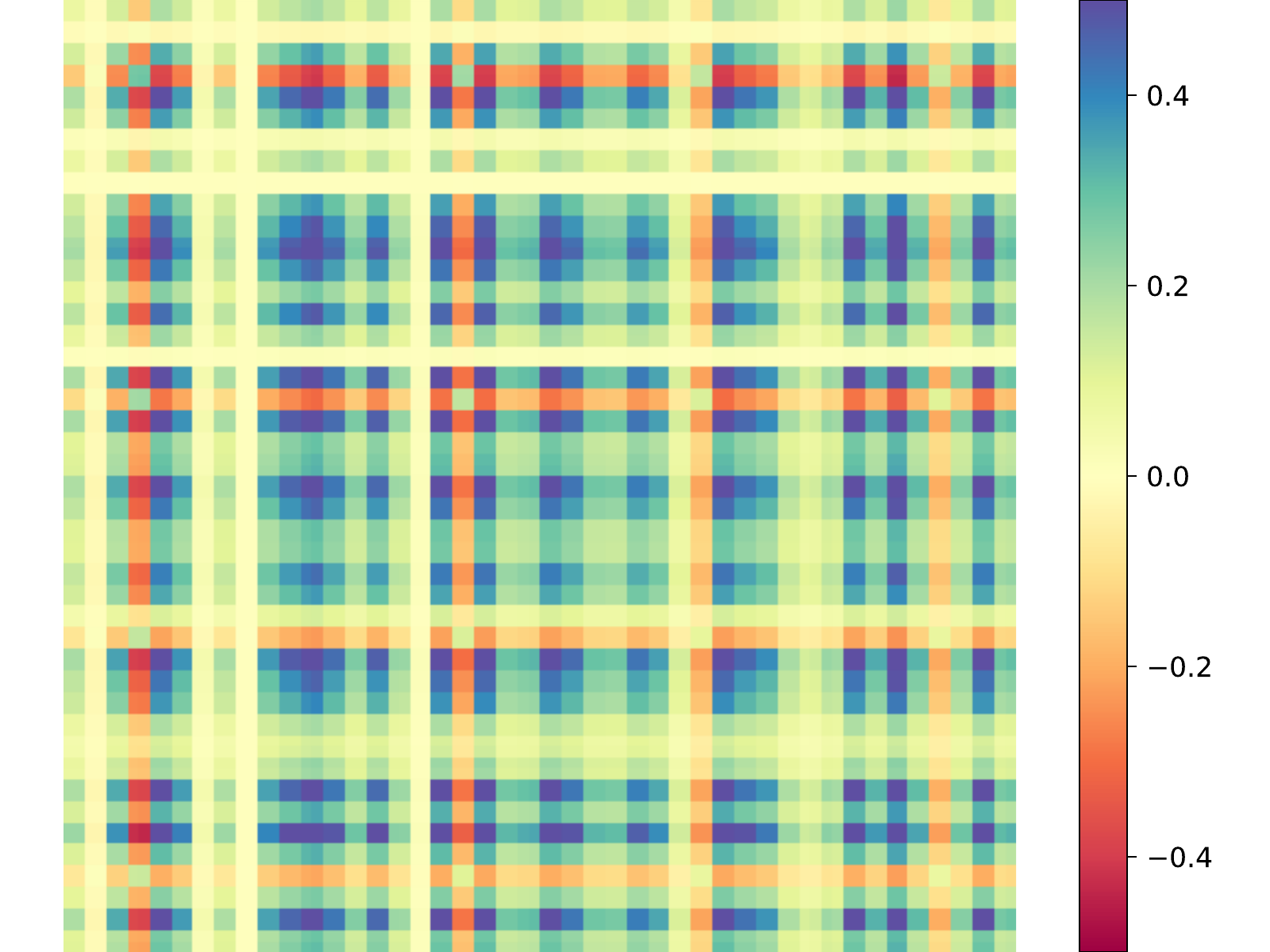}
			\vspace{-6mm}
			\caption*{$L$ (NC-RPCA)}
			\vspace{2mm}
			\includegraphics[width=1\linewidth]{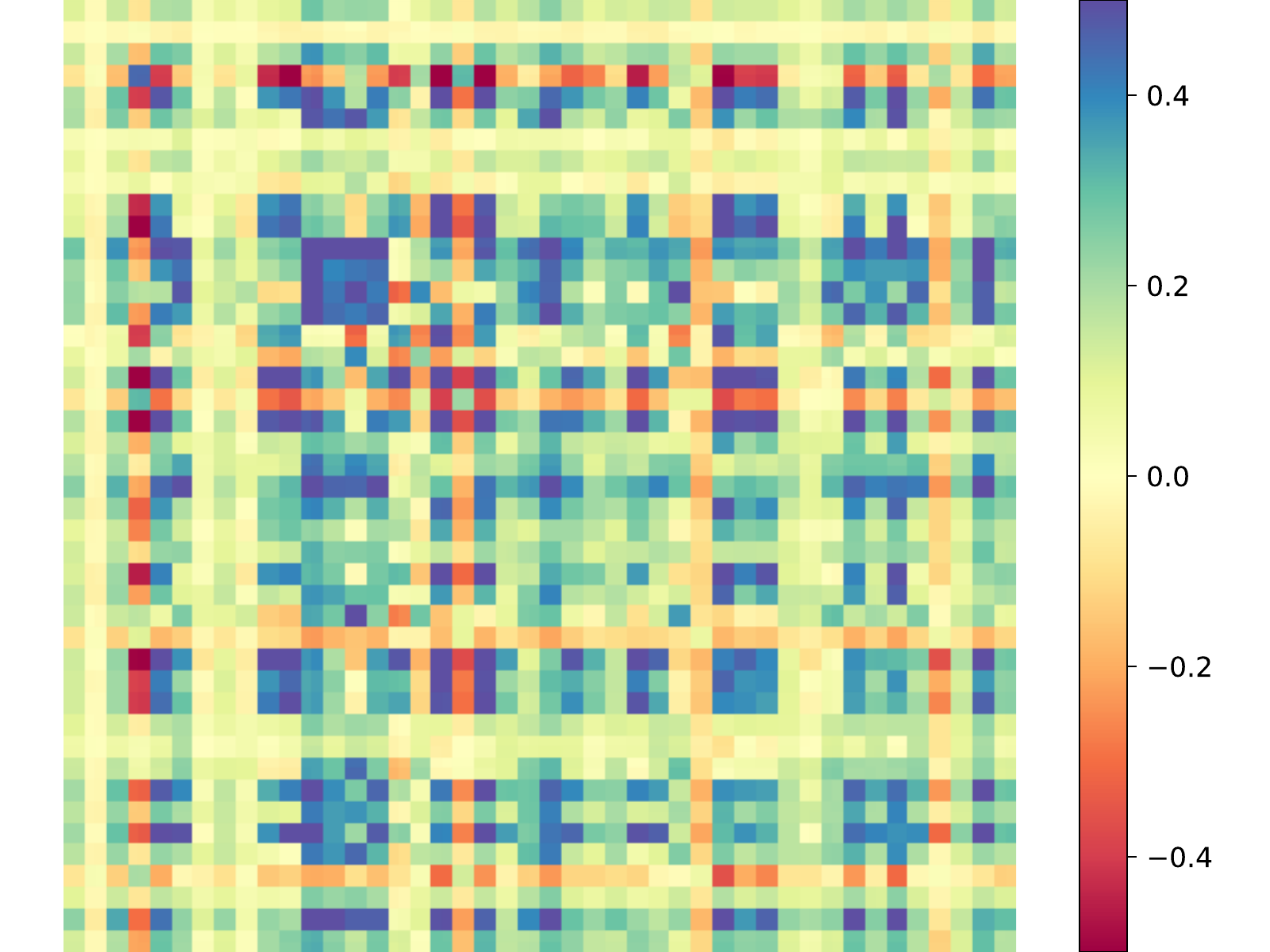}
			\vspace{-6mm}
			\caption*{$L$ (RPCA-GD)}
			\vspace{2mm}
			\includegraphics[width=1\linewidth]{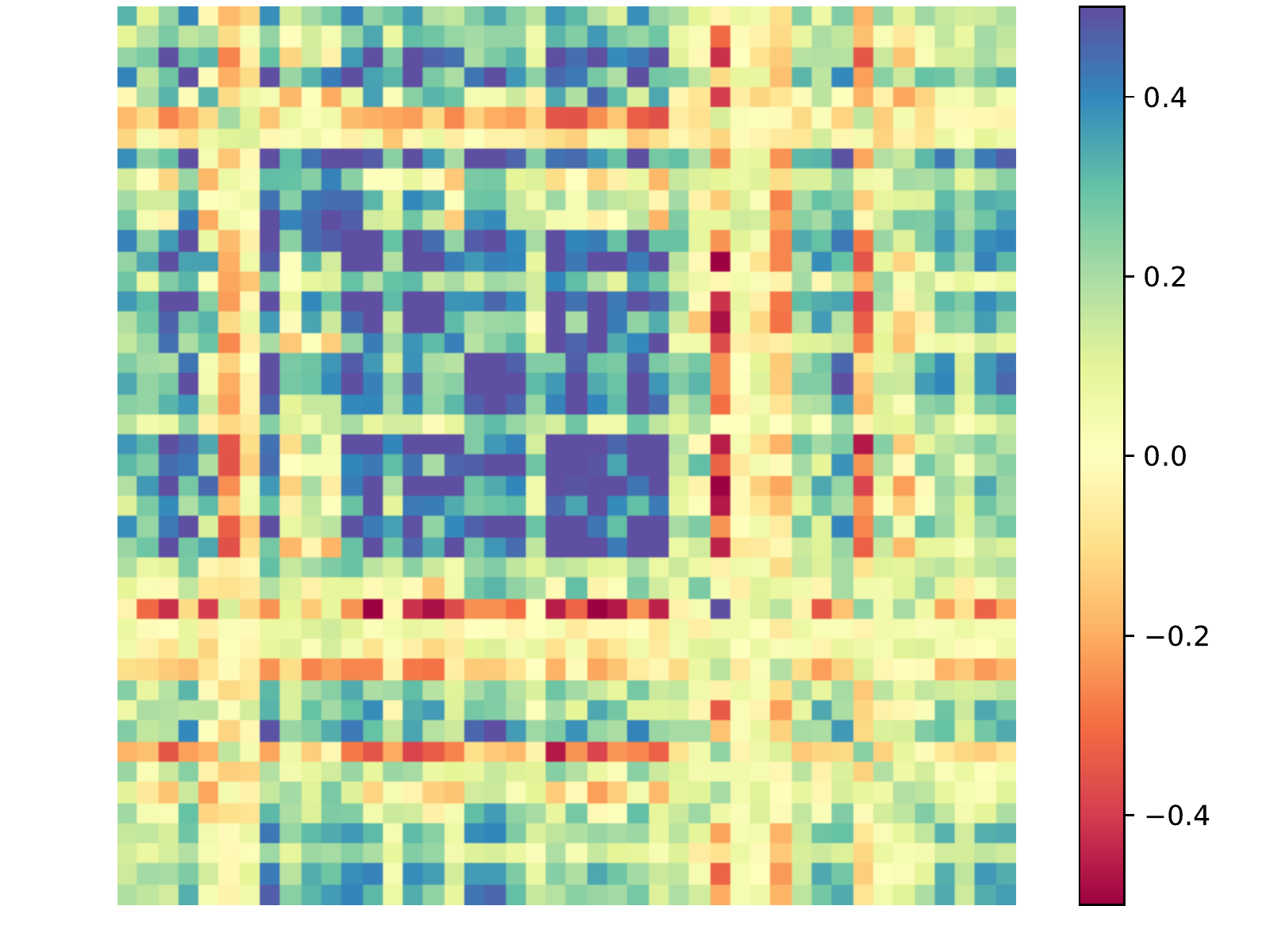}
			\vspace{-6mm}
			\caption*{$L$ (FPCP)}
			\vspace{2mm}
			\includegraphics[width=1\linewidth]{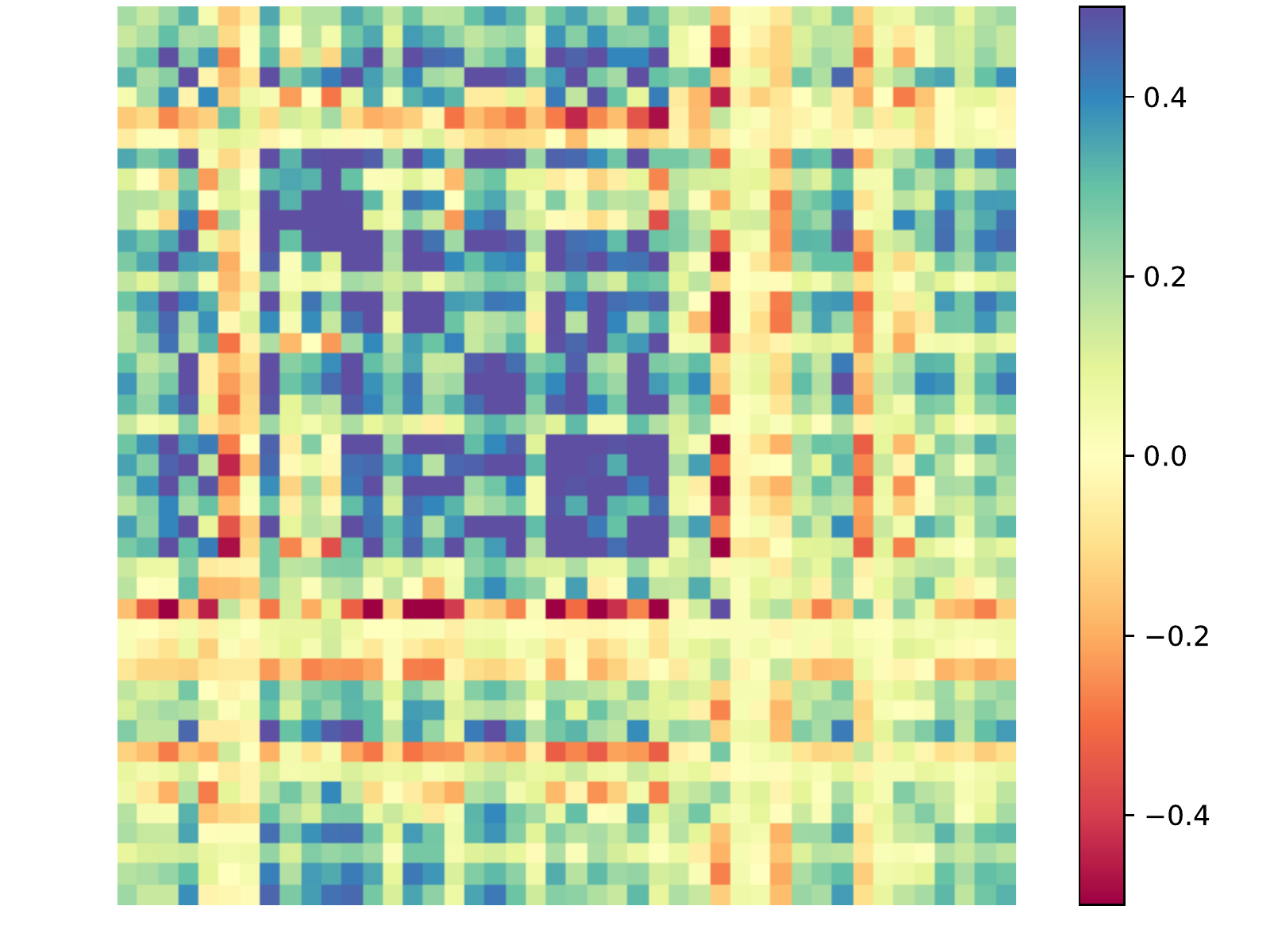} \vspace{-6mm}
			\caption*{$\phantom{ fdd}$ $L$ (DNN)  $\phantom{ fdd}$ \, }
			\vspace{2mm}
		\end{minipage}%
		\begin{minipage}[c]{0.33\linewidth}%
			\centering
			\vspace{0.3cm} \includegraphics[width=1\linewidth]{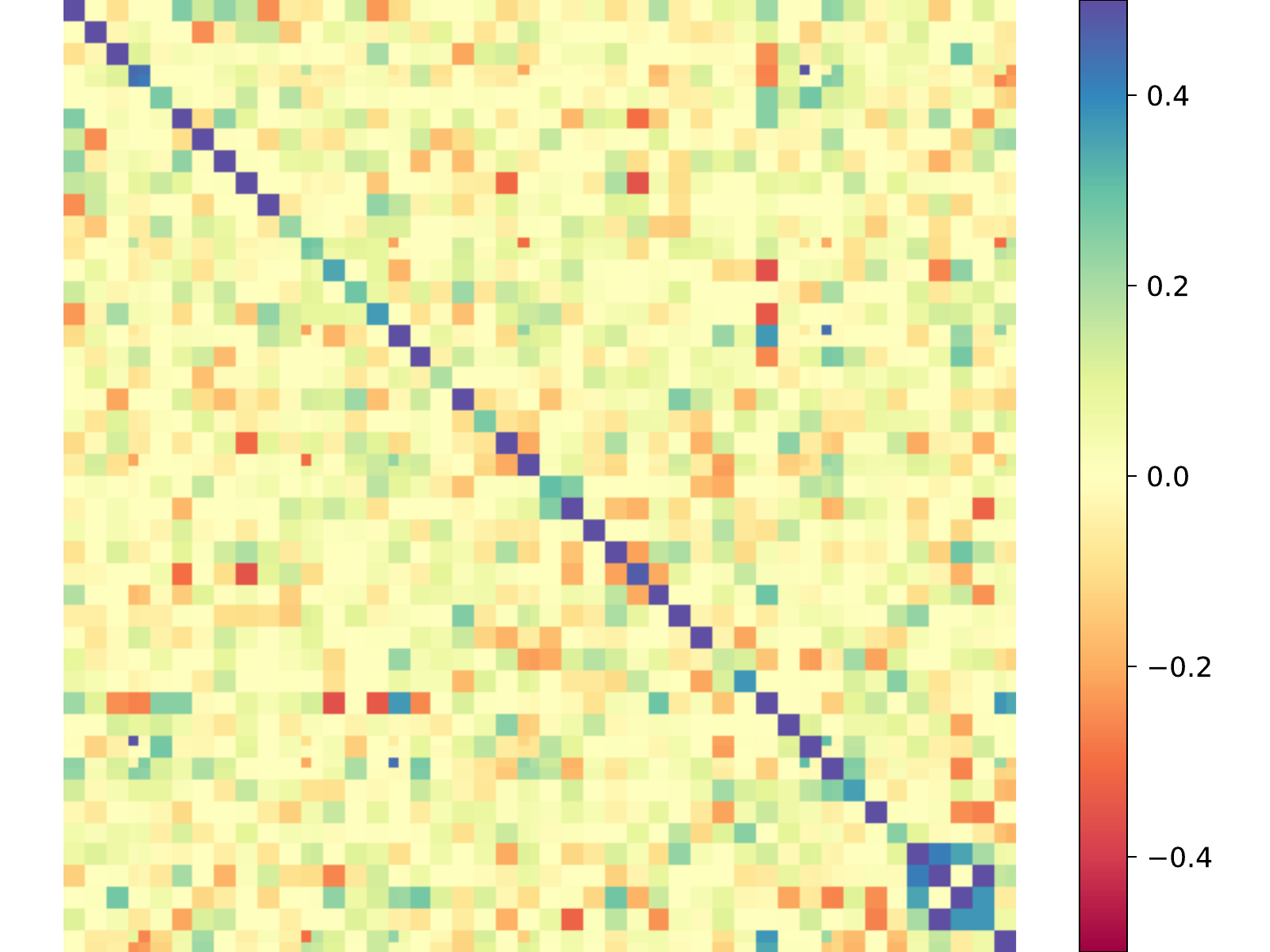}
			\vspace{-6mm}
			\caption*{$S$ (IALM)}
			\vspace{2mm}
			\includegraphics[width=1\linewidth]{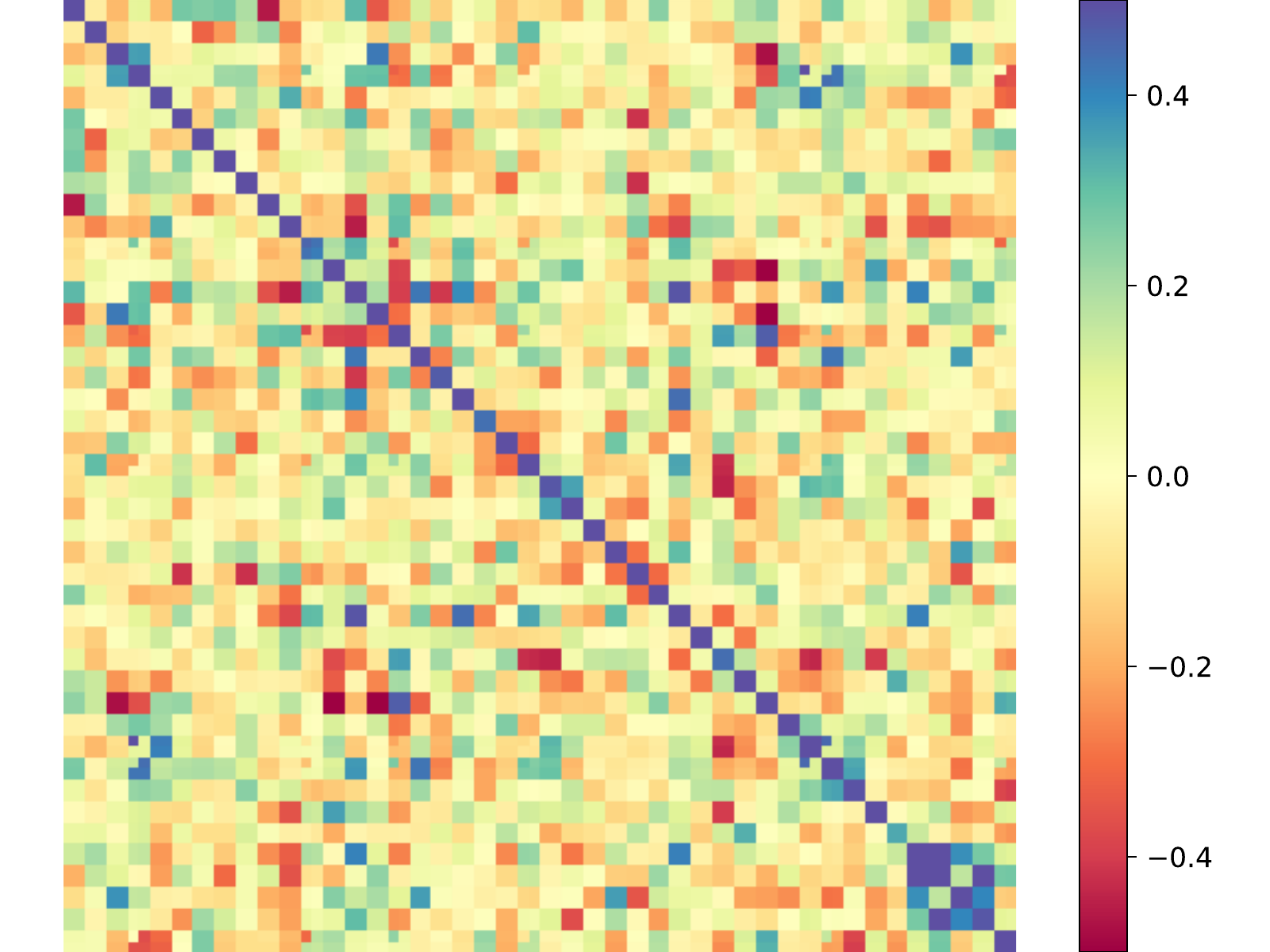}
			\vspace{-6mm}
			\caption*{$S$ (NC-RPCA)}
			\vspace{2mm}
			\includegraphics[width=1\linewidth]{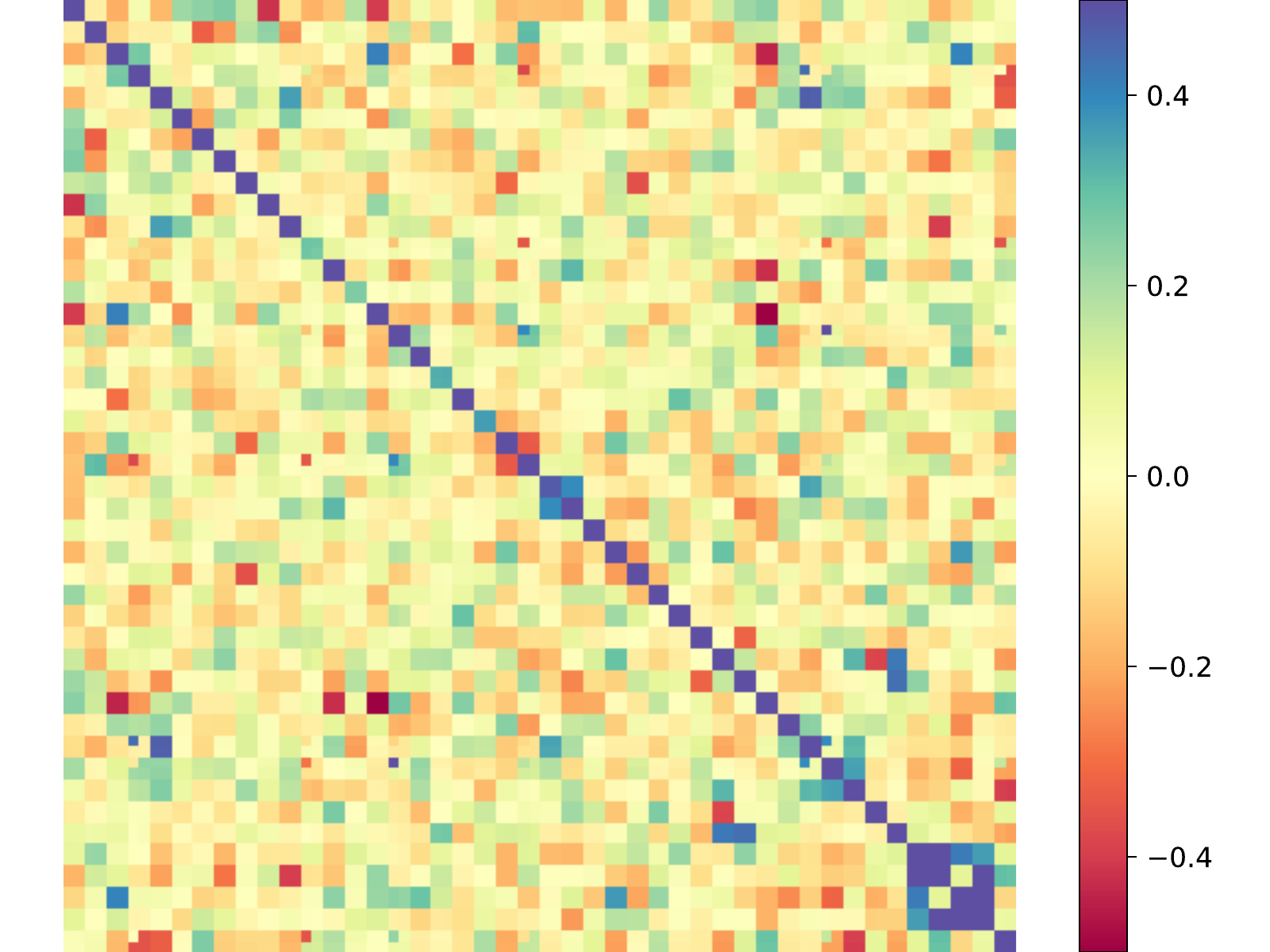}
			\vspace{-6mm}
			\caption*{$S$ (RPCA-GD)}
			\vspace{2mm}
			\includegraphics[width=1\linewidth]{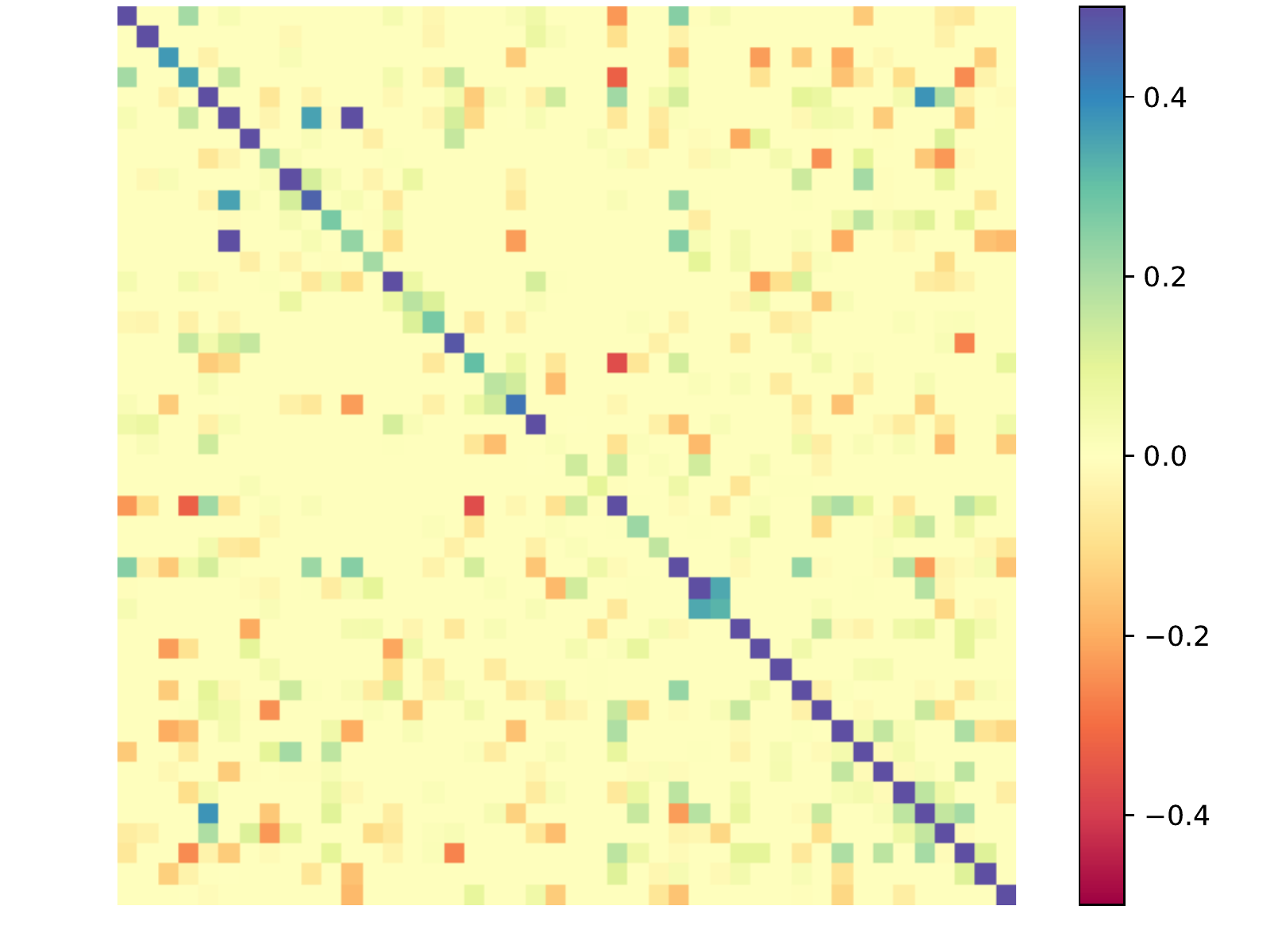}
			\vspace{-6mm}
			\caption*{$S$ (FPCP)}
			\vspace{2mm}
			\includegraphics[width=1\linewidth]{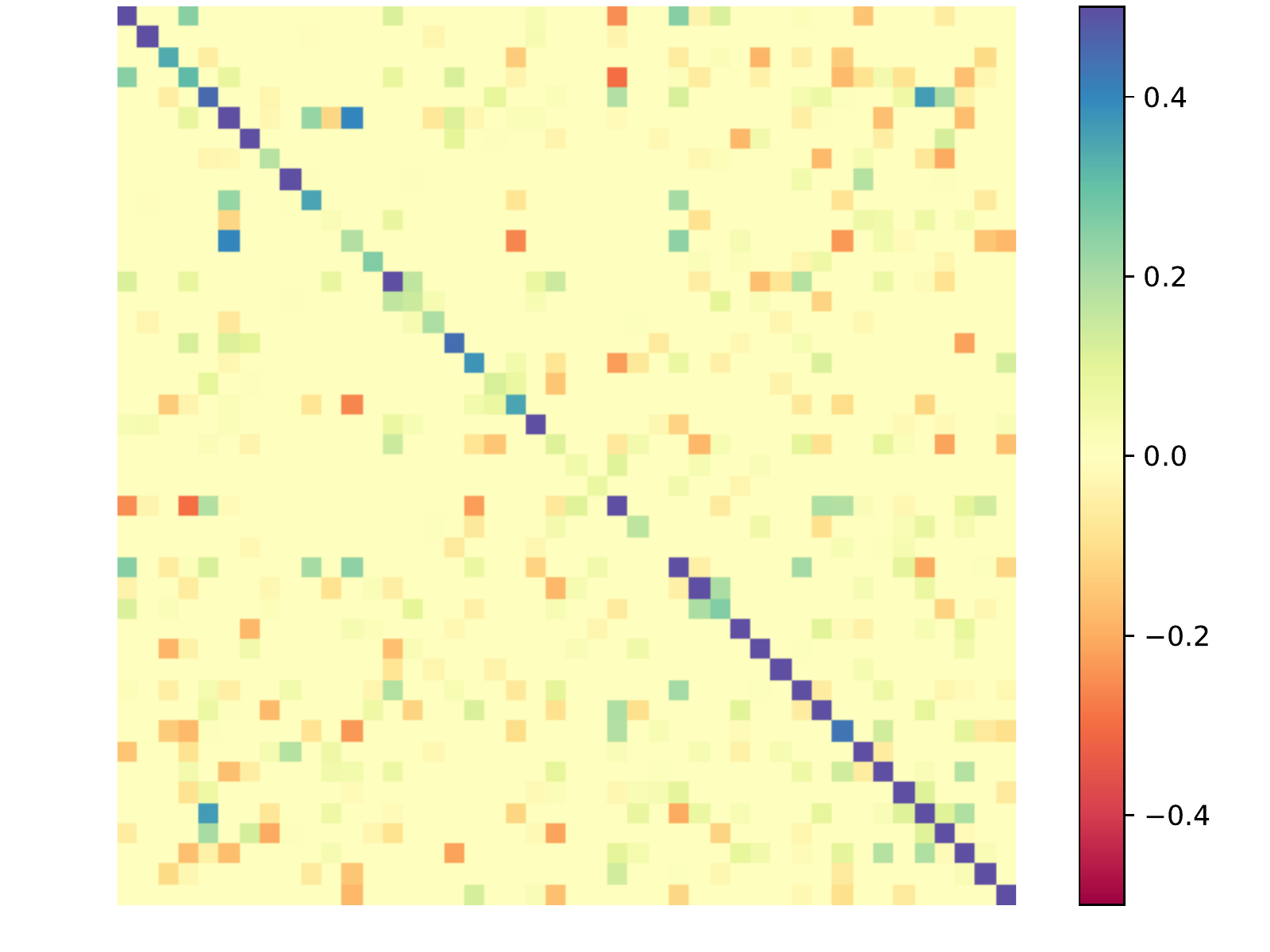} \vspace{-6mm}
			\caption*{$\phantom{ fdd}$ $S$ (DNN),  $\phantom{ fdd}$ \, with shrinkage}
			\vspace{2mm}
		\end{minipage} 
		\caption{Decomposition into a low-rank plus a sparse matrix of the correlation matrix of the real estate returns. The forced rank is set to $k=3$ for DNN and RCPA-GD. The matrix $L$ returned by FPCP has also a rank equal to $3$, while IALM gets a rank of 14 and Non-Convex RPCA gets a rank equal to 1. 
				The relative error of $L$ compared to $\Sigma$ is $0.41$ for DNN and $0.41$ for FPCP. The sparsity of $S$ is $0.65$ for FPCP and $0.76$ for DNN with shrinkage, $0.17$ without shrinkage. The relative error of $L + S$ compared to $\Sigma$ is $0.13$ for DNN and $0.24$ for FPCP. When shrinkage is not applied, this error drops down to $2.4 \cdot 10^{-8}$ with our algorithm.}
		\label{realestate}
	\end{figure}
\end{center}
%
%
%
%
\begin{center}
	\begin{figure}[!h]
		\begin{minipage}[c]{0.5\linewidth}%
			\centering \includegraphics[width=1\linewidth]{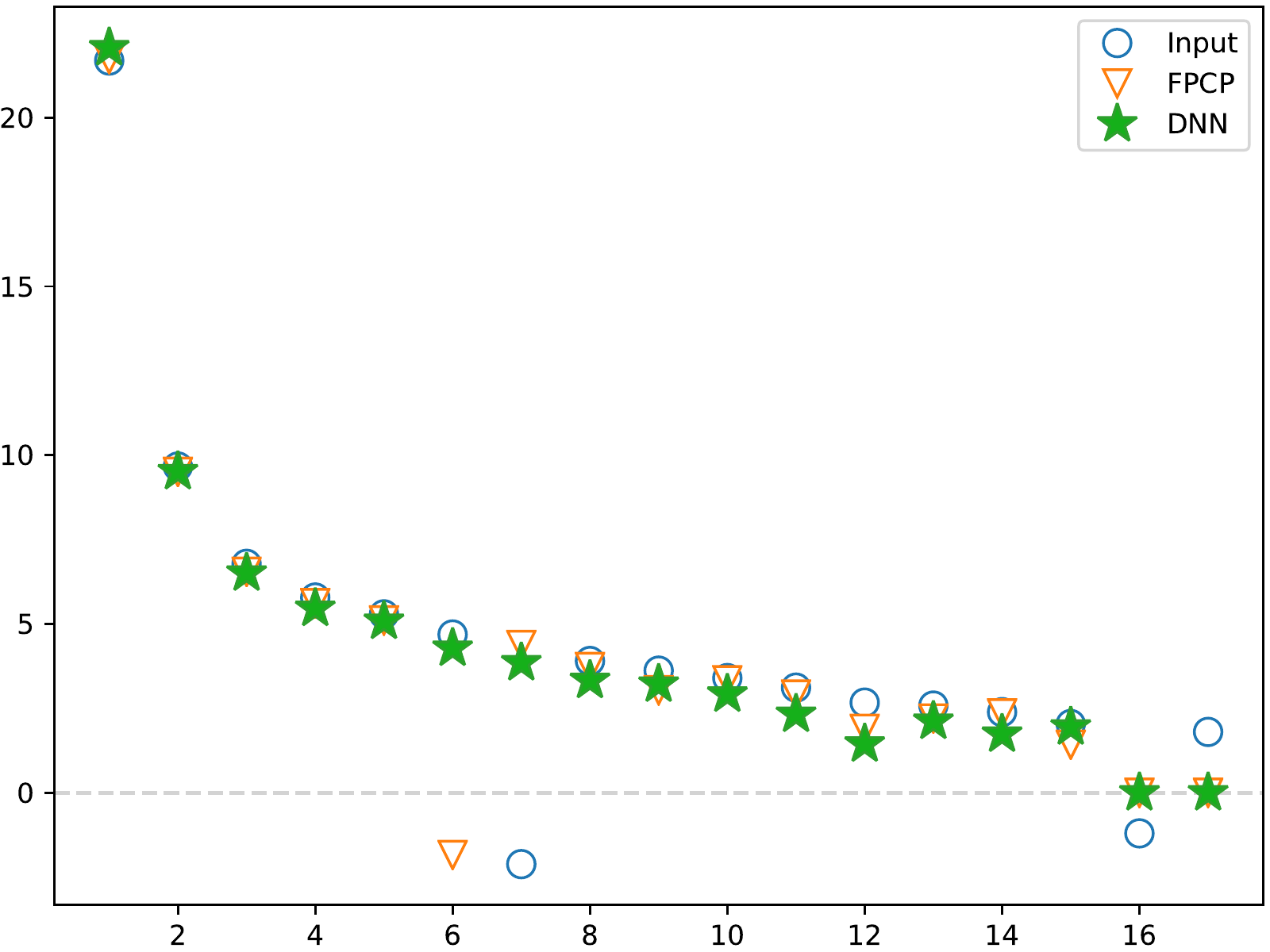}
			
		\end{minipage}%
		\begin{minipage}[c]{0.5\linewidth}%
			\centering \includegraphics[width=1\linewidth]{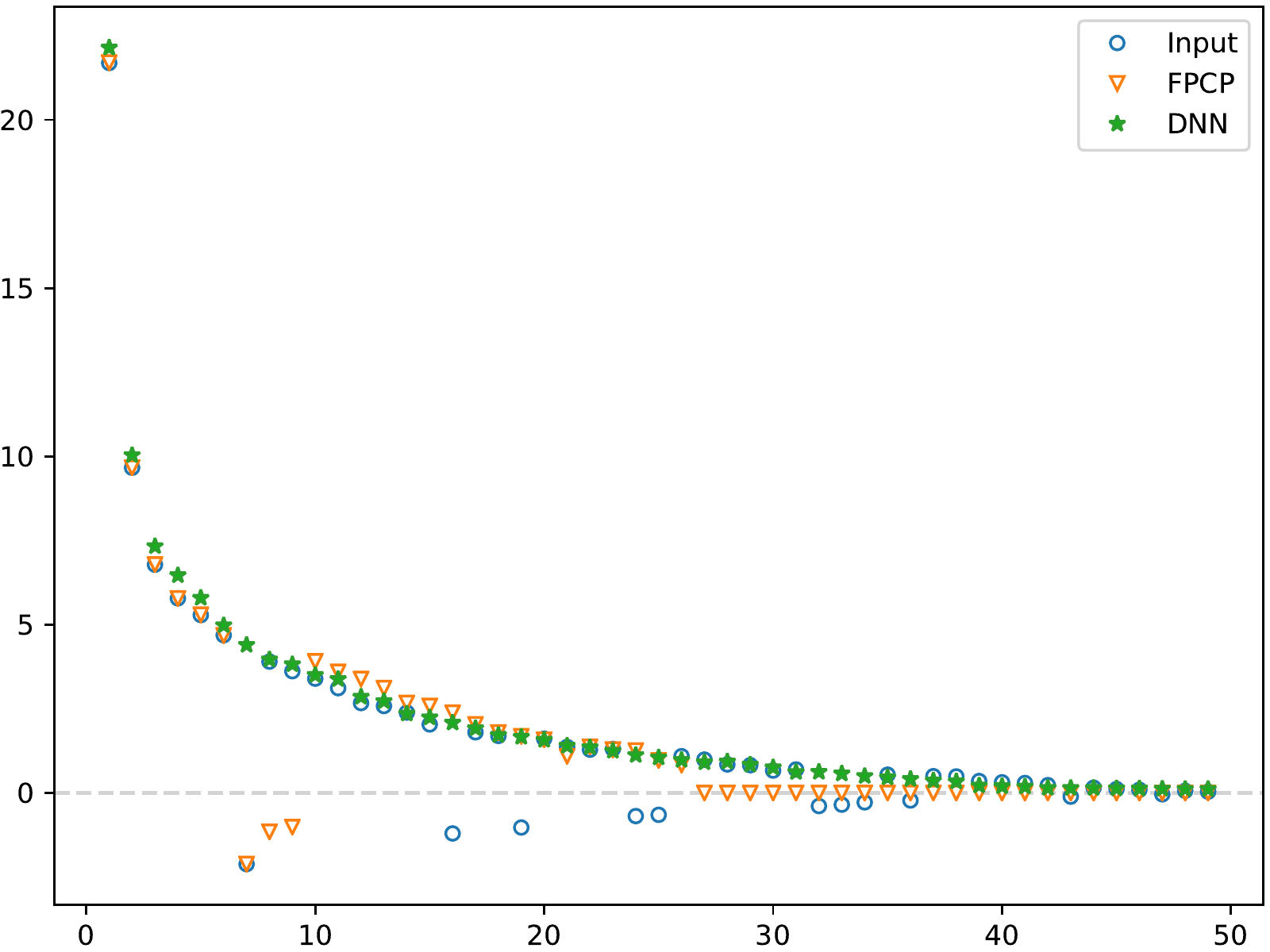}
			
		\end{minipage}%
		
		\begin{minipage}[c]{0.5\linewidth}%
			\centering \includegraphics[width=1\linewidth]{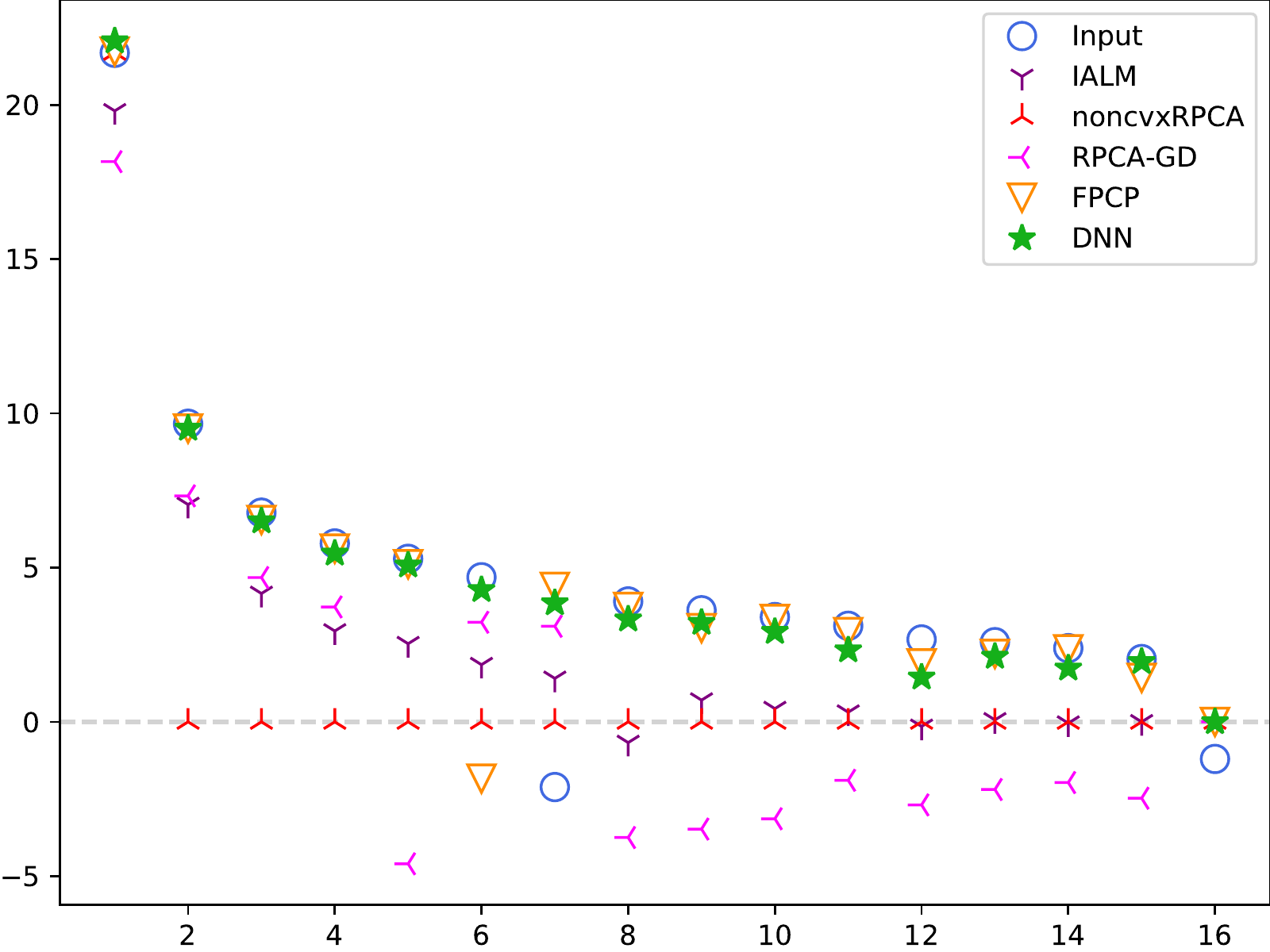}
			
		\end{minipage}%
		\begin{minipage}[c]{0.5\linewidth}%
			\centering \includegraphics[width=1\linewidth]{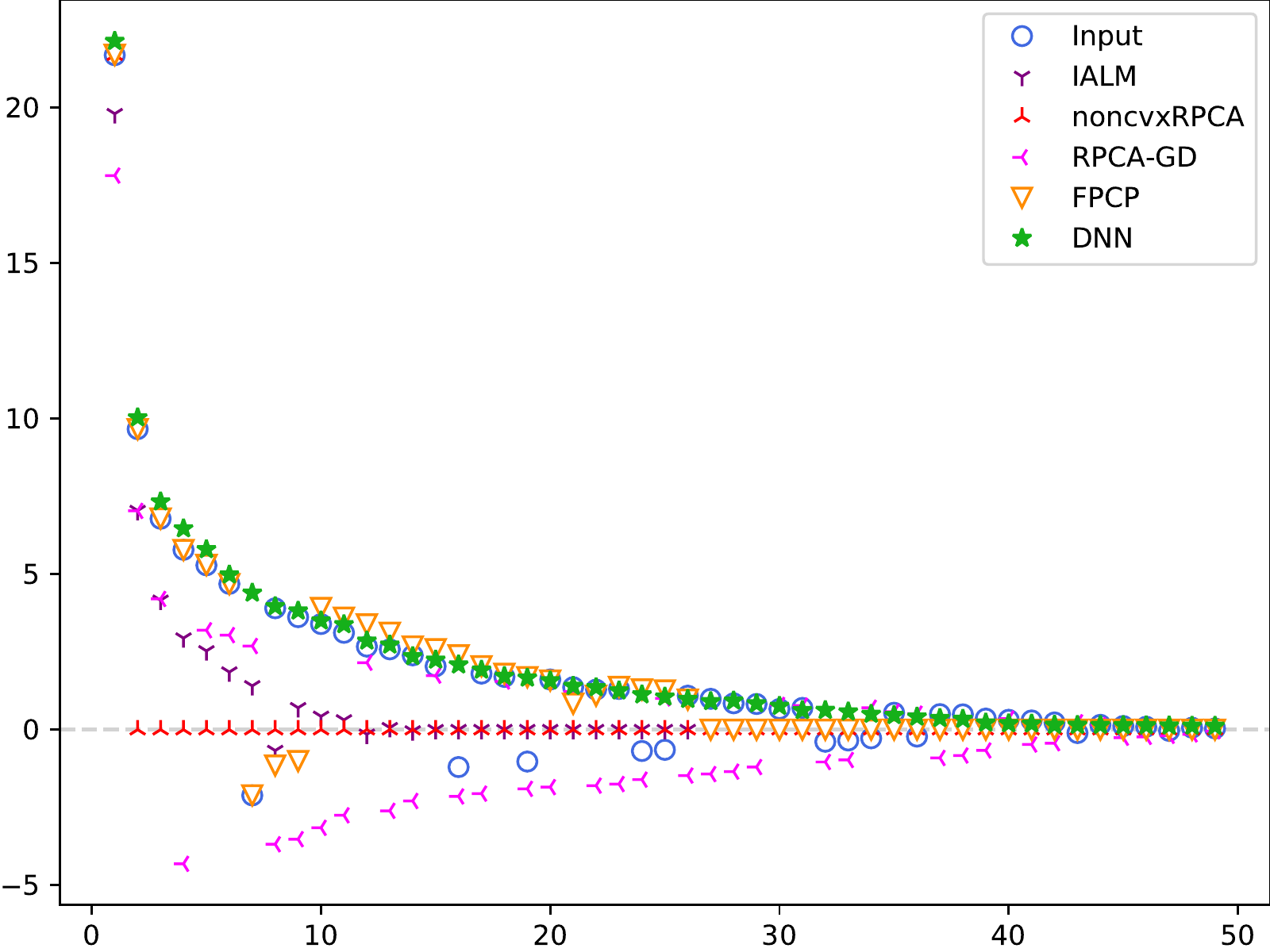}
			
		\end{minipage}%

		\caption{The first 17 (left panels) and 50 (right panels) eigenvalues of the correlation matrix of the real estate returns $\Sigma$ and of the low-rank
			matrix $L$ outputted. 
			The forced rank is set to $k=15$ (left) and to $k=88$ (right). 
			{For clarity, the first line compares our algorithm to FPCP only, while the second line compares all the algorithms.}
		}
		\label{realestate_eigen}
	\end{figure}
\end{center}
%
%
%
\subsection{Empirical verification of bounded parameters}\label{subsec:bound}
{To verify empirically} our assumption \eqref{ass-D} in Theorem~\ref{thm:convergence} that {the} parameters $(\Theta_j)_{j\in \N_0}$ generated by our algorithm \eqref{gradient-method} remain in a compact set, we plotted in Figure \ref{norm_weights} the running maximum $\max_{0\leq j \leq J} \|\Theta_j\|$ as a function of the number of iterations $J$ for both examples on the S\&P500 and the real estate data used in the previous section. 
For both cases, we observe, as desired, that the running maximum $\max_{0\leq j \leq J} \|\Theta_j\|$ converges, which means that at least empirically, $(\Theta_j)_{j\in \N_0}$ remains in a compact set. 
%
\begin{center}
	\begin{figure}[!h]
		\begin{minipage}[c]{0.52\linewidth}
			\centering \includegraphics[width=1\linewidth]{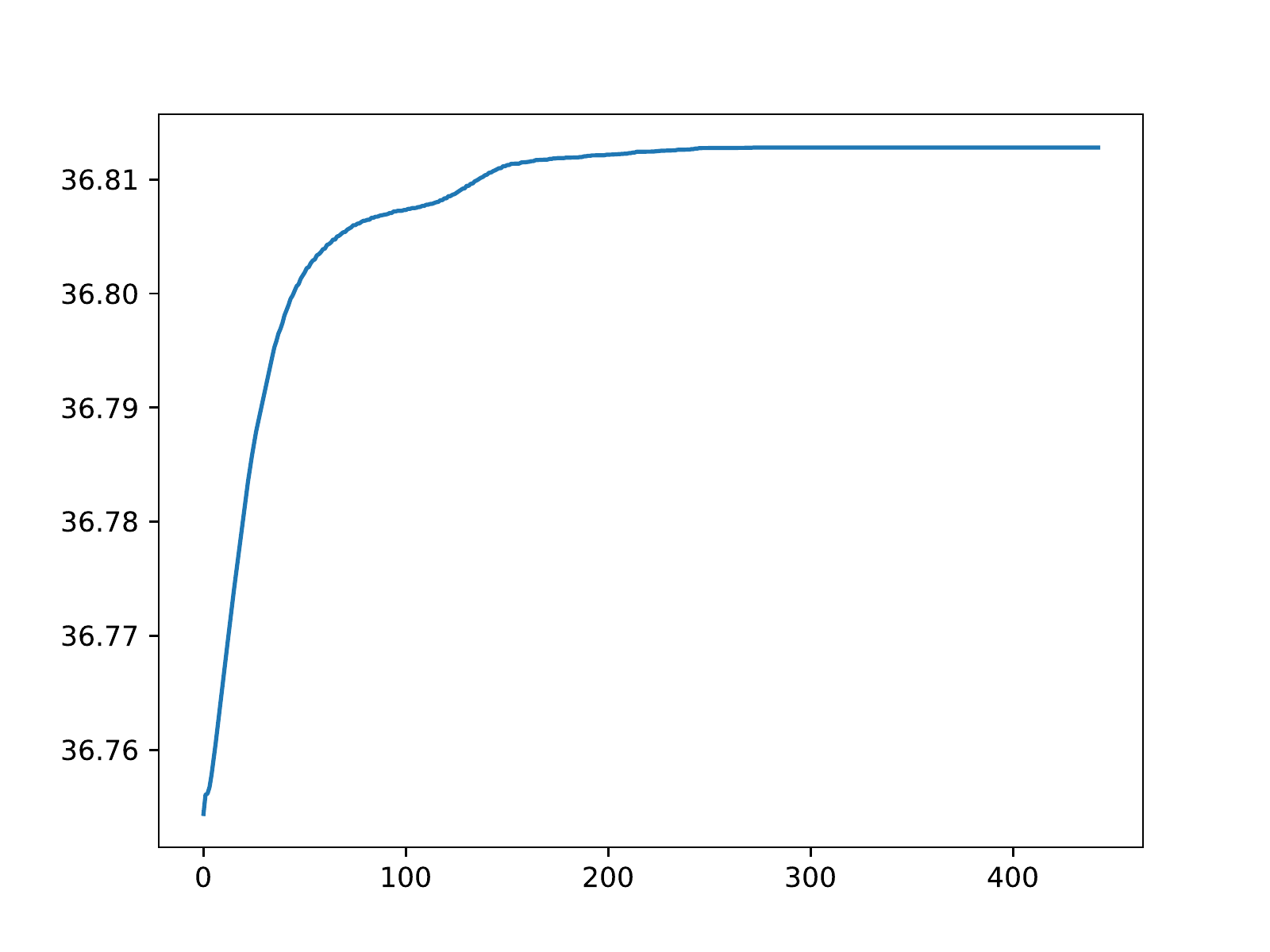}
			
		\end{minipage}%
		\begin{minipage}[c]{0.52\linewidth}
			\centering \includegraphics[width=1\linewidth]{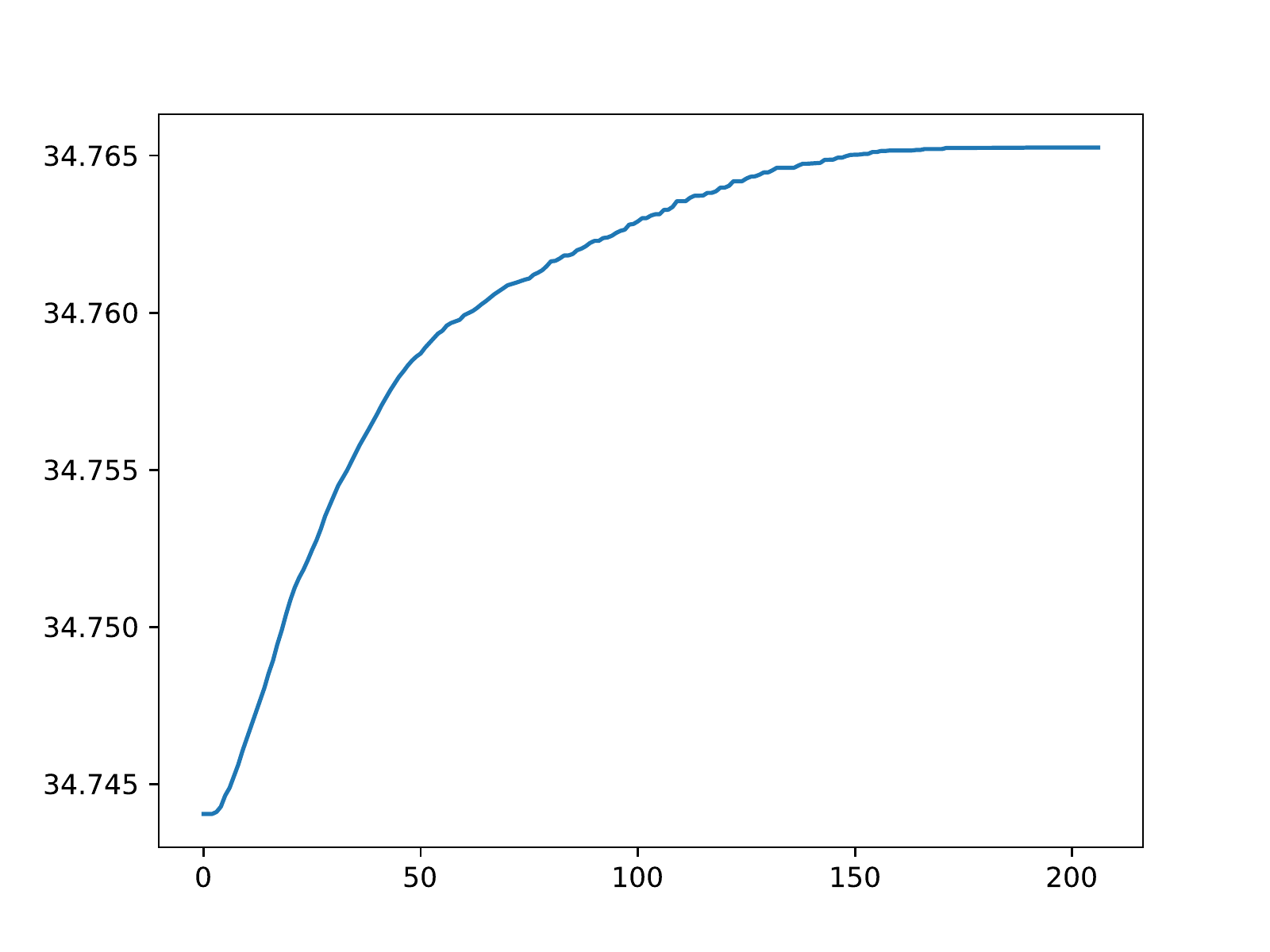}
		\end{minipage}%
		\caption{The running maximum $\max_{0\leq j \leq J} \|\Theta_j\|$ (y-axis) of our parameters $(\Theta_j)_{j \in \N_0}$  generated by our DNN algorithm, plotted as a function of iterations $J$ (x-axis) for both examples on  the S\&P500 stock prices  (left) and on the real estate returns (right).}
		\label{norm_weights}
	\end{figure}
\end{center}
\section{Proof of convergence}\label{sec:proof}

A vast majority of first-order methods for minimizing locally a non-convex
function with provable convergence rate are meant to minimize $L$-smooth
functions, that is, differentiable functions with a Lipschitz continuous
gradient. Also, the value of the Lipschitz constant with respect to
a suitable norm plays a prominent role in this convergence rate 
{ (see \cite{ghadimi2016accelerated} and references therein). As
a critical step in the convergence proof for the minimization procedure
of the function ${\varphi}$, we compute carefully a bound on the Lipschitz
constant of its gradient, also called the \emph{smoothness constant} below. 

For establishing this bound, we use in a critical way the recursive compositional nature of the neural network that represents $M$: we first consider the case of a neural network with a single layer. We compute explicitly the derivative of the corresponding objective function $\varphi(\Theta)$ with respect to each parameter in Lemmas \ref{lem:der-b} and \ref{lem:der-d}. Then, using an elementary lemma on the smoothness constant of a composition and a product of functions, we carefully derive an upper bound on the derivative of a single-layered network output with respect to the bias on the input in Lemma \ref{lem:Lip-b}. This result represents the key element of our proof. Lemma \ref{lem:Lip-d} completes the picture by computing such a bound for derivatives with respect to other coefficients. Lemma \ref{lem:L-smooth} merges these results and provides a bound on the overall smoothness constant for the case where $M$ is parametrized by a single-layer neural network. Finally, we deduce inductively a bound on this Lipschitz constant for a multi-layered neural network in the proof of Theorem \ref{thm:convergence}. }

So, we consider in the beginning of this section a neural
network with a single layer of $\ell$ neurons. These neurons have each $\sigma:\mathbb{R}\to[-1,1]$ as activation function, and $\sigma$ has its first and the second derivative uniformly bounded
by $\sigma'_{\textrm{max}}>0$ and $\sigma''_{\textrm{max}}$,
respectively. 
We let $r:=n(n+1)/2$, 
$A=\left[A_{i,j}\right]_{i,j}\in\mathbb{R}^{\ell\times r}$ be the
weights, and $b=\left[b_{i}\right]_{i}\in\mathbb{R}^{\ell}$ be
the bias on the input, and let 
\[
f_{1}^{A,b}:\mathbb{R}^{r}\to\mathbb{R^{\ell}},\qquad w\mapsto f_{1}^{A,b}(w)=Aw+b.
\]
The coefficients on the output are denoted by $C=\left[C_{i,j}\right]_{i,j}\in\mathbb{R}^{nk\times\ell}$
and the bias by $d=\left[d_{i}\right]_{i}\in\mathbb{R}^{nk}$. As above,
we define 
\[
f_{2}^{C,d}:\mathbb{R}^{\ell}\to\mathbb{R}^{nk}\qquad v\mapsto f_{2}^{C,d}(v)=Cv+d.
\]
With $\tilde{\sigma}:\mathbb{R}^{\ell}\to\mathbb{R}^{\ell}$, $u\mapsto\tilde{\sigma}(u)=\left(\sigma(u_{1})\cdots\sigma(u_{\ell})\right)^{T}$, and $\Theta:=(A,b,C,d)$
the single layer neural network $\cNt:\mathbb{R}^{r}\to\mathbb{R}^{nk}$
is then the composition of these three functions, that is: $\cNt=f_{2}^{C,d}\circ\tilde{\sigma}\circ f_{1}^{A,b}$.
%
For a given $\Sigma\in\mathbb{S}^{n}$, our approximated objective function with respect to the above single-layer neural network  is defined by
\begingroup\makeatletter\def\f@size{7}\check@mathfonts
\def\maketag@@@#1{\hbox{\m@th\normalsize\normalfont#1}}
\begin{equation}\label{eq:tilde phi}
\tilde{\varphi}(\Theta)=  \sum_{i,j=1}^{n}\mu\left(\left[g\left(\cNt\left(h(\Sigma)\right)\right)g\left(\cNt\left(h(\Sigma)\right)\right)^{T}-\Sigma\right]_{i,j}\right),
\end{equation}
\endgroup
where $\mu:\mathbb{R}\to\mathbb{R}$ is a smooth approximation of
the absolute value function with a derivative uniformly bounded by
$1$ and its second derivative bounded by $\mu''_{\max}$.

As announced above, we start our proof by computing the partial derivatives of $\tilde{\varphi}$.
For abbreviating some lengthy expressions, we use the following shorthand notation throughout this section. We fix $\Sigma \in \mathbb S^n$ and let for $1\leq i,j \leq n$,
\begin{equation}\label{shortcut-notation}
\begin{split}
\omega_{i,j}&\!:=\!\left[g\left(\cNt\left(h(\Sigma)\right)\right)g\left(\cNt\left(h(\Sigma)\right)\right)^{T}-\Sigma\right]_{i,j}\!,\\ 
X&:=\cNt\left(h(\Sigma)\right)\in\mathbb{R}^{nk},\\
Y&:=\tilde{\sigma}\circ f_{1}^{A,b}(h(\Sigma))\in\mathbb{R}^{\ell},\\
Z&:=f_{1}^{A,b}(h(\Sigma))\in\mathbb{R}^{\ell}.
\end{split}
\end{equation} 

\begin{lem}\label{lem:der-b}
	Let $\tilde{\varphi}$ be the function defined in \eqref{eq:tilde phi} and $1\leq \iota \leq \ell$. Then
	\begingroup\makeatletter\def\f@size{7}\check@mathfonts
	\def\maketag@@@#1{\hbox{\m@th\normalsize\normalfont#1}}
	\begin{equation*}
	\begin{split}
	\frac{\partial\tilde{\varphi}(\Theta)}{\partial b_{\iota}}=
	\!\sum_{i,j=1}^{n}&\!\mu'(\omega_{i,j})\sigma'(\!Z_{\iota}\!)\times\\
	&\left(\sum_{s=1}^{k}\!\left(C_{(i-1)k+s,\iota}X_{(j-1)k+s}\!+\!X_{(i-1)k+s}C_{(j-1)k+s,\iota}\!\right)\right).
	\end{split}
	\end{equation*}
	\endgroup
	Moreover,  for every $1\leq \iota \leq \ell$, $1\le\eta\le r$ we have
	\begin{equation*}	\frac{\partial\tilde{\varphi}(\Theta)}{\partial A_{\iota,\eta}}=[h(\Sigma)]_{\eta}\frac{\partial\tilde{\varphi}(\Theta)}{\partial b_{\iota}}.
	\end{equation*}
\end{lem}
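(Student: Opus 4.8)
The plan is to compute the partial derivative of $\tilde\varphi$ directly via the chain rule, exploiting the compositional structure $\tilde\varphi(\Theta) = \sum_{i,j}\mu(\omega_{i,j})$, where each $\omega_{i,j}$ is a quadratic form in the output $X = \cNt(h(\Sigma))$, and $X$ in turn depends on $b$ through $Y = \tilde\sigma(Z)$ with $Z = Ah(\Sigma)+b$. The outermost step gives $\partial\tilde\varphi/\partial b_\iota = \sum_{i,j}\mu'(\omega_{i,j})\,\partial\omega_{i,j}/\partial b_\iota$. For the middle step I need to express $\omega_{i,j}$ explicitly in terms of the entries of $X$. Recalling that $g$ fills the $n\times k$ matrix $M$ row-after-row, so $M_{a,s} = X_{(a-1)k+s}$, we have $[MM^T]_{i,j} = \sum_{s=1}^k M_{i,s}M_{j,s} = \sum_{s=1}^k X_{(i-1)k+s}X_{(j-1)k+s}$, hence $\omega_{i,j} = \sum_{s=1}^k X_{(i-1)k+s}X_{(j-1)k+s} - \Sigma_{i,j}$. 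Differentiating this product rule in $X$ yields, for any output index $p$, $\partial\omega_{i,j}/\partial X_p$ picking out the two terms where $p = (i-1)k+s$ or $p=(j-1)k+s$.

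Next I would compute $\partial X_p/\partial b_\iota$. Since $X = f_2^{C,d}(Y) = CY+d$, we get $X_p = \sum_{t=1}^\ell C_{p,t}Y_t + d_p$, so $\partial X_p/\partial b_\iota = \sum_t C_{p,t}\,\partial Y_t/\partial b_\iota$. Because $Y_t = \sigma(Z_t)$ and $Z_t = \sum_\eta A_{t,\eta}[h(\Sigma)]_\eta + b_t$, we have $\partial Y_t/\partial b_\iota = \sigma'(Z_t)\,\delta_{t,\iota}$, which collapses the sum to $\partial X_p/\partial b_\iota = C_{p,\iota}\,\sigma'(Z_\iota)$. Substituting this into $\partial\omega_{i,j}/\partial b_\iota = \sum_p (\partial\omega_{i,j}/\partial X_p)(\partial X_p/\partial b_\iota)$ and using the two-term structure of $\partial\omega_{i,j}/\partial X_p$ gives exactly
$\sigma'(Z_\iota)\sum_{s=1}^k\bigl(C_{(i-1)k+s,\iota}X_{(j-1)k+s} + X_{(i-1)k+s}C_{(j-1)k+s,\iota}\bigr)$,
and multiplying by $\mu'(\omega_{i,j})$ and summing over $i,j$ yields the claimed formula for $\partial\tilde\varphi/\partial b_\iota$.

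For the second identity, I would observe that $b_\iota$ and the whole row $A_{\iota,\cdot}$ enter the network only through $Z_\iota = \sum_\eta A_{\iota,\eta}[h(\Sigma)]_\eta + b_\iota$. Hence $\partial Z_\iota/\partial A_{\iota,\eta} = [h(\Sigma)]_\eta$ while $\partial Z_\iota/\partial b_\iota = 1$, and every other intermediate quantity ($Z_{t}$ for $t\neq\iota$, and then $Y$, $X$, $\omega_{i,j}$) depends on these parameters only via $Z_\iota$. By the chain rule $\partial\tilde\varphi/\partial A_{\iota,\eta} = (\partial\tilde\varphi/\partial Z_\iota)\,[h(\Sigma)]_\eta$ and $\partial\tilde\varphi/\partial b_\iota = (\partial\tilde\varphi/\partial Z_\iota)\cdot 1$, so dividing gives $\partial\tilde\varphi/\partial A_{\iota,\eta} = [h(\Sigma)]_\eta\,\partial\tilde\varphi/\partial b_\iota$. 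There is no real obstacle here; the only point requiring care is bookkeeping of the index map $(a,s)\mapsto (a-1)k+s$ induced by $g$, and making sure the product-rule differentiation of the quadratic form $\sum_s X_{(i-1)k+s}X_{(j-1)k+s}$ correctly handles the (possible) coincidence $i=j$, which it does since the two-term expression is symmetric and already accounts for both factors.
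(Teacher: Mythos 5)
Your proposal is correct and follows essentially the same route as the paper: a direct chain-rule computation through the composition $f_2^{C,d}\circ\tilde\sigma\circ f_1^{A,b}$, yielding $\partial X_p/\partial b_\iota = C_{p,\iota}\sigma'(Z_\iota)$ and then differentiating the quadratic form $\omega_{i,j}=\sum_s X_{(i-1)k+s}X_{(j-1)k+s}-\Sigma_{i,j}$, with the second identity obtained by noting that $A_{\iota,\eta}$ and $b_\iota$ enter only through $Z_\iota$. The paper phrases the intermediate steps with matrix and third-order-tensor derivatives where you use scalar index bookkeeping, but the substance is identical.
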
 
\begin{proof}
	Let $1\le\iota\le\ell$.
	By definition of $\tilde{\varphi}$, we have 
	\begingroup\makeatletter\def\f@size{7}\check@mathfonts
	\def\maketag@@@#1{\hbox{\m@th\normalsize\normalfont#1}}
	\begin{equation}
	\label{eq:pa-b-start}
	\begin{split}
	\frac{\partial\tilde{\varphi}(\Theta)}{\partial b_{\iota}} &=\sum_{i,j=1}^{n}\mu'(\omega_{i,j})\Bigg(\Bigg[\tfrac{\partial g\left(\cNt\left(h(\Sigma)\right)\right)}{\partial b_{\iota}}g\left(\cNt\left(h(\Sigma)\right)\right)^{T}\\
	%
	& \qquad\qquad\qquad  \quad
	+
	g\left(\cNt\left(h(\Sigma)\right)\right)
	\tfrac{\partial g\left(\cNt\left(h(\Sigma)\right)\right)^{T}}{\partial b_{\iota}}\Bigg]_{i,j}\Bigg).
	\end{split}
	\end{equation}
	\endgroup
	As $\cNt\equiv f_{2}^{C,d}\circ\tilde{\sigma}\circ f_{1}^{A,b}$, we have
	\begingroup\makeatletter\def\f@size{7}\check@mathfonts
	\def\maketag@@@#1{\hbox{\m@th\normalsize\normalfont#1}}
	\begin{equation}\label{eq:pa-b-comp}
	\tfrac{\partial g\left(\cNt\left(h(\Sigma)\right)\right)}{\partial b_{\iota}}=\nabla_{X}g(X)\cdot\nabla_{Z}\left(f_{2}^{C,d}\circ\tilde{\sigma}\right)(Z)\cdot\frac{\partial f_{1}^{A,b}(h(\Sigma))}{\partial b_{\iota}}.
	\end{equation}
	\endgroup
	Observe that
	\begingroup\makeatletter\def\f@size{7}\check@mathfonts
	\def\maketag@@@#1{\hbox{\m@th\normalsize\normalfont#1}}
	\begin{eqnarray}
	\frac{\partial f_{1}^{A,b}(h(\Sigma))}{\partial b_{\iota}}&=&\frac{\partial(Ah(\Sigma)+b)}{\partial b_{\iota}}=\left(\begin{array}{c}
	1_{\iota=1}\\
	\vdots\\
	1_{\iota=\ell}
	\end{array}\right),\label{eq:pa-b-comp-1}\\
	\left[\nabla_{Z}\left(f_{2}^{C,d}\circ\tilde{\sigma}\right)(Z)\right]_{\iota}
	&=&
	\left[\nabla_{Z}\left(C\tilde{\sigma}(Z)+d\right)\right]_{\iota}=\begin{pmatrix}C_{1,\iota}\sigma'(Z_{\iota})\\
	\vdots\\
	C_{nk,\iota}\sigma'(Z_{\iota})\nonumber
	\end{pmatrix}.
	\end{eqnarray}
	\endgroup
	The third order tensor $\nabla_{X}g(X)$, as an $nk$-dimensional vector of $n$-by-$k$ matrices, has for 
	$((i-1)k+j)$-th element the 
	matrix
	whose only nonzero element is a $1$ at position $(i,j)$, namely
	\begin{equation}
	\label{eq:pa-b-comp-2}
	\left[\nabla_{X}g(X)\right]_{(i-1)k+j}
	=
	\begin{pmatrix}0 & \cdots & 0\\
	\vdots & 1_{(i,j)} & \vdots\\
	0 & \cdots & 0
	\end{pmatrix}
	\in \R^{n\times k}.
	\end{equation}
	Plugging these in \eqref{eq:pa-b-comp}, we get
	\begin{equation*}
	\begin{split}
	\frac{\partial g\left(\cNt\left(h(\Sigma)\right)\right)}{\partial b_{\iota}}=\sigma'(Z_{\iota})\begin{pmatrix}C_{1,\iota} & \cdots & C_{k,\iota}\\
	\vdots &  & \vdots\\
	C_{(n-1)k+1,\iota} & \cdots & C_{nk,\iota}
	\end{pmatrix},
	\end{split}
	\end{equation*}
	so that $\frac{\partial}{\partial b_{\iota}}\tilde{\varphi}(\Theta)$ equals
	\begingroup\makeatletter\def\f@size{7}\check@mathfonts
	\def\maketag@@@#1{\hbox{\m@th\normalsize\normalfont#1}}
	\begin{align*}
	 &\sum_{i,j=1}^{n}\mu'(\omega_{i,j})\sigma'(Z_{\iota}) \times\\
	 & \quad \ 
	 \cdot\left(
	 \left[\begin{pmatrix}C_{1,\iota} & \cdots & C_{k,\iota}\\
	\vdots &  & \vdots\\
	C_{(n-1)k+1,\iota} & \cdots & C_{nk,\iota}
	\end{pmatrix}\begin{pmatrix}X_{_{1}} & \cdots & X_{(n-1)k+1}\\
	\vdots &  & \vdots\\
	X_{k} & \cdots & X_{nk}
	\end{pmatrix}\right]_{i,j}\right.
	\\
	& \quad\;\;\underbrace{\;\;\ 
	+\left.\left[\begin{pmatrix}X_{_{1}} & \cdots & X_{k}\\
	\vdots &  & \vdots\\
	X_{(n-1)k+1} & \cdots & X_{nk}
	\end{pmatrix}\begin{pmatrix}C_{1,\iota} & \cdots & C_{(n-1)k+1,\iota}\\
	\vdots &  & \vdots\\
	C_{k,\iota} & \cdots & C_{nk,\iota}
	\end{pmatrix}\right]_{i,j}\right)}_{\displaystyle\sum_{s=1}^{k}C_{(i-1)k+s,\iota}X_{(j-1)k+s}+X_{(i-1)k+s}C_{(j-1)k+s,\iota}},
	\end{align*}
	\endgroup
	and the first part is proved. For the second part, note that for every $1\le\eta\le r$ we have
	\begin{equation}\label{eq:pa-b-part2-1}
	\frac{\partial f_{1}^{A,b}(h(\Sigma))}{\partial A_{\iota,\eta}}=\frac{\partial(Ah(\Sigma)+b)}{\partial A_{\iota,\eta}}=\left(\begin{array}{c}
	[h(\Sigma)]_{\eta}1_{\iota=1}\\
	\vdots\\
	\left[h(\Sigma)\right]_{\eta}1_{\iota=\ell}
	\end{array}\right),
	\end{equation}
	which coincides with $[h(\Sigma)]_{\eta}\frac{\partial f_{1}^{A,b}(h(\Sigma))}{\partial b_{\iota}}$ by \eqref{eq:pa-b-comp-1}.
	Hence, using the same derivations as in \eqref{eq:pa-b-start} and \eqref{eq:pa-b-comp}, we get
	\begin{equation*}
	\frac{\partial\tilde{\varphi}(\Theta)}{\partial A_{\iota,\eta}}=[h(\Sigma)]_{\eta}\frac{\partial\tilde{\varphi}(\Theta)}{\partial b_{\iota}}.
	\end{equation*}
\end{proof}
%
%
%
\begin{lem}\label{lem:der-d}
	Let $\tilde{\varphi}$ be the function defined in \eqref{eq:tilde phi}, fix $1\le\alpha\le n$
	and $1\le\beta\le k$, and set $\nu:=(\alpha-1)k + \beta$. Then
	\begin{equation}\label{eq:in Lem 2}
	\frac{\partial\tilde{\varphi}(\Theta)}{\partial d_{\nu}}
	=
	2\sum_{j=1}^{n}\mu'(\omega_{\alpha,j})X_{(j-1)k+\beta}.
	\end{equation}
	Moreover, for every $1\leq \iota \leq \ell$, we have
	\[
	\frac{\partial\tilde{\varphi}(\Theta)}{\partial C_{\nu,\iota}}=Y_{\iota}\frac{\partial\tilde{\varphi}(\Theta)}{\partial d_{\nu}}.
	\]
\end{lem}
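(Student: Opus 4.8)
The plan is to proceed exactly as in the proof of Lemma~\ref{lem:der-b}: expand $\tilde\varphi$ through the chain rule, but now exploit that the parameters $C,d$ enter only through the outermost affine map $f_2^{C,d}$, so that the intermediate vector $Y=\tilde\sigma\circ f_1^{A,b}(h(\Sigma))$ is \emph{constant} with respect to $C$ and $d$. First I would record the explicit form of the quantity being differentiated: writing $M:=g(X)$ with $X=\cNt(h(\Sigma))$, one has $[MM^T]_{i,j}=\sum_{s=1}^k M_{i,s}M_{j,s}=\sum_{s=1}^k X_{(i-1)k+s}X_{(j-1)k+s}$, hence $\omega_{i,j}=\sum_{s=1}^k X_{(i-1)k+s}X_{(j-1)k+s}-\Sigma_{i,j}$. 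Also note, from $X=f_2^{C,d}(Y)=CY+d$, that $X_p=\sum_{l=1}^\ell C_{p,l}Y_l+d_p$, so $\partial X_p/\partial d_\nu=1_{p=\nu}$ and $\partial X_p/\partial C_{\nu,\iota}=Y_\iota\,1_{p=\nu}$.

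Next I would apply the chain rule to $\tilde\varphi(\Theta)=\sum_{i,j}\mu(\omega_{i,j})$, giving $\partial\tilde\varphi/\partial d_\nu=\sum_{i,j}\mu'(\omega_{i,j})\,\partial\omega_{i,j}/\partial d_\nu$, and differentiate the explicit expression for $\omega_{i,j}$: $\partial\omega_{i,j}/\partial d_\nu=\sum_{s=1}^k\big(1_{(i-1)k+s=\nu}X_{(j-1)k+s}+X_{(i-1)k+s}1_{(j-1)k+s=\nu}\big)$. The key bookkeeping observation is that, with $\nu=(\alpha-1)k+\beta$ and $1\le s,\beta\le k$, the equality $(i-1)k+s=\nu$ holds if and only if $i=\alpha$ and $s=\beta$; hence $\partial\omega_{i,j}/\partial d_\nu=1_{i=\alpha}X_{(j-1)k+\beta}+1_{j=\alpha}X_{(i-1)k+\beta}$. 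Substituting, the double sum collapses to $\sum_{j=1}^n\mu'(\omega_{\alpha,j})X_{(j-1)k+\beta}+\sum_{i=1}^n\mu'(\omega_{i,\alpha})X_{(i-1)k+\beta}$. Since $MM^T$ and $\Sigma$ are symmetric, $\omega_{i,\alpha}=\omega_{\alpha,i}$, so the two sums coincide after renaming the summation index, yielding the factor $2$ in \eqref{eq:in Lem 2}.

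For the derivative with respect to $C_{\nu,\iota}$ I would repeat the same computation verbatim, the only change being that every occurrence of $\partial X_p/\partial d_\nu=1_{p=\nu}$ is replaced by $\partial X_p/\partial C_{\nu,\iota}=Y_\iota\,1_{p=\nu}$; since $Y_\iota$ does not depend on the indices $i,j$ and can be factored out of the entire double sum, one obtains $\partial\tilde\varphi/\partial C_{\nu,\iota}=Y_\iota\,\partial\tilde\varphi/\partial d_\nu$, as claimed. There is no real obstacle here; the only thing requiring care is the flattening-index arithmetic (the identification $(i-1)k+s=\nu\iff i=\alpha,\,s=\beta$) and the use of the symmetry of $\omega$ to merge the two terms into the factor $2$ — both are routine once stated precisely.
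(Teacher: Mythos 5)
Your proposal is correct and follows essentially the same route as the paper: a chain-rule expansion of $\tilde\varphi$, the identification $(i-1)k+s=\nu\iff i=\alpha,\ s=\beta$, the symmetry $\omega_{i,\alpha}=\omega_{\alpha,i}$ to produce the factor $2$, and the observation that $Y$ is independent of $C,d$ so that the $C_{\nu,\iota}$-derivative just acquires the factor $Y_\iota$. The only difference is presentational — you work in scalar index notation where the paper uses the tensor $\nabla_X g(X)$ and explicit indicator matrices — and your index bookkeeping checks out.
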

\begin{proof}
	Note that $\frac{\partial\tilde{\varphi}(\Theta)}{\partial d_{\nu}}$ equals
	\begingroup\makeatletter\def\f@size{7}\check@mathfonts
	\def\maketag@@@#1{\hbox{\m@th\normalsize\normalfont#1}}
	\begin{equation}\label{eq:pa-d-start}
	\sum_{i,j=1}^{n}\mu'(\omega_{i,j})\left[\tfrac{\partial g\left(f_{2}^{C,d}(Y)\right)}{\partial d_{\nu}}g\left(X\right)^{T}
	+
	g\left(X\right)\left(\tfrac{\partial g\left(f_{2}^{C,d}(Y)\right)^{T}}{\partial d_{\nu}}\right)\right]_{i,j}.
	\end{equation}
	\endgroup
	As $\nu=(\alpha-1)k + \beta$, the same calculation as in \eqref{eq:pa-b-comp-1}  and  \eqref{eq:pa-b-comp-2} ensures that
		\begingroup\makeatletter\def\f@size{7}\check@mathfonts
	\def\maketag@@@#1{\hbox{\m@th\normalsize\normalfont#1}}
	\begin{equation*}
	\begin{split}
	\frac{\partial g\left(f_{2}^{C,d}(Y)\right)}{\partial d_{\nu}}&=\left[\nabla_{X}g(X)\right]\frac{\partial\left(CY+d\right)}{\partial d_{\nu}}\\
	&=\left[\nabla_{X}g(X)\right]\left(\begin{array}{c}
	1_{\nu=1}\\
	\vdots\\
	1_{\nu=nk}
	\end{array}\right)=\begin{pmatrix}0 & \cdots & 0\\
	\vdots & 1_{(\alpha,\beta)} & \vdots\\
	0 & \cdots & 0
	\end{pmatrix},
	\end{split}
	\end{equation*}
	\endgroup
	which is a $n$-by-$k$ matrix. Hence, $\frac{\partial\tilde{\varphi}(\Theta)}{\partial d_{\nu}}$ equals
		\begingroup\makeatletter\def\f@size{7}\check@mathfonts
	\def\maketag@@@#1{\hbox{\m@th\normalsize\normalfont#1}}
	\begin{align*}
    &\sum_{i,j=1}^{n}\mu'(\omega_{i,j})\times\\
	& \quad \quad \quad \quad
	\left[\begin{pmatrix}0 & \cdots & 0\\
	\vdots & 1_{(\alpha,\beta)} & \vdots\\
	0 & \cdots & 0
	\end{pmatrix}\begin{pmatrix}X_{_{1}} & \cdots & X_{(n-1)k+1}\\
	\vdots &  & \vdots\\
	X_{k} & \cdots & X_{nk}
	\end{pmatrix}\right.\\
	& \quad \quad \quad \quad \quad
	\left. +
	\begin{pmatrix}X_{_{1}} & \cdots & X_{k}\\
	\vdots &  & \vdots\\
	X_{(n-1)k+1} & \cdots & X_{nk}
	\end{pmatrix}\begin{pmatrix}0 & \cdots & 0\\
	\vdots & 1_{(\beta,\alpha)} & \vdots\\
	0 & \cdots & 0
	\end{pmatrix}\right]_{i,j}\\
	& =\sum_{i,j=1}^{n}\mu'(\omega_{i,j})\times\\
	& \quad 
	\left[\underbrace{\begin{pmatrix}0 & \cdots & 0\\
		X_{\beta} & \cdots & X_{(n-1)k+\beta}\\
		0 & \cdots & 0
		\end{pmatrix}}_{\textrm{line}\;\alpha}+\underbrace{\begin{pmatrix}0 & \cdots & X_{\beta} & \cdots & 0\\
		\vdots &  & \vdots &  & \vdots\\
		0 & \cdots & X_{(n-1)k+\beta} & \cdots & 0
		\end{pmatrix}}_{\textrm{column}\;\alpha}\right]_{i,j}.
	\end{align*}
	\endgroup
	Since  $\omega_{i,j}=\omega_{j,i}$ for every $1\le i,j\le n$ we finally get
		\begingroup\makeatletter\def\f@size{7}\check@mathfonts
	\def\maketag@@@#1{\hbox{\m@th\normalsize\normalfont#1}}
	\begin{eqnarray*}
	\frac{\partial\tilde{\varphi}(\Theta)}{\partial d_{\nu}}&=& 2\mu'\!(\omega_{\alpha,\alpha}\!)X_{(\alpha-1)k+\beta}\\ 
	&& + \sum_{\substack{j=1\\
			j\neq\alpha
		}
	}^{n}\mu'\!(\omega_{\alpha,j}\!)X_{(j-1)k+\beta}
\!+\!\sum_{\substack{i=1\\
			i\neq\alpha
		}
	}^{n}\mu'\!(\omega_{i,\alpha}\!)X_{(i-1)k+\beta}\\ 
	&=&2\sum_{j=1}^{n}\mu'(\omega_{\alpha,j})X_{(j-1)k+\beta},
	\end{eqnarray*}
	\endgroup
	which proves the first part. For the second part, 
	we fix $1\le\iota\le\ell$. In the same manner as we established \eqref{eq:pa-b-part2-1}, we can use \eqref{eq:pa-b-comp-2} to see that
	\begin{eqnarray*}
	&&\frac{\partial g\left(f_{2}^{C,d}(Y)\right)}{\partial C_{\nu,\iota}}
	=
	[\nabla_{X}g(X)]\frac{\partial (CY+d)}{\partial C_{\nu,\iota}}\\
	&=&
	[\nabla_{X}g(X)]Y_{\iota}\left(\begin{array}{c}
	1_{\nu=1}\\
	\vdots\\
	1_{\nu=nk}
	\end{array}\right)=Y_{\iota}\frac{\partial g\left(f_{2}^{C,d}(Y)\right)}{\partial b_{\nu}},
	\end{eqnarray*}
	and thereby, with \eqref{eq:pa-d-start}, to conclude that
	\[
	\frac{\partial\tilde{\varphi}(\Theta)}{\partial C_{\nu,\iota}}=Y_{\iota}\frac{\partial\tilde{\varphi}(\Theta)}{\partial d_{\nu}}.
	\]
\end{proof}

One of the key tools in the derivation of bounds on the Lipschitz constant of $\tilde\varphi$ is the following elementary lemma. It shows how to infer the Lipschitz constant of some functions from the Lipschitz constant of other, simpler Lipschitz-continuous functions.

\begin{lem}\label{lem:Gen-Lip}
	Let $\phi_{1}:\mathbb{R}^{m}\to\mathbb{R}$ be Lipschitz continuous with Lipschitz constant $L_1$.
	Let $\phi_{2}:\mathbb{R}^{n}\to\mathbb{R}^m$ be a function for which there exists a some $L_2: \R^n\times \R^n \to (0,\infty)$ such that for all $x,y \in \R^n$ we have
	\begin{equation}\label{eq:Varying_Lipschitz_Constant}
	\|\phi_2(x)-\phi_2(y)\|\leq L_2(x,y) \|x-y\|.
	\end{equation}
	Assume that $\phi_1\circ\phi_2$ is bounded by a constant $B_{12}$.
	Finally, let  $\phi_{3}:\mathbb{R}^{n}\to\mathbb{R}$ be a function for which 
	\begin{enumerate} \item $|\phi_3(y)|\leq B_3(y)$  for all $y \in \R ^n$ for some positive function $B_3\colon\R^n \to (0,\infty)$;
		
		\item there exist three functions $L_{31}: \R^n\times \R^n \to (0,\infty)$, $L_{32}: \R^n\times \R^n \to (0,\infty)$, and  $f:\R^n \to \R^p$ such that for all $x,y \in \R^n$ we have
		\begingroup\makeatletter\def\f@size{7}\check@mathfonts
		\def\maketag@@@#1{\hbox{\m@th\normalsize\normalfont#1}}
		\begin{equation*}
		\|\phi_3(x)-\phi_3(y)\|\leq L_{31}(x,y) \|x-y\| +L_{32}(x,y)\|f(x)-f(y)\|.
		\end{equation*}
\endgroup	
\end{enumerate}
	Then, for every $x,y \in \R^n$, the function $\Phi:=(\phi_1 \circ \phi_2)\phi_3$ verifies
	\begingroup\makeatletter\def\f@size{7}\check@mathfonts
	\def\maketag@@@#1{\hbox{\m@th\normalsize\normalfont#1}}
	\begin{eqnarray*}
	|\Phi(x)-\Phi(y)|
	&\leq& \left(B_{12} L_{31}(x,y)+ B_3(y) L_1 L_2(x,y)\right) \|x-y\|\\
	 && +B_{12}L_{32}(x,y)\|f(x)-f(y)\|.
	\end{eqnarray*}
	\endgroup
\end{lem}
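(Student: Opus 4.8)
The plan is to telescope the difference $\Phi(x) - \Phi(y) = (\phi_1\circ\phi_2)(x)\,\phi_3(x) - (\phi_1\circ\phi_2)(y)\,\phi_3(y)$ in the standard way for a product: insert and subtract a mixed term. Writing $g := \phi_1\circ\phi_2$ for brevity, I would split
\begin{equation*}
\Phi(x) - \Phi(y) = g(x)\big(\phi_3(x) - \phi_3(y)\big) + \phi_3(y)\big(g(x) - g(y)\big).
\end{equation*}
Then I apply the triangle inequality and bound each of the two pieces separately using the hypotheses that are tailored for exactly this purpose.

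For the first piece, $|g(x)|\le B_{12}$ by assumption, and $|\phi_3(x)-\phi_3(y)|\le L_{31}(x,y)\|x-y\| + L_{32}(x,y)\|f(x)-f(y)\|$ by hypothesis (2); multiplying gives
\begin{equation*}
|g(x)|\cdot|\phi_3(x)-\phi_3(y)| \le B_{12}L_{31}(x,y)\|x-y\| + B_{12}L_{32}(x,y)\|f(x)-f(y)\|.
\end{equation*}
For the second piece, $|\phi_3(y)|\le B_3(y)$ by hypothesis (1), and since $\phi_1$ is $L_1$-Lipschitz we have $|g(x)-g(y)| = |\phi_1(\phi_2(x)) - \phi_1(\phi_2(y))| \le L_1\|\phi_2(x)-\phi_2(y)\| \le L_1 L_2(x,y)\|x-y\|$, where the last step uses \eqref{eq:Varying_Lipschitz_Constant}. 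Hence $|\phi_3(y)|\cdot|g(x)-g(y)| \le B_3(y)L_1 L_2(x,y)\|x-y\|$. Adding the two bounds and collecting the coefficients of $\|x-y\|$ versus $\|f(x)-f(y)\|$ yields exactly the claimed inequality.

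There is no genuine obstacle here; the lemma is an elementary bookkeeping statement whose only subtlety is keeping track of which quantity is bounded pointwise ($g$ via $B_{12}$, $\phi_3$ via $B_3$) versus which is controlled by a (possibly point-dependent) Lipschitz modulus, and making sure the composition bound for $\phi_1\circ\phi_2$ is chained correctly through \eqref{eq:Varying_Lipschitz_Constant}. The mild care needed is that the Lipschitz "constants" $L_2, L_{31}, L_{32}$ are functions of $(x,y)$ rather than true constants, so one must not absorb them prematurely; they are simply carried along verbatim into the final estimate. The assumption that $\phi_1\circ\phi_2$ is bounded (rather than $\phi_1$ alone) is what makes the first-piece estimate work without any growth control on $\phi_2$ itself, which will matter when this lemma is later applied with $\phi_2$ built from the neural network.
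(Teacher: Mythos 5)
Your proposal is correct and uses exactly the same decomposition as the paper: inserting the mixed term $\phi_1(\phi_2(x))\phi_3(y)$, bounding the first piece with $B_{12}$ and hypothesis (2), and the second with $B_3(y)$, the Lipschitz continuity of $\phi_1$, and \eqref{eq:Varying_Lipschitz_Constant}. Nothing is missing.
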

\begin{proof}
	For all $x,y\in\mathbb{R}^{n}$, we can write
	\begingroup\makeatletter\def\f@size{7}\check@mathfonts
	\def\maketag@@@#1{\hbox{\m@th\normalsize\normalfont#1}}
	\begin{equation*}
	\begin{split}
	&|\Phi(x)-\Phi(y)|=
	\left|\phi_{1}\left(\phi_{2}(x)\right)\phi_{3}(x)-\phi_{1}\left(\phi_{2}(y)\right)\phi_{3}(y)\right|\\
	& \leq  \left|\phi_{1}\left(\phi_{2}(x)\right)\left(\phi_{3}(x)-\phi_{3}(y)\right)\right|+\left|\left(\phi_{1}\left(\phi_{2}(x)\right)-\phi_{1}\left(\phi_{2}(y)\right)\right)\phi_{3}(y)\right|\\
	&\leq B_{12}\left|\phi_{3}(x)-\phi_{3}(y)\right|+B_{3}(y)\left|\phi_{1}\left(\phi_{2}(x)\right)-\phi_{1}\left(\phi_{2}(y)\right)\right|\\
	& \leq B_{12}\big(L_{31}(x,y) \|x\!-\!y\| \!+\!L_{32}(x,y)\|f(x)\!-\!f(y)\|\big)\\
	&\quad+\!B_{3}L_{1}\left\Vert \phi_{2}(x)\!-\!\phi_{2}(y)\right\Vert.
	\end{split}
	\end{equation*}
	\endgroup	
	It remains to use \eqref{eq:Varying_Lipschitz_Constant} on the last term to conclude.
\end{proof}

As defined in \eqref{eq:norm_definition}, the norm we use for a finite list of matrix of different sizes is the Euclidean norm of the vector constituted by their Frobenius norms.
For symmetric matrices $X$, we also use a dedicated norm defined
as $||X||_{S}:=||h(X)||$, where $h:\mathbb S^n \to \R^{n(n+1)/2}$ is the function defined in \eqref{eq:def-h}. Note also that $2||X||_{S}^{2}=||X||_{F}^{2}+||\textrm{diag}(X)||_{F}^{2}$.
%
%

The next lemma is a key step in determining a bound on the smoothness constant of our objective function $\tilde\varphi$.
\begin{lem}\label{lem:Lip-b}
	Let  $\tilde{\varphi}$ be the function defined in \eqref{eq:tilde phi} and let $D>0$ be any constant. We define
	\begingroup\makeatletter\def\f@size{7}\check@mathfonts
	\def\maketag@@@#1{\hbox{\m@th\normalsize\normalfont#1}}
	\begin{equation*}
	\mathcal{D}:=\big\{(A,b,C,d) \in \R^{l\times r} \times \R^\ell \times \R^{nk \times \ell} \times \R^{nk} \colon \|(A,b,C,d)\|\leq D\big\}.
	\end{equation*}
	\endgroup
	The function 
	\[
	\Theta=(A,d,C,d) \mapsto \frac{\partial \tilde \varphi(\Theta)}{\partial b} \in \R^\ell
	\]
	is Lipschitz continuous on $\mathcal{D}$ with a constant $L_b$ for which
	\begin{equation*}
	L^2_b = \mathcal{C}_bn^2D^2  \max\!\left\{\ell D^2L^2_Z, \ell^2D^6L_Z^2, \ell^3D^4,n\ell\right\},
	\end{equation*}
	where $L_Z:=\sqrt{1 + \|\Sigma\|_S^2}$ and $\mathcal{C}_b$ is a constant that only depends polynomially on  $\sigma'_{\max},\sigma''_{\max},\mu''_{\max}$.
\end{lem}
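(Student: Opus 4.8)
The plan is to start from the closed-form expression for $\partial\tilde\varphi/\partial b_\iota$ obtained in Lemma~\ref{lem:der-b}, to rewrite the whole gradient $\nabla_b\tilde\varphi\in\R^{\ell}$ in a compact trace form, and then to bound its Lipschitz modulus on $\mathcal D$ by propagating, via Lemma~\ref{lem:Gen-Lip}, uniform size bounds and Lipschitz estimates of the elementary building blocks $Z$, $X$, and $M':=[\mu'(\omega_{i,j})]_{1\le i,j\le n}$. For $1\le\iota\le\ell$ let $C^{(\iota)}\in\R^{n\times k}$ be the slice of $C$ with entries $C^{(\iota)}_{i,s}:=C_{(i-1)k+s,\iota}$, so that $\sum_{\iota=1}^{\ell}\|C^{(\iota)}\|_F^2=\|C\|_F^2$. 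Using the symmetry $\omega_{i,j}=\omega_{j,i}$, Lemma~\ref{lem:der-b} recasts as
\[
\frac{\partial\tilde\varphi(\Theta)}{\partial b_\iota}=2\,\sigma'(Z_\iota)\,\tr\!\big((C^{(\iota)})^{T}M'\,g(X)\big)=:\sigma'(Z_\iota)\,R_\iota(\Theta).
\]

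First I would record the uniform bounds on $\mathcal D$: since $\sigma$ takes values in $[-1,1]$, $\|Y\|\le\sqrt{\ell}$, hence $\|X\|=\|CY+d\|\le 2D\sqrt{\ell}$ and $\|g(X)\|_F\le 2D\sqrt{\ell}$; also $\|C^{(\iota)}\|_F\le D$, and since all entries of $M'$ lie in $[-1,1]$ one has $\|M'\|_F\le n$ and in particular $\|M'\|_{\mathrm{op}}\le n$; consequently $|R_\iota|\le 4nD^2\sqrt{\ell}$. Then I would record the Lipschitz estimates of the building blocks with respect to the norm~\eqref{eq:norm_definition}: $\Theta\mapsto Z=Ah(\Sigma)+b$ is $L_Z$-Lipschitz with $L_Z=\sqrt{1+\|\Sigma\|_S^2}$ (from $\|(A-A')h(\Sigma)+(b-b')\|\le\sqrt{\|h(\Sigma)\|^2+1}\,\|(A-A',b-b')\|$); hence $\Theta\mapsto Y=\tilde\sigma(Z)$ is $\sigma'_{\max}L_Z$-Lipschitz, $\Theta\mapsto X=CY+d$ is $L_X$-Lipschitz with $L_X=O(\sqrt{\ell}+DL_Z)$, $\Theta\mapsto g(X)g(X)^{T}$ is $O(D\sqrt{\ell}\,L_X)$-Lipschitz in Frobenius norm (using $\|AA^{T}-BB^{T}\|_F\le(\|A\|_F+\|B\|_F)\|A-B\|_F$), so that $\Theta\mapsto M'$ is $O(\mu''_{\max}D\sqrt{\ell}\,L_X)$-Lipschitz in Frobenius norm, while each $\Theta\mapsto\sigma'(Z_\iota)$ is $\sigma''_{\max}L_Z$-Lipschitz. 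These are precisely the point-dependent Lipschitz ingredients --- together with the uniformly bounded composed factors $\mu'\circ\omega_{i,j}$ and $\sigma'\circ Z_\iota$ playing the role of $B_{12}$ --- that Lemma~\ref{lem:Gen-Lip} is designed to combine.

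Then I would estimate, for $x,y\in\mathcal D$,
\[
\|\nabla_b\tilde\varphi(x)-\nabla_b\tilde\varphi(y)\|^2=\sum_{\iota}\big(\sigma'(Z_\iota(x))R_\iota(x)-\sigma'(Z_\iota(y))R_\iota(y)\big)^2,
\]
splitting each summand into $\sigma'(Z_\iota(x))(R_\iota(x)-R_\iota(y))$ and $(\sigma'(Z_\iota(x))-\sigma'(Z_\iota(y)))R_\iota(y)$. The second contribution is bounded at once by $(4nD^2\sqrt{\ell})^2(\sigma''_{\max})^2\|Z(x)-Z(y)\|^2=O(n^2D^4\ell L_Z^2)\|x-y\|^2$, i.e.\ the term $n^2D^2\cdot\ell D^2L_Z^2$ of the claimed $L_b^2$. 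For the first contribution I would telescope
\begin{align*}
\tfrac{1}{2}\big(R_\iota(x)-R_\iota(y)\big)
={}&\tr\!\big((C^{(\iota)}(x)-C^{(\iota)}(y))^{T}M'(x)\,g(X(x))\big)\\
&{}+\tr\!\big((C^{(\iota)}(y))^{T}(M'(x)-M'(y))\,g(X(x))\big)\\
&{}+\tr\!\big((C^{(\iota)}(y))^{T}M'(y)\,(g(X(x))-g(X(y)))\big)
\end{align*}
and sum the squares over $\iota$. The crucial point --- and the reason the final bound does \emph{not} acquire a spurious factor $\ell$ from the coordinatewise summation --- is that in each of these three terms the $n\times k$ matrix contracted against $(C^{(\iota)})^{T}$ under the trace is independent of $\iota$, so Cauchy--Schwarz factors the sum over $\iota$ into $\big(\sum_\iota\|C^{(\iota)}_{\bullet}\|_F^2\big)$ times the squared Frobenius norm of a fixed matrix, and $\sum_\iota\|C^{(\iota)}(x)-C^{(\iota)}(y)\|_F^2=\|C(x)-C(y)\|_F^2\le\|x-y\|^2$ while $\sum_\iota\|C^{(\iota)}(y)\|_F^2=\|C(y)\|_F^2\le D^2$; for those fixed matrices one bounds $\|M'(\cdot)\,g(X(\cdot))\|_F\le\|M'(\cdot)\|_{\mathrm{op}}\|g(X(\cdot))\|_F$, and likewise for the differences. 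Plugging in the size and Lipschitz bounds above, with $L_X^2=O(\ell+D^2L_Z^2)$, gives $\sum_\iota(R_\iota(x)-R_\iota(y))^2=O\big(n^2D^2\ell+D^6\ell^3+D^8\ell^2L_Z^2+n^2D^4L_Z^2\big)\|x-y\|^2$. Adding the two contributions and absorbing the remaining numerical and activation-dependent factors into $\mathcal C_b$, every resulting term is dominated by $\mathcal C_b\,n^2D^2\max\{\ell D^2L_Z^2,\ell^2D^6L_Z^2,\ell^3D^4,n\ell\}$, which is the asserted value of $L_b^2$.

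I expect the main obstacle to be the bookkeeping of the exponents of $n$, $\ell$, and $D$: a naive route --- applying Lemma~\ref{lem:Gen-Lip} summand by summand over the $n^2$ pairs $(i,j)$, then a triangle inequality, then $\|v\|\le\sqrt{\ell}\,\|v\|_\infty$ over the $\ell$ neurons --- would overcount by a factor of roughly $n\ell$. One therefore has to (i) keep the sum $\sum_{i,j}$ inside the trace and control the contracted object $M'\,g(X)$ by $\|M'\|_{\mathrm{op}}\|g(X)\|_F$ rather than estimating each summand separately, and (ii) exploit that the per-neuron weight slices $C^{(\iota)}$ are orthogonal blocks of $C$, so that summing their squared Frobenius norms returns $\|C\|_F^2$ and not $\ell\|C\|_F^2$. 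Getting these two reductions right, and treating the variation of $\sigma'(Z_\iota)$ separately from that of $R_\iota$, is the heart of the argument; everything else is the routine propagation of Lipschitz constants through sums, products, and compositions supplied by Lemma~\ref{lem:Gen-Lip}.
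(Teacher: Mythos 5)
Your proposal is correct, and it reaches the stated bound by a genuinely different organization of the estimate than the paper's own proof. The paper works entrywise: it writes $\frac{\partial\tilde\varphi}{\partial b_\iota}=\sum_{i,j}\Upsilon_{i,j,\iota}$ with $\Upsilon_{i,j,\iota}=\mu'(\omega_{i,j})\sigma'(Z_\iota)(V_{i,j,\iota}+V_{j,i,\iota})$, applies Lemma~\ref{lem:Gen-Lip} to each of the $n^2$ summands separately (Steps 5--8), pays a Cauchy--Schwarz factor $6n^2$ when recombining, and then collapses the resulting sums of per-entry constants through identities such as $\sum_{\iota}\sum_i\|g(C_{:,\iota})_{i,:}\|^2=\|C\|^2$. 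You instead keep the double sum over $(i,j)$ inside a trace, $\frac{\partial\tilde\varphi}{\partial b_\iota}=2\sigma'(Z_\iota)\tr\!\big((C^{(\iota)})^TM'g(X)\big)$ with $M'=[\mu'(\omega_{i,j})]$, telescope the difference in three matrix factors, and control the contracted objects by $\|M'\|_{\mathrm{op}}\le\|M'\|_F\le n$ together with $\|M'(x)-M'(y)\|_F\le\mu''_{\max}\|\omega(x)-\omega(y)\|_F$ and the block-orthogonality $\sum_\iota\|C^{(\iota)}\|_F^2=\|C\|_F^2$; the elementary ingredients ($L_Z$, the Lipschitz modulus and size of $X$, the rank-one update bound for $g(X)g(X)^T$) are the same as the paper's Steps 1--5. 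Your route is shorter and arguably cleaner, and in fact yields slightly sharper powers of $n$ in two of the four regimes (e.g.\ $n^2D^2\ell$ in place of $n^3D^2\ell$, and $D^8\ell^2L_Z^2$ in place of $n^2D^8\ell^2L_Z^2$), all of which are dominated by the stated $\max$, so the lemma as claimed follows. The paper's entrywise bookkeeping is more laborious here, but it is the form that is reused verbatim in Lemmas~\ref{lem:Lip-d} and in the inductive multi-layer argument of Theorem~\ref{thm:convergence}, which is presumably why the authors carry the per-index constants $L_{\omega,i,j}$, $L_{CdX,i,j,\iota}$, $B_{CX,i,j,\iota}$ explicitly rather than absorbing them into matrix norms as you do.
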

\begin{proof}
	We divide the proof into several steps, each of which establishing that some function is bounded and/or Lipschitz continuous. We set $\Theta=(A,b,C,d)$ and $\bar\Theta = (\bar A,\bar b,\bar C, \bar d)$ in $\mathcal{D}$. We make use abundantly of the shorthand notation defined in \eqref{shortcut-notation}, adopting the notation $\bar\omega_{ij}$, $\bar X$, etc\ldots when $\Theta$ is replaced by $\bar\Theta$.

		\noindent \textbf{Step 1.} Let $1\le\iota\le\ell$. In this first step, we focus on $Z_\iota = \left[f_1^{A,b}(h(\Sigma))\right]_\iota$. The Cauchy-Schwarz Inequality ensures that for every $(A,b), (\bar{A},\bar{b}) \in \R^{\ell\times r} \times \R^\ell$ we have
	\begingroup\makeatletter\def\f@size{9}\check@mathfonts
	\def\maketag@@@#1{\hbox{\m@th\normalsize\normalfont#1}}
		\begin{equation*}
		\begin{split}
		&\left(Z_{\iota}-\bar Z_{\iota}\right)^{2} =
		\Big((A_{\iota,:}-\bar{A}_{\iota,:})h(\Sigma)+b_{\iota}-\bar{b}_{\iota}\Big)^{2}\\
		&\leq
		\left\Vert \begin{array}{c}
		A_{\iota,:}-\bar{A}_{\iota,:}\\
		b_{\iota}-\bar{b}_{\iota}
		\end{array}\right\Vert ^{2}\left\Vert \begin{array}{c}
		h(\Sigma)\\
		1
		\end{array}\right\Vert ^{2} = L_Z^2 ||({A}_{\iota,:},{b}_{\iota})-(\bar {A}_{\iota,:},\bar{b}_{\iota})||^2.
		\end{split}
		\end{equation*}
		\endgroup

        \noindent \textbf{Step 2.} Let us apply Lemma \ref{lem:Gen-Lip} with 
		$\phi_{1}\equiv\sigma$,
		$\phi_{2}\equiv\left[f_{1}^{\cdot,\cdot}(h(\Sigma))\right]_{\iota}$,
		and $\phi_{3}\equiv 1$. We have immediately $L_{31}=L_{32}=0$, $B_{3}=1$, $L_{1}=\sigma'_{\max}$, and, by Step $1$, $L_{2}=L_{Z}$. Therefore,
	\begingroup\makeatletter\def\f@size{9}\check@mathfonts
	\def\maketag@@@#1{\hbox{\m@th\normalsize\normalfont#1}}
		\begin{equation*}
		\left|\sigma\left(Z_{\iota}\right)-\sigma\left(\bar Z_{\iota}\right)\right|\leq \sigma'_{\max} L_Z ||({A}_{\iota,:},{b}_{\iota})-(\bar {A}_{\iota,:},\bar{b}_{\iota})||.
		\end{equation*}
		\endgroup
Note also that $\|\tilde{\sigma}(\bar Z)\|\le\sqrt\ell$ because $|\sigma\left(\bar Z_{\iota}\right)|\le 1$ by assumption on $\sigma$.

		\noindent \textbf{Step 3.} Similarly, we get 
	\begingroup\makeatletter\def\f@size{9}\check@mathfonts
	\def\maketag@@@#1{\hbox{\m@th\normalsize\normalfont#1}}
		\begin{equation*}
		\left|\sigma'\left(Z_{\iota}\right)-\sigma'\left(\bar Z_{\iota}\right)\right|\leq \sigma''_{\max} L_Z ||({A}_{\iota,:},{b}_{\iota})-(\bar {A}_{\iota,:},\bar{b}_{\iota})||.
		\end{equation*}
		\endgroup
		Also, $\sigma'\left(Z_{\iota}\right)\le \sigma'_{\max}$ by assumption.

		\noindent \textbf{Step 4.} Let us fix $1\le\nu\le nk$. We focus in this step on $X_\nu = \left[f_{2}^{{C,}{d}}\circ\tilde{\sigma}\circ f_{1}^{{A},{b}}(h(\Sigma))\right]_{\nu}=\left[f_{2}^{{C,}{d}}(Y)\right]_{\nu}$, with $Y = \tilde\sigma(Z) = \tilde{\sigma}\circ f_{1}^{{A},{b}}(h(\Sigma))$. Observe first that
		\begingroup\makeatletter\def\f@size{9}\check@mathfonts
		\def\maketag@@@#1{\hbox{\m@th\normalsize\normalfont#1}}
		\begin{align*}
		&
		\left|X_{\nu}
		-\bar X_{\nu}\right| = \left|{C}_{\nu,:}{Y}+{d}_{\nu}-\bar{C}_{\nu,:}{\bar Y}-\bar{d}_{\nu}\right|\\
		&\le \sum_{\iota=1}^{\ell}{|C}_{\nu,\iota}|\cdot|Y_{\iota}-\bar{Y}_{\iota}|+\left\|C_{\nu,:}-\bar{C}_{\nu,:}\right\|\cdot\left\|\bar Y\right\|+\left|d_{\nu}-\bar{d}_{\nu}\right|\\
		&\le \sum_{\iota=1}^{\ell}|{C}_{\nu,\iota}|\sigma'_{\max}L_{Z}\cdot\left\Vert 
		\begin{array}{c}
		A_{\iota,:}-\bar{A}_{\iota,:} \nonumber\\
		b_{\iota}-\bar{b}_{\iota}
		\end{array}\right\Vert\\
		&\;\;+\left\|C_{\nu,:}-\bar{C}_{\nu,:}\right\|\left \|\bar Y\right\|+\left|d_{\nu}-\bar{d}_{\nu}\right|\qquad\textrm{(by Step $2$).}
        \end{align*}
        \endgroup
        Squaring both sides of this inequality and using Cauchy-Schwarz, we get
		\begingroup\makeatletter\def\f@size{9}\check@mathfonts
		\def\maketag@@@#1{\hbox{\m@th\normalsize\normalfont#1}}
		\begin{align}
		& \left(X_{\nu}
		-\bar X_{\nu}\right)^2\nonumber\\
		& \leq  \left(\left(\sum_{\iota=1}^{\ell}{C}_{\nu,\iota}^{2}\right)\left(\sigma'_{\max}\right)^{2}L_{Z}^{2}+||\bar Y||^{2}+1\right) \times\nonumber\\
		& \;\;\;\;\;\;\Big(||A-\bar{A}||^{2}+||b-\bar{b}||^{2}+||C_{\nu,:}-\bar{C}_{\nu,:}||^{2}+\left|d_{\nu}-\bar{d}_{\nu}\right|^{2}\Big)
		\nonumber\\
		& \le  L^2_\nu\left\|(A,b,C_{\nu,:},d_\nu)-(\bar A,\bar b,\bar C_{\nu,:}, \bar d_\nu)\right\|^2. \label{step-4-Lip}
		\end{align}
		\endgroup
		with 
        \begingroup\makeatletter\def\f@size{9}\check@mathfonts
		\def\maketag@@@#1{\hbox{\m@th\normalsize\normalfont#1}}
		\begin{equation}\label{eq:L_nu}
		L_\nu^2:=\left(||{C}_{\nu,:}||\sigma'_{\max}L_{Z}\right)^{2}+\ell+1.
        \end{equation}
            \endgroup
            This last inequality is also ensured by Step $2$, where we showed that $||\bar Y||\le \sqrt\ell$.
 
		In addition to providing an estimate for its Lipschitz constant, we can also obtain a bound for this function. From Step $2$, we get
		\begin{align}
		\left|X_{\nu}\right|&=\left|{C}_{\nu,:}{Y}+{d}_{\nu}\right|
		\leq
		||{C}_{\nu,:}||\sqrt{\ell}+|{d}_{\nu}|\nonumber\\
		&\le
		\sqrt{\ell+1}\cdot\sqrt{||{C}_{\nu,:}||^{2}+|{d}_{\nu}|^{2}}. \label{step-4-bound}
		\end{align}
We obtain with this step the Lipschitz constant of one output of a single-layer neural network. Would our objective function be the standard least square loss function to minimize when training a neural network, the remaining of our task would have been vastly simpler. The specific intricacies of our objective function \eqref{eq:tilde phi} ask for a more involved analysis.

        \noindent \textbf{Step 5.} Let $1\le i,j\le n$. In this step, we deal with the function
        \begingroup\makeatletter\def\f@size{9}\check@mathfonts
		\def\maketag@@@#1{\hbox{\m@th\normalsize\normalfont#1}}
        \begin{equation*}
        \omega_{i,j}:=\left[g\left(X\right)g\left(X\right)^{T}-\Sigma\right]_{i,j}.
        \end{equation*} 
        \endgroup
		We have
		\begingroup\makeatletter\def\f@size{7}\check@mathfonts
		\def\maketag@@@#1{\hbox{\m@th\normalsize\normalfont#1}}
		\begin{align}
		& \big|\omega_{i,j}-\bar{\omega}_{i,j}\big|^{2} = \left[g\left(X\right)g\left(X\right)^{T}-g\left(\bar X\right)g\left(\bar X\right)^{T}\right]^2_{i,j}\nonumber\\
		& \le  \left(\sum_{s=1}^{k}\Big[\left|X_{(i-1)k+s}\right|\cdot\left|X_{(j-1)k+s}-\bar{X}_{(j-1)k+s}\right|\right.\nonumber\\
		& \quad \quad \quad +\left|\bar{X}_{(j-1)k+s}\right|\cdot\left|X_{(i-1)k+s}-\bar{X}_{(i-1)k+s}\right|\Big]\Bigg)^{2}\label{eq:Step 5 initial}\\
		& \le  \;2\left(\sum_{s=1}^{k}X_{(i-1)k+s}^{2}\right)\cdot\left(\sum_{s=1}^{k}|X_{(j-1)k+s}-\bar{X}_{(j-1)k+s}|^{2}\right)\nonumber\\ &\quad
		+2\left(\sum_{s=1}^{k}\bar{X}_{(j-1)k+s}^{2}\right)\cdot\left(\sum_{s=1}^{k}|X_{(i-1)k+s}-\bar{X}_{(i-1)k+s}|^{2}\right).\nonumber
		\end{align}
		\endgroup
		Let us bound these four sums using \eqref{step-4-Lip} and \eqref{step-4-bound}.
		First, 
			\begingroup\makeatletter\def\f@size{7}\check@mathfonts		\def\maketag@@@#1{\hbox{\m@th\normalsize\normalfont#1}}
		\begin{equation*}
		 \sum_{s=1}^{k}X_{(i-1)k+s}^{2}\le (\ell +1)\left(\sum_{s=1}^{k}\left(\|C_{(i-1)k+s,:}\|^2 +d_{(i-1)k+s}^2\right)\right).
		\end{equation*}
		\endgroup
		The sum on the right-hand side can be conveniently rewritten using the vector-to-matrix operator $g$.
        \begingroup\makeatletter\def\f@size{7}\check@mathfonts
		\def\maketag@@@#1{\hbox{\m@th\normalsize\normalfont#1}}
		\begin{equation}
		\sum_{s=1}^{k}\|C_{(i-1)k+s,:}\|^2
		= \sum_{s=1}^{k} \sum_{\iota=1}^\ell C_{(i-1)k+s,\iota}^2=\sum_{\iota=1}^\ell \| g(C_{:,\iota})_{i,:}\|^2.\label{M-transf}
		\end{equation}
		\endgroup
		Similarly, we can write $\sum_{s=1}^{k}\|d_{(i-1)k+s}\|^2 = \|g(d)_{i,:}\|^2$, so that 
        \begingroup\makeatletter\def\f@size{9}\check@mathfonts
		\def\maketag@@@#1{\hbox{\m@th\normalsize\normalfont#1}}
		\begin{equation*}
		 \sum_{s=1}^{k}X_{(i-1)k+s}^{2}\le (\ell +1)\left(\sum_{\iota=1}^\ell \| g(C_{:,\iota})_{i,:}\|^2 +\|g(d)_{i,:}\|^2\right).
		\end{equation*}
		\endgroup
        Second, with $ \Theta_{I(\nu)}:=(A,b,C_{\nu,:},d_\nu)$ and writing $\nu(s):=(j-1)k+s$ for $1\le s\le k$, we have
			\begingroup\makeatletter\def\f@size{9}\check@mathfonts		\def\maketag@@@#1{\hbox{\m@th\normalsize\normalfont#1}}
        \begin{equation*}
		 \sum_{s=1}^{k}|X_{\nu(s)}-\bar{X}_{\nu(s)}|^{2}\le\sum_{s=1}^{k}L^2_{\nu(s)}\left\|\Theta_{I(\nu(s))}-\bar\Theta_{I(\nu(s))}\right\|^2.
		\end{equation*}
		\endgroup
		We can use the crude bound 
			\begingroup\makeatletter\def\f@size{9}\check@mathfonts		\def\maketag@@@#1{\hbox{\m@th\normalsize\normalfont#1}}
        \begin{equation*}
		 \sum_{s=1}^{k}\left\|\Theta_{I(\nu(s))}-\bar\Theta_{I(\nu(s))}\right\|^2\le \left\|\Theta-\bar\Theta\right\|^2
		\end{equation*}
		\endgroup
		to get
			\begingroup\makeatletter\def\f@size{9}\check@mathfonts		\def\maketag@@@#1{\hbox{\m@th\normalsize\normalfont#1}}
        \begin{align*}
&\sum_{s=1}^{k}L^2_{\nu(s)}\left\|\Theta_{I(\nu(s))}-\bar\Theta_{I(\nu(s))}\right\|^2\\
&\le \left(\left(\sigma'_{\max}L_{Z}\right)^{2}\sum_{s=1}^{k}||{C}_{\nu(s),:}||^{2}+\ell+1\right)\left\|\Theta-\bar\Theta\right\|^2\\
&=\left(\left(\sigma'_{\max}L_{Z}\right)^{2}\sum_{\iota=1}^\ell \| g(C_{:,\iota})_{j,:}\|^2+\ell+1\right)\left\|\Theta-\bar\Theta\right\|^2
\end{align*}
		\endgroup
		by \eqref{M-transf}. We can plug our two estimates into our bound for $|\omega_{i,j}-\bar{\omega}_{i,j}|^{2}$ to get
        \begingroup\makeatletter\def\f@size{9}\check@mathfonts
		\def\maketag@@@#1{\hbox{\m@th\normalsize\normalfont#1}}
		\begin{equation*}
		|\omega_{i,j}-\bar{\omega}_{i,j}| \le L_{\omega,i,j}\left\|\Theta-\bar\Theta\right\|
		\end{equation*}
		\endgroup
with 
        \begingroup\makeatletter\def\f@size{7}\check@mathfonts
		\def\maketag@@@#1{\hbox{\m@th\normalsize\normalfont#1}}
		\begin{align}
		 &\left.L_{\omega,i,j}^2\middle/\left(2\left(\ell+1\right)\left(\sigma'_{\max}L_{Z}\right)^{2}\right)\right.\label{step-5-Lip}\\
		 &=\left(\sum_{\iota=1}^{\ell}||g({C}_{:,\iota})_{i,:}||^{2}+||g({d})_{i,:}||^{2}\right)\left(\sum_{\iota=1}^{\ell}||g({C}_{:,\iota})_{j,:}||^{2}+\kappa\right)\nonumber\\
		 &\;\;+\left(\sum_{\iota=1}^{\ell}||g({\bar C}_{:,\iota})_{j,:}||^{2}+||g(\bar{d})_{j,:}||^{2}\right)\left(\sum_{\iota=1}^{\ell}||g({C}_{:,\iota})_{i,:}||^{2}+\kappa\right),\nonumber
		\end{align}
		where, $\kappa:=(\ell+1)/\left(\sigma'_{\max}L_{Z}\right)^{2}$.
		\endgroup

		\noindent \textbf{Step 6.}  In this step, we fix $1\le i,j\le n$ and $1\le\iota\le\ell$ and focus on $\mu'\left({\omega}_{i,j}\right)\sigma'({Z}_{\iota})$. We therefore seek to apply Lemma~\ref{lem:Gen-Lip} with $\phi_{1}\equiv\mu'$, $\phi_{2}\equiv{\omega}_{i,j}$, and $\phi_{3}\equiv\sigma'({Y}_{\iota})$. The assumptions on $\mu$ allow us to take $L_{1}=\mu''_{\max}$ and $B_{12}=1$. Step $5$ shows that $L_{2}$ can be taken as $L_{\omega,i,j}$. Step $3$ allows us to pick $L_{31}=\sigma''_{\max}L_{Z}$ with $L_{32}=0$ and $B_{3}=\sigma'_{\max}$ by our assumptions on $\sigma$.

		We obtain that
		\begingroup\makeatletter\def\f@size{9}\check@mathfonts
		\def\maketag@@@#1{\hbox{\m@th\normalsize\normalfont#1}}
		\begin{equation*}
		\left|\mu'\left({\omega}_{i,j}\right)\sigma'({Z}_{\iota}) - 
		\mu'\left(\bar{\omega}_{i,j}\right)\sigma'(\bar{Z}_{\iota})\right|\\ 
		\leq 
		L_{\mu'\sigma',i,j} \|\Theta-\bar \Theta\|
		\end{equation*} 
		\endgroup
		with 
		\begingroup\makeatletter\def\f@size{9}\check@mathfonts
		\def\maketag@@@#1{\hbox{\m@th\normalsize\normalfont#1}}
		\begin{equation*}
		L_{\mu'\sigma',i,j} := \sigma''_{\max}L_{Z}+ \sigma'_{\max}\mu''_{\max}L_{\omega,i,j}.
		\end{equation*} 
		\endgroup
		Evidently, we also have $\mu'\left({\omega}_{i,j}\right)\sigma'({Z}_{\iota})\le \sigma'_{\max}$.

        \noindent \textbf{Step 7.} The formula for $\frac{\partial\tilde{\varphi}(\Theta)}{\partial b_{\iota}}$ obtained in Lemma \ref{lem:der-b} appears as a weighted sum of sums. Step $6$ did focus on the weights of this combination. In this step, we deal with these sums. We fix again $1\le i,j\le n$ and $1\le\iota\le\ell$ and define  
	\begingroup\makeatletter\def\f@size{9}\check@mathfonts
	\def\maketag@@@#1{\hbox{\m@th\normalsize\normalfont#1}}
	\begin{equation*}
	V_{i,j,\iota}:=\sum_{s=1}^{k}C_{(i-1)k+s,\iota}X_{(j-1)k+s}.
	\end{equation*}
    \endgroup
    We define similarly $\bar V_{i,j,\iota}$ according to our usual convention to replace $\Theta$ by $\bar\Theta$. Using the same argument as in Step $5$, we can write
        \begingroup\makeatletter\def\f@size{7}\check@mathfonts
		\def\maketag@@@#1{\hbox{\m@th\normalsize\normalfont#1}}
		\begin{align*}
		& \big|V_{i,j,\iota}-\bar V_{i,j,\iota}\big|^{2}\\
		& \le  \left(\sum_{s=1}^{k}\Big[\left|C_{(i-1)k+s,\iota}\right|\cdot\left|X_{(j-1)k+s}-\bar{X}_{(j-1)k+s}\right|\right.\\
		& \quad \quad \quad +\left|\bar{X}_{(j-1)k+s}\right|\cdot\left|C_{(i-1)k+s,\iota}-\bar{C}_{(i-1)k+s,\iota}\right|\Big]\Bigg)^{2}\\
		& \le  \;2\left(\sum_{s=1}^{k}C_{(i-1)k+s,\iota}^{2}\right)\cdot\left(\sum_{s=1}^{k}|X_{(j-1)k+s}-\bar{X}_{(j-1)k+s}|^{2}\right)\\ &\quad
		+2\left(\sum_{s=1}^{k}\bar{X}_{(j-1)k+s}^{2}\right)\cdot\left(\sum_{s=1}^{k}|C_{(i-1)k+s,\iota}-\bar{C}_{(i-1)k+s,\iota}|^{2}\right).
		\end{align*}
		\endgroup
        We can bound the sums involving $X$ or $\bar X$ as in Step $5$.  The sums with $C,\bar C$ can we rewritten using \eqref{M-transf}; in particular, 
	\begingroup\makeatletter\def\f@size{9}\check@mathfonts
	\def\maketag@@@#1{\hbox{\m@th\normalsize\normalfont#1}}
	\begin{equation*}
	\sum_{s=1}^{k}|C_{(i-1)k+s,\iota}-\bar{C}_{(i-1)k+s,\iota}|^{2} = ||g(C_{:,\iota})_{i,:}-g(\bar{C}_{:,\iota})_{i,:}||^{2}.
	\end{equation*}
    \endgroup
    We obtain
            \begingroup\makeatletter\def\f@size{7}\check@mathfonts
		\def\maketag@@@#1{\hbox{\m@th\normalsize\normalfont#1}}
		\begin{align*}
		& \big|V_{i,j,\iota}-\bar V_{i,j,\iota}\big|^{2}\\
		& \le L_{CdX,i,j,\iota}^{2} \left\|\Theta-\bar\Theta\right\|^2+L_{XdC,i,j,\iota}^{2}||g(C_{:,\iota})_{i,:}-g(\bar{C}_{:,\iota})_{i,:}||^{2}.
		\end{align*}
		\endgroup       
    with 
		\begingroup\makeatletter\def\f@size{9}\check@mathfonts
		\def\maketag@@@#1{\hbox{\m@th\normalsize\normalfont#1}}
		\begin{align*}
		L^2_{CdX,i,j,\iota}
		&:=2||g({C}_{:,\iota})_{i,:}||^{2}\times\\
		& \quad 
		\bigg(\left(\sigma'_{\max}L_{Z}\right)^{2}\textstyle \sum\limits_{\jmath=1}^{\ell}||g({C}_{:,\jmath})_{j,:}||^{2}+\ell+1\bigg),\\
		L_{XdC,j,\iota}^{2}
		&:= 2(\ell+1)
		\bigg(\textstyle\sum\limits_{\jmath=1}^{\ell}||g(\bar{C}_{:,\jmath})_{j,:}||^{2}+||g(\bar{d})_{j,:}||^{2}\bigg).
		\end{align*}
		\endgroup
		
We can also obtain an upper bound to $\big|V_{i,j,\iota}\big|$ 
due to
			\begingroup\makeatletter\def\f@size{9}\check@mathfonts
		\def\maketag@@@#1{\hbox{\m@th\normalsize\normalfont#1}}
		\begin{align*}
		V_{i,j,\iota}^2
		&\leq \left(\sum_{s=1}^k C_{(i-1)k+s,\iota}^2\right) \left(\sum_{s=1}^k X_{(j-1)k+s}^2\right)\\
		&\le||g({C}_{:,\iota})_{i,:}||^2(\ell+1)\left(\sum_{\jmath=1}^{\ell}||g({C}_{:,\jmath})_{j,:}||^{2}+||g({d})_{j,:}||^{2}\right)\\
		&=:B^2_{CX,i,j,\iota}
		\end{align*} 
		\endgroup
		by using Cauchy-Schwarz, \eqref{M-transf}, and a bound obtained in Step $5$.

		\noindent \textbf{Step 8.} We start this final step by applying Lemma \ref{lem:Gen-Lip} to the function $\Upsilon_{i,j,\iota}:=\mu'(\omega_{i,j})\sigma'(Z_{\iota})\left(V_{i,j,\iota}+V_{j,i,\iota}\right)$ by taking $\phi_1\equiv id_{\R}$, $\phi_2\equiv \mu'(\omega_{i,j})\sigma'(Z_{\iota})$, and 
		$\phi_3\equiv V_{i,j,\iota}+V_{j,i,\iota}$. Evidently $L_1=1$. In Step $6$, we have shown that we can take $B_{12}=\sigma'_{\max}$ and $L_2= L_{\mu'\sigma',i,j}$. By Step $7$, we can set $B_3=B_{CX,i,j,\iota}+B_{CX,j,i,\iota}$, $L_{31}=L_{CdX,i,j,\iota}+L_{CdX,j,i,\iota}$, and $L_{32}=0$. We obtain that 
        \begingroup\makeatletter\def\f@size{9}\check@mathfonts
        \def\maketag@@@#1{\hbox{\m@th\normalsize\normalfont#1}}
		\begin{align*}
 		&\left|\Upsilon_{i,j,\iota} - \bar\Upsilon_{i,j,\iota}\right|\\
 		&\leq
		L_{\mu'\sigma',i,j}
		\big(B_{CX,i,j,\iota}+B_{CX,j,i,\iota}\big)||\Theta-\bar{\Theta}||\\
		& \quad +\sigma'_{\max}(L_{CdX,i,j,\iota}+L_{CdX,j,i,\iota}\big)||\Theta-\bar{\Theta}||\\
		& \quad 
		+\sigma'_{\max}\, L_{XdC,j,\iota}\,
		||g(C_{:,\iota})_{i,:}-g(\bar{C}_{:,\iota})_{i,:}||\\
		& \quad
		+
		\sigma'_{\max} \,L_{XdC,i,\iota}\,
		||g(C_{:,\iota})_{j,:}-g(\bar{C}_{:,\iota})_{j,:}||.		
		\end{align*}
		\endgroup
		Since in view of Lemma \ref{lem:Lip-b}, $\frac{\partial\tilde{\varphi}(\Theta)}{\partial b_{\iota}}$ is the sum of $\Upsilon_{i,j,\iota}$ over $i,j$, we obtain immediately the Lipschitz continuity of this partial derivative. Squaring the above bound and using Cauchy-Schwarz on the 6-terms right-hand side, we get
		\begingroup\makeatletter\def\f@size{7}\check@mathfonts
		\def\maketag@@@#1{\hbox{\m@th\normalsize\normalfont#1}}
		\begin{align*}
		&\sum_{\iota=1}^{\ell}\left(\frac{\partial\tilde{\varphi}(\Theta)}{\partial b_{\iota}}-\frac{\partial\tilde{\varphi}(\bar{\Theta})}{\partial b_{\iota}}\right)^{2} = \sum_{\iota=1}^{\ell}\left(\sum_{i,j=1}^n\Upsilon_{i,j,\iota}-\bar\Upsilon_{i,j,\iota}\right)^{2}
		\\
		&\le  6n^{2}\sum_{\iota=1}^{\ell}\sum_{i,j=1}^{n}L_{\mu'\sigma',i,j}^{2} B_{CX,i,j,\iota}^{2}||\Theta-\bar{\Theta}||^{2}\\
		& \quad
		+
		6n^{2}\sum_{\iota=1}^{\ell}\sum_{i,j=1}^{n}L_{\mu'\sigma',i,j}^{2}B_{CX,j,i,\iota}^{2}||\Theta-\bar{\Theta}||^{2}\\
		& \quad  +6\left(n\sigma'_{\max}\right)^{2}\sum_{\iota=1}^{\ell}\sum_{i,j=1}^{n}L_{CdX,i,j,\iota}^{2}||\Theta-\bar{\Theta}||^{2}\\
		& \quad +6\left(n\sigma'_{\max}\right)^{2}\sum_{\iota=1}^{\ell}\sum_{i,j=1}^{n}L_{CdX,j,i,\iota}^{2}||\Theta-\bar{\Theta}||^{2}
		\\
		& \quad
		+6\left(n\sigma'_{\max}\right)^{2}\sum_{\iota=1}^{\ell}\sum_{i,j=1}^{n}L_{XdC,j,\iota}^{2}||g(C_{:,\iota})_{i,:}-g(\bar{C}_{:,\iota})_{i,:}||^{2}
		\\
		& \quad 
		+6\left(n\sigma'_{\max}\right)^{2}\sum_{\iota=1}^{\ell} \sum_{i,j=1}^{n}L_{XdC,i,\iota}^{2}||g(C_{:,\iota})_{j,:}-g(\bar{C}_{:,\iota})_{j,:}||^{2}.
		\end{align*}
		\endgroup
		It remains to estimate these six sums. For the first one (and the second one by symmetry), we can proceed as follows.
		\begingroup\makeatletter\def\f@size{7}\check@mathfonts
		\def\maketag@@@#1{\hbox{\m@th\normalsize\normalfont#1}}
		\begin{align*}
		& \sum_{\iota=1}^{\ell}\sum_{i,j=1}^{n}L_{\mu'\sigma',i,j}^{2}B_{CX,i,j,\iota}^{2}\\
		&=  2\left(\ell+1\right)\left(\sigma''_{\max}L_{Z}\right)^{2}\times\\
		&\qquad\sum_{\iota=1}^{\ell}\sum_{i,j=1}^{n}||g({C}_{:,\iota})_{i,:}||^{2}\left(\sum_{\jmath=1}^{\ell}\!||g({C}_{:,\jmath})_{j,:}||^{2}\!+\!||g({d})_{j,:}||^{2}\right)\\
		& \quad +2\left(\ell+1\right)\left(\sigma'_{\max}\mu''_{\max}\right)^{2}\times\\
		& \qquad\sum_{\iota=1}^{\ell}\sum_{i,j=1}^{n}L_{\omega,i,j}^{2}||g({C}_{:,\iota})_{i,:}||^{2}\left(\sum_{\jmath=1}^{\ell}||g({C}_{:,\jmath})_{j,:}||^{2}+||g({d})_{j,:}||^{2}\right)\\
		& \leq  2\left(\ell+1\right)\left(\sigma''_{\max}L_{Z}\right)^{2}||{C}||^{2}\left(||{C}||^{2}+||{d}||^{2}\right)\\
		& \quad +8\left(\ell+1\right)^{2}\left(\sigma'_{\max}\mu''_{\max}\right)^{2}\left(||{C}||^{2}\left(\sigma'_{\max}L_{Z}\right)^{2}+\ell+1\right)\times\\
		& \qquad  \|C\|^2\left(||{C}||^{2}+||{d}||^{2}\right)^{2}.
		\end{align*}
		\endgroup
		To estimate the third and fourth sum, notice that
			\begingroup\makeatletter\def\f@size{7}\check@mathfonts
		\def\maketag@@@#1{\hbox{\m@th\normalsize\normalfont#1}}
		\begin{align*}
		&\sum_{\iota=1}^{\ell}\sum_{i,j=1}^{n}L_{CdX,i,j,\iota}^{2}\\
		&=2\sum_{\iota=1}^\ell\sum_{i,j=1}^{n}||g({C}_{:,\iota})_{i,:}||^{2}\left(\left(\sigma'_{\max}L_Z\right)^{2}\sum_{\jmath=1}^{\ell}||g({C}_{:,\jmath})_{j,:}||^{2}+\ell+1\right)\\
		&\le 2\sum_{j=1}^{n}||{C}||^{2}\left(\sigma'_{\max}L_Z\right)^{2}
		\sum_{\jmath=1}^{\ell}||g({C}_{:,\jmath})_{j,:}||^{2}+ 2\sum_{j=1}^{n}||{C}||^{2}\left(\ell+1\right)\\
		&\leq 2 ||{C}||^{2}\left(\left(||{C}||\sigma'_{\max}L_{Z}\right)^{2}+n\left(\ell+1\right)\right).
		\end{align*}
		\endgroup
		For the last two terms, we can write
			\begingroup\makeatletter\def\f@size{7}\check@mathfonts
		\def\maketag@@@#1{\hbox{\m@th\normalsize\normalfont#1}}
		\begin{align*}
		& \sum_{\iota=1}^{\ell}\sum_{i,j=1}^{n}L_{XdC,j,\iota}^{2}||g(C_{:,\iota})_{i,:}-g(\bar{C}_{:,\iota})_{i,:}||^{2}\\
		&=  2(\ell+1)\times\\
		& 
		\quad\sum_{\iota,\jmath=1}^{\ell}\sum_{i,j=1}^{n}\left(||g(\bar{C}_{:,\jmath})_{j,:}||^{2}+||g(\bar{d})_{j,:}||^{2}\right)||g(C_{:,\iota})_{i,:}-g(\bar{C}_{:,\iota})_{i,:}||^{2}\\
		&= 2(\ell+1)\left(||\bar{C}||^{2}+||\bar{d}||^{2}\right)||C-\bar C||^{2}.
		\end{align*}
		\endgroup
		Therefore, using all the estimates for the six terms, we arrive at
		\begingroup\makeatletter\def\f@size{7}\check@mathfonts		\def\maketag@@@#1{\hbox{\m@th\normalsize\normalfont#1}}
		\begin{equation*}
		\begin{split}
		&\sum_{\iota=1}^{\ell}\left(\frac{\partial\tilde{\varphi}(\Theta)}{\partial b_{\iota}}-\frac{\partial\tilde{\varphi}(\bar{\Theta})}{\partial b_{\iota}}\right)^{2}\\
		&\leq
		\!24n^{2}||\bar{\Theta}||^{2}
		\Bigg(\!\left(\ell+1\right)\left(\sigma''_{\max}L_{Z}\right)^{2}||\bar{\Theta}||^{2}\\
		& \quad \quad \quad \quad \ \
		+
		4\left(\ell+1\right)^{2}\left(\mu''_{\max}L_{Z}\right)^{2}\left(\sigma'_{\max}\right)^{4}||\bar{\Theta}||^{6}
	\\
	&\quad \quad \quad \quad \ \
		+
		4\left(\ell+1\right)^{3}\left(\sigma'_{\max}\mu''_{\max}\right)^{2}||\bar{\Theta}||^{4}
		\\
		& \quad \quad \quad \quad \ \ 
		+
		||\bar{\Theta}||^{2}\!\left(\!\sigma'_{\max}\!\right)^{4}\!L_{Z}^{2}
		\!+\!
		(n\!+\!1)\left(\ell\!+\!1\right)\left(\sigma'_{\max}\!\right)^{2}\Bigg)
		\|\Theta\!-\!\bar \Theta\|^2.
		\end{split}
		\end{equation*}
		\endgroup
		We conclude that
		\begingroup\makeatletter\def\f@size{7}\check@mathfonts
		\def\maketag@@@#1{\hbox{\m@th\normalsize\normalfont#1}}
		\begin{equation*}
		\begin{split}
		L^2_b 
		&= O\Bigg(n^2D^2 \max\!\bigg\{\ell \left(D\sigma''_{\max}L_Z\right)^2 , \left(\ell\mu''_{\max}L_Z\right)^2D^6(\sigma'_{\max})^4\,\\
 		& \quad \quad \quad \ell^3D^4\left(\sigma'_{\max}\mu''_{\max}\right)^{2},(DL_Z)^2(\sigma'_{\max})^4,n\ell(\sigma'_{\max})^2\bigg\}
 		\Bigg)\\
		&= O\Bigg(n^2D^2 \max\!\bigg\{\ell (DL_Z)^2 \max\!\big\{(\sigma''_{\max})^2,(\sigma'_{\max})^4\big\} ,\\
		& \quad \quad  \left(\ell\mu''_{\max}L_Z\right)^2D^6\!(\sigma'_{\max})^4, \!\ell^3\!D^4\left(\sigma'_{\max}\,\mu''_{\max}\right)^{2}\!\!,n\ell(\sigma'_{\max})^2\bigg\}
		\Bigg)\\
		&= O\Bigg(\mathcal{C}_b\,n^2D^2  \max\!\big\{\ell D^2L^2_Z, \ell^2D^6L_Z^2, \ell^3D^4,n\ell\big\}
		\Bigg),
		\end{split}
		\end{equation*}
		\endgroup
		where $\mathcal{C}_b\equiv\mathcal{C}_b(\sigma'_{\max},\sigma''_{\max},\mu''_{\max})$ is a constant that only depends polynomially on  $\sigma'_{\max},\sigma''_{\max},\mu''_{\max}$.
\end{proof}

In the next lemma, we determine a bound for the Lipschitz constant of $
\frac{\partial \tilde \varphi(\Theta) }{\partial d_\nu}$. Lemmas \ref{lem:der-b} and \ref{lem:der-d} show how to deduce from the previous lemma a bound on the Lipschitz constant of $
\frac{\partial \tilde \varphi(\Theta)}{\partial A_{\iota,\eta}}$ and of $
\frac{\partial \tilde \varphi(\Theta)}{\partial C_{\nu,\iota}}$ from the next lemma. We use the same set $\mathcal{D}$ as in Lemma \ref{lem:Lip-b}.
\begin{lem}\label{lem:Lip-d}
Let  $\tilde{\varphi}$ be the function defined in \eqref{eq:tilde phi} and let us consider the set $\mathcal{D}$ defined in the statement of Lemma \ref{lem:Lip-b} for some $D>0$. The function 
	\[
	\Theta=(A,d,C,d) \mapsto \frac{\partial \tilde \varphi(\Theta)}{\partial d}\in \R^{nk}
	\]
is Lipschitz continuous on $\mathcal{D}$ with a constant $L_d$ 
	satisfying
	\begin{equation*}
	L^2_d = \mathcal{C}_d \max\Big\{n^2D^2L_Z^2,n^3k\ell,n\ell^2D^6L_{Z}^{2}, n^2\ell^3D^4\Big\},
	\end{equation*}
	where $L_Z:=\sqrt{1 + \|\Sigma\|_S^2}$ and $\mathcal{C}_d$ is a constant that depends polynomially on  $\sigma'_{\max}$ and $\mu''_{\max}$.
\end{lem}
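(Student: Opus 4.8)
The plan is to rerun the machinery of the proof of Lemma~\ref{lem:Lip-b} in a lighter form, exploiting that by Lemma~\ref{lem:der-d} the partial derivative
\[
\frac{\partial\tilde\varphi(\Theta)}{\partial d_{\nu}}=2\sum_{j=1}^{n}\mu'(\omega_{\alpha,j})\,X_{(j-1)k+\beta},\qquad \nu=(\alpha-1)k+\beta,
\]
carries no $\sigma'$-weight; this is precisely why the constant $\mathcal{C}_d$ will depend polynomially on $\sigma'_{\max}$ and $\mu''_{\max}$ only, and not on $\sigma''_{\max}$. First I would collect the building blocks already established inside the proof of Lemma~\ref{lem:Lip-b}, all valid on the same set $\mathcal{D}$: by \eqref{step-4-Lip}--\eqref{eq:L_nu}, $\Theta\mapsto X_{\nu}$ is Lipschitz with constant $L_{\nu}$ where $L_{\nu}^{2}=(\|C_{\nu,:}\|\sigma'_{\max}L_Z)^{2}+\ell+1$, and bounded by $B_{3,\nu}:=\sqrt{\ell+1}\,\sqrt{\|C_{\nu,:}\|^{2}+d_{\nu}^{2}}$ by \eqref{step-4-bound}; by Step~5 of that proof, $\Theta\mapsto\omega_{\alpha,j}$ is Lipschitz with constant $L_{\omega,\alpha,j}$ given in \eqref{step-5-Lip}; and $\mu'$ is bounded by $1$ and is $\mu''_{\max}$-Lipschitz.

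Next I would apply Lemma~\ref{lem:Gen-Lip} to each summand $\mu'(\omega_{\alpha,j})\,X_{(j-1)k+\beta}$, taking $\phi_{1}\equiv\mu'$, $\phi_{2}\equiv\omega_{\alpha,j}$, $\phi_{3}\equiv X_{(j-1)k+\beta}$, which gives $L_{1}=\mu''_{\max}$, $B_{12}=1$, $L_{2}=L_{\omega,\alpha,j}$, $L_{31}=L_{(j-1)k+\beta}$, $L_{32}=0$, $B_{3}=B_{3,(j-1)k+\beta}$, hence
\[
\bigl|\mu'(\omega_{\alpha,j})X_{(j-1)k+\beta}-\mu'(\bar\omega_{\alpha,j})\bar X_{(j-1)k+\beta}\bigr|\le\bigl(L_{(j-1)k+\beta}+B_{3,(j-1)k+\beta}\,\mu''_{\max}\,L_{\omega,\alpha,j}\bigr)\|\Theta-\bar\Theta\|.
\]
Summing over $j$ with the Cauchy--Schwarz inequality and $(a+b)^{2}\le 2a^{2}+2b^{2}$, and accounting for the factor $2$ in Lemma~\ref{lem:der-d}, I obtain
\[
\Bigl(\tfrac{\partial\tilde\varphi(\Theta)}{\partial d_{\nu}}-\tfrac{\partial\tilde\varphi(\bar\Theta)}{\partial d_{\nu}}\Bigr)^{2}\le 8n\sum_{j=1}^{n}\Bigl(L_{(j-1)k+\beta}^{2}+B_{3,(j-1)k+\beta}^{2}(\mu''_{\max})^{2}L_{\omega,\alpha,j}^{2}\Bigr)\|\Theta-\bar\Theta\|^{2}.
\]

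It then remains to sum this over $\nu=(\alpha-1)k+\beta$ with $1\le\alpha\le n$, $1\le\beta\le k$, to rewrite every partial sum $\sum_{s}\|C_{(i-1)k+s,:}\|^{2}$ as a row-slice of $g(C)$ via \eqref{M-transf}, and to bound every row slice as well as $\|C\|^{2}$, $\|d\|^{2}$, $\|\Theta\|^{2}$ by $D^{2}$. The $L_{\nu}^{2}$-part telescopes to $\sum_{\nu=1}^{nk}L_{\nu}^{2}\le(\sigma'_{\max}L_Z)^{2}D^{2}+nk(\ell+1)$, which after the overall factor $8n^{2}$ produces the terms $n^{2}D^{2}L_Z^{2}$ and $n^{3}k\ell$. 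For the remaining part I would use $\sum_{\beta=1}^{k}B_{3,(j-1)k+\beta}^{2}=(\ell+1)R_{j}$ with $R_{j}$ the $(C,d)$-row slice at index $j$, then substitute $\kappa=(\ell+1)/(\sigma'_{\max}L_Z)^{2}$ inside \eqref{step-5-Lip} so that the prefactor $(\sigma'_{\max}L_Z)^{2}$ cancels against the $\kappa$-terms, and finally split $\sum_{\alpha,j}R_{j}L_{\omega,\alpha,j}^{2}$ into products of single sums; the crude bounds $\sum_{j}R_{j}^{2}\le(\sum_{j}R_{j})^{2}\le D^{4}$ and $\sum_{j}R_{j}\le D^{2}$ then deliver the terms $n\ell^{2}D^{6}L_Z^{2}$ and $n^{2}\ell^{3}D^{4}$. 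Collecting everything yields $L_d^{2}=O(\max\{n^{2}D^{2}L_Z^{2},\,n^{3}k\ell,\,n\ell^{2}D^{6}L_Z^{2},\,n^{2}\ell^{3}D^{4}\})$ with $\mathcal{C}_d$ polynomial in $\sigma'_{\max},\mu''_{\max}$. The main obstacle is exactly this last bookkeeping step: attributing to each sub-sum the correct product of powers of $n,\ell,k,D,L_Z$, and in particular performing the $\kappa$-substitution so that the $\sigma'$-dependence is fully absorbed into $\mathcal{C}_d$ without the exponents of $D$ exceeding $6$ or those of $\ell$ exceeding $3$.
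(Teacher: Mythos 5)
Your proposal is correct and follows essentially the same route as the paper's proof: the identical application of Lemma~\ref{lem:Gen-Lip} with $\phi_1\equiv\mu'$, $\phi_2\equiv\omega_{\alpha,j}$, $\phi_3\equiv X_{(j-1)k+\beta}$, the same Cauchy--Schwarz summation over $j$ and $\nu$, the same use of \eqref{M-transf} and the $\kappa$-substitution in \eqref{step-5-Lip}, and the same crude bounds $\sum_j R_j\le D^2$, $\sum_j R_j^2\le D^4$ to reach the stated maximum. The observation that the absence of a $\sigma'$-weight in $\partial\tilde\varphi/\partial d_\nu$ is what keeps $\sigma''_{\max}$ out of $\mathcal{C}_d$ is also consistent with the paper.
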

\begin{proof}
We set $\Theta=(A,b,C,d)$ and $\bar\Theta = (\bar A,\bar b,\bar C, \bar d)$ two distinct points in $\mathcal{D}$.
Let us fix $1\le \nu\le nk$ and set $\alpha$ and $\beta$ to be the unique numbers for which $\nu=(\alpha-1)k + \beta$, $1\le\alpha\le n$ and $1\le \beta\le k$. 
We computed in Lemma~\ref{lem:der-d} 
	\begingroup\makeatletter\def\f@size{9}\check@mathfonts	\def\maketag@@@#1{\hbox{\m@th\normalsize\normalfont#1}}	\begin{equation}\label{eq:lip-d-1}
	\frac{\partial\tilde{\varphi}(\Theta)}{\partial d_{\nu}}
	=
	2\sum_{j=1}^{n}\mu'(\omega_{\alpha,j})X_{(j-1)k+\beta},
	\end{equation}
	\endgroup
	with $\omega_{\alpha,j}$ and $X$ as defined in \eqref{shortcut-notation}.
	
	Let $1\le j\le n$ and set $\eta(j):=(j-1)k+\beta$. To obtain a bound on the Lipschitz constant of $\Gamma_{\alpha, \beta, j}:=\mu'(\omega_{\alpha,j})X_{\eta(j)}$, we apply Lemma \ref{lem:Gen-Lip} with 
	$\phi_{1}\equiv\mu'$, $\phi_{2}\equiv{\omega}_{\alpha,j}$, and
	$\phi_{3}\equiv X_{\eta(j)}$. By assumption, we know that
	$L_1:=\mu''_{\max}$ and $B_{12}:=\mu'_{\max}\leq 1$.
	By Step $5$ in the proof of Lemma \ref{lem:Lip-b}, we can take 
	$L_2:=L_{\omega,\alpha,j}$, while Step $4$ ensures we can set $L_{31}:=L_{\eta(j)}$ as in \eqref{eq:L_nu} (note that $L_2$ and $L_{31}$ are functions of $\Theta$ and $\bar\Theta$). Finally, we can let $B_3:=\sqrt{\ell+1}\cdot\sqrt{||{C}_{\eta(j),:}||^{2}+|{d}_{\eta(j)}|^{2}}$ and $L_{32} = 0$. We deduce that 
	\begingroup\makeatletter\def\f@size{9}\check@mathfonts	\def\maketag@@@#1{\hbox{\m@th\normalsize\normalfont#1}}
	\begin{equation}
	\label{eq:lip-d-2}
	\big|\Gamma_{\alpha,\beta,j}-\bar\Gamma_{\alpha,\beta,j}\big|\le  L_{\Gamma,\alpha,\beta,j}||\Theta-\bar\Theta||
	\end{equation}
	\endgroup
with 	
	\begingroup\makeatletter\def\f@size{7}\check@mathfonts	\def\maketag@@@#1{\hbox{\m@th\normalsize\normalfont#1}}
	\begin{equation*}
	L_{\Gamma,\alpha,\beta,j}=
	L_{\eta(j)}
	+\mu''_{\max}\sqrt{\ell+1}\cdot\sqrt{||{C}_{\eta(j),:}||^{2}+|{d}_{\eta(j)}|^{2}}L_{\omega,\alpha,j}.
	\end{equation*}
	\endgroup
    Since
	\begingroup\makeatletter\def\f@size{9}\check@mathfonts	\def\maketag@@@#1{\hbox{\m@th\normalsize\normalfont#1}}
	\begin{eqnarray*}
	\left(\tfrac{\partial\tilde{\varphi}(\Theta)}{\partial d_{\nu}}-\tfrac{\partial\tilde{\varphi}(\bar\Theta)}{\partial d_{\nu}}\right)^2&\le&4\bigg(\textstyle\sum\limits_{j=1}^nL_{\Gamma,\alpha,\beta,j}\bigg)^2||\Theta-\bar\Theta||^2,\\
	&\le& 4n\textstyle\sum\limits_{j=1}^nL^2_{\Gamma,\alpha,\beta,j}||\Theta-\bar\Theta||^2,
	\end{eqnarray*}
	\endgroup
	the function $\frac{\partial \tilde \varphi(\Theta)}{\partial d_\nu}$ is Lipschitz continuous. Now, the Lipschitz constant of $\frac{\partial \tilde \varphi(\Theta)}{\partial d}$ can be estimated by 
	\begingroup\makeatletter\def\f@size{7}\check@mathfonts	\def\maketag@@@#1{\hbox{\m@th\normalsize\normalfont#1}}
 	\begin{align*}
L_d^2&\le4n\sum_{\nu=1}^{nk}\sum_{j=1}^nL^2_{\Gamma,\alpha,\beta,j}=4n\sum_{\beta=1}^k\sum_{\alpha,j=1}^nL^2_{\Gamma,\alpha,\beta,j}\\
& \le 8n^2 \sum_{\beta=1}^k\sum_{j=1}^n L^2_{\eta(j)}+8n(\ell+1)\left(\mu''_{\max}\right)^2\times\\
 &\quad\left(\sum_{\beta=1}^k\sum_{\alpha,j=1}^n\left(||{C}_{\eta(j),:}||^{2}+|{d}_{\eta(j)}|^{2}\right)L^2_{\omega,\alpha,j}\right).
 	\end{align*}	
	\endgroup
	For the first sum above, we can write first
	\begingroup\makeatletter\def\f@size{9}\check@mathfonts	\def\maketag@@@#1{\hbox{\m@th\normalsize\normalfont#1}}
	\begin{eqnarray*}
	\sum_{j=1}^nL^2_{\eta(j)}&=&\left(\sigma'_{\max}L_Z\right)^2\sum_{j=1}^n||C_{\eta(j),:}||^2+n(\ell+1)\\
	&=& \left(\sigma'_{\max}L_Z\right)^2\sum_{\iota=1}^\ell||g(C_{:,\iota})_{:,\beta}||^2+n(\ell+1),
	\end{eqnarray*}
	\endgroup	
	where this last equality, 
	similarly to \eqref{M-transf}, follows from 
	        \begingroup\makeatletter\def\f@size{7}\check@mathfonts
		\def\maketag@@@#1{\hbox{\m@th\normalsize\normalfont#1}}
		\begin{equation*}
		\sum_{j=1}^n\|C_{\eta(j),:}\|^2
		= \sum_{j=1}^n \sum_{\iota=1}^\ell C_{(j-1)k+\beta,\iota}^2=\sum_{\iota=1}^\ell \| g(C_{:,\iota})_{:,\beta}\|^2.
		\end{equation*}
		\endgroup
    Therefore, 
	\begingroup\makeatletter\def\f@size{9}\check@mathfonts	\def\maketag@@@#1{\hbox{\m@th\normalsize\normalfont#1}}
	\begin{equation*}
	\sum_{\beta=1}^k\sum_{j=1}^nL^2_{\eta(j)}=\left(\sigma'_{\max}L_Z\right)^2||C||^2+nk(\ell+1).
	\end{equation*}
	\endgroup	
For the second sum above, we have from \eqref{step-5-Lip}:
        \begingroup\makeatletter\def\f@size{9}\check@mathfonts
		\def\maketag@@@#1{\hbox{\m@th\normalsize\normalfont#1}}
		\begin{align*}
		 &\left.\sum_{\alpha=1}^nL_{\omega,\alpha,j}^2\middle/\left(2\left(\ell+1\right)\left(\sigma'_{\max}L_{Z}\right)^{2}\right)\right.\\
		 &=\left(||C||^{2}+||d||^{2}\right)\left(\sum_{\iota=1}^{\ell}||g({C}_{:,\iota})_{j,:}||^{2}+\kappa\right)\nonumber\\
		 &\;\;+\left(\sum_{\iota=1}^{\ell}||g({\bar C}_{:,\iota})_{j,:}||^{2}+||g(\bar{d})_{j,:}||^{2}\right)\left(||C||^{2}+n\kappa\right),\nonumber
		 \end{align*}
		\endgroup
	with $\kappa:=(\ell+1)/\left(\sigma'_{\max}L_{Z}\right)^{2}$. Observe that
        \begingroup\makeatletter\def\f@size{7}\check@mathfonts
		\def\maketag@@@#1{\hbox{\m@th\normalsize\normalfont#1}}
		\begin{align*}
		 &\sum_{j=1}^n\sum_{\beta=1}^k\left(||{C}_{\eta(j),:}||^{2}+|{d}_{\eta(j)}|^{2}\right)\sum_{\iota=1}^{\ell}||g({C}_{:,\iota})_{j,:}||^{2}\\
		 &\le\sum_{j=1}^n\sum_{\beta=1}^k\left(||{C}_{\eta(j),:}||^{2}+|{d}_{\eta(j)}|^{2}\right)\left(\sum_{\iota=1}^{\ell}||g({C}_{:,\iota})_{j,:}||^{2} + ||g(d)_{j,:}||^{2}\right)\\
		 & =\sum_{j=1}^n\left(\sum_{\iota=1}^{\ell}||g({C}_{:,\iota})_{j,:}||^{2}+||g(d)_{j,:}||^{2}\right)^2\le D^4
		 \end{align*}
	\endgroup
	by the classical inequality $\sum_i\lambda_i^2\le\left(\sum_i|\lambda_i|\right)^2$.	Thus, 
	\begingroup\makeatletter\def\f@size{7}\check@mathfonts	\def\maketag@@@#1{\hbox{\m@th\normalsize\normalfont#1}}
 	\begin{align*}
&\left.\sum_{\beta=1}^k\sum_{\alpha,j=1}^n\left(||{C}_{\eta(j),:}||^{2}+|{d}_{\eta(j)}|^{2}\right)L^2_{\omega,\alpha,j}\middle/\left(2\left(\ell+1\right)\left(\sigma'_{\max}L_{Z}\right)^{2}\right)\right.\\
&\le D^2\left(D^4+D^2\kappa\right) + D^4(D^2+n\kappa) = 2D^6+(n+1)D^4\kappa.
 	\end{align*}	
	\endgroup
Putting everything together, we arrive at
	\begingroup\makeatletter\def\f@size{7}\check@mathfonts	\def\maketag@@@#1{\hbox{\m@th\normalsize\normalfont#1}}
\begin{align*}
	&\left.\sum_{\nu=1}^{nk}\left(\tfrac{\partial\tilde{\varphi}(\Theta)}{\partial d_{\nu}}-\tfrac{\partial\tilde{\varphi}(\bar\Theta)}{\partial d_{\nu}}\right)^2\middle/||\Theta-\bar\Theta||^2\right.\\
	&\le 8n^2\left(\left(\sigma'_{\max}DL_Z\right)^2+nk(\ell+1)\right)\\
	&\;\;\; + 16n(\ell+1)^2\left(\mu''_{\max}\right)^2\left(2\left(\sigma'_{\max}L_{Z}\right)^{2}D^6+(n+1)D^4(\ell+1)\right).
	\end{align*}
	\endgroup
Therefore,
	\begingroup\makeatletter\def\f@size{7}\check@mathfonts
	\def\maketag@@@#1{\hbox{\m@th\normalsize\normalfont#1}}
	\begin{equation*}
	\begin{split}
	L^2_d
	&=O\Bigg(\max\Big\{n^2D^2\left(\sigma'_{\max}L_Z\right)^2,n^3k\ell,n\ell^2D^6\left(\mu''_{\max}\sigma'_{\max}L_{Z}\right)^{2},\\
	&\quad \quad \quad\quad\quad n^2\ell^3D^4\left(\mu''_{\max}\right)^2\Big\}\Bigg)\\
	&=\mathcal{C}_d \max\Big\{n^2D^2L_Z^2,n^3k\ell,n\ell^2D^6L_{Z}^{2}, n^2\ell^3D^4\Big\},
	\end{split}
	\end{equation*}
	\endgroup
	where $\mathcal{C}_d$ is a constant that only depends polynomially on  $\sigma'_{\max},\mu''_{\max}$.
\end{proof}

We can now merge all our previous results to determine a bound on the Lipschitz constant of $\nabla\tilde\varphi$. Again, we use the set $\mathcal{D}$ as in Lemma \ref{lem:Lip-b} and 
$L_Z:=\sqrt{1+||\Sigma||_S^2}$.
\begin{lem}\label{lem:L-smooth}
	The function 
	\begin{equation*}
     \Theta = (A,b,C,d) \mapsto \nabla \tilde \varphi(\Theta)
	\end{equation*}
	is Lipschitz continuous with Lipschitz constant $L_{\tilde\varphi}$ satisfying
	\begingroup\makeatletter\def\f@size{7}\check@mathfonts
	\def\maketag@@@#1{\hbox{\m@th\normalsize\normalfont#1}}
	\begin{equation*}
	\begin{split}
	L^2_{\tilde\varphi}
	&=
	\mathcal{C}_{\tilde\varphi} n^2\ell
	\max\!\bigg\{D^4 L^4_Z, \ell D^8L_Z^4, \ell^2D^6L^2_Z,n D^2L^2_Z,nk\ell,\ell^3D^4\bigg\},
	\end{split}
	\end{equation*}
	\endgroup
	where $\mathcal{C}_{\tilde\varphi}$ is a constant that only depends polynomially on  $\sigma'_{\max}$, $\sigma''_{\max}$, and $\mu''_{\max}$.
\end{lem}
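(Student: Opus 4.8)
The plan is to split $\nabla\tilde\varphi$ into its four coordinate blocks — the partial derivatives with respect to $A$, $b$, $C$, and $d$ — and to bound the Lipschitz constant of each block separately. Since the norm \eqref{eq:norm_definition} is the Euclidean norm across these blocks, if block $i$ is Lipschitz with constant $L_i$ on $\mathcal D$, then $L^2_{\tilde\varphi}\le L_A^2+L_b^2+L_C^2+L_d^2$; it then remains to insert the estimates of Lemmas \ref{lem:Lip-b} and \ref{lem:Lip-d} and to combine the resulting maxima.

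The $b$-block and the $d$-block are handled directly: Lemma \ref{lem:Lip-b} gives that $\Theta\mapsto\partial\tilde\varphi/\partial b$ is Lipschitz on $\mathcal D$ with constant $L_b$, and Lemma \ref{lem:Lip-d} gives the analogous constant $L_d$ for $\Theta\mapsto\partial\tilde\varphi/\partial d$. For the $A$-block I would use the identity $\partial\tilde\varphi(\Theta)/\partial A_{\iota,\eta}=[h(\Sigma)]_\eta\,\partial\tilde\varphi(\Theta)/\partial b_\iota$ of Lemma \ref{lem:der-b}: the sum of squared increments over $(\iota,\eta)$ factors as $\|h(\Sigma)\|^2\sum_\iota(\partial\tilde\varphi(\Theta)/\partial b_\iota-\partial\tilde\varphi(\bar\Theta)/\partial b_\iota)^2$, so the $A$-block is Lipschitz with constant at most $\|\Sigma\|_S L_b\le L_Z L_b$. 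Thus the $A$- and $b$-blocks together contribute at most $(1+L_Z^2)L_b^2=O\big(\mathcal C_b\,n^2\max\{\ell D^4L_Z^4,\ell^2D^8L_Z^4,\ell^3D^6L_Z^2,n\ell D^2L_Z^2\}\big)$, which already accounts for the first four monomials (after extracting the prefactor $n^2\ell$).

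The $C$-block is the delicate one, because the identity $\partial\tilde\varphi(\Theta)/\partial C_{\nu,\iota}=Y_\iota\,\partial\tilde\varphi(\Theta)/\partial d_\nu$ of Lemma \ref{lem:der-d} involves $Y=\tilde\sigma\circ f_1^{A,b}(h(\Sigma))$, which \emph{varies} with $\Theta$ and cannot simply be pulled out the way $h(\Sigma)$ was in the $A$-block. Here I would use the elementary product estimate $|Y_\iota g_\nu-\bar Y_\iota\bar g_\nu|\le|Y_\iota|\,|g_\nu-\bar g_\nu|+|\bar g_\nu|\,|Y_\iota-\bar Y_\iota|$ with $g_\nu:=\partial\tilde\varphi/\partial d_\nu$, combined with three facts already at hand: (i) $|Y_\iota|\le1$ and, from Step~2 of the proof of Lemma \ref{lem:Lip-b}, $\|Y-\bar Y\|\le\sigma'_{\max}L_Z\|\Theta-\bar\Theta\|$; (ii) the uniform bound $\|\partial\tilde\varphi/\partial d\|^2\le 4n^2(\ell+1)D^2$ on $\mathcal D$, obtained from \eqref{eq:in Lem 2}, $|\mu'|\le1$, and the Step~4 bound $X_\nu^2\le(\ell+1)(\|C_{\nu,:}\|^2+d_\nu^2)$ via Cauchy--Schwarz; and (iii) the Lipschitz bound $L_d$ of Lemma \ref{lem:Lip-d}. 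Summing over $\nu,\iota$ and applying Cauchy--Schwarz to the two-term right-hand side yields that the $C$-block is Lipschitz with constant squared at most $O\big(\ell L_d^2+n^2\ell D^2L_Z^2(\sigma'_{\max})^2\big)$.

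Finally I would add the four contributions, substitute the explicit forms of $L_b^2$ and $L_d^2$, distribute the leading dimensional factors into the maxima, and verify that every resulting monomial is dominated by one of $D^4L_Z^4$, $\ell D^8L_Z^4$, $\ell^2D^6L_Z^2$, $nD^2L_Z^2$, $nk\ell$, $\ell^3D^4$ once the common prefactor $n^2\ell$ is extracted: the $A$/$b$ part supplies the first four, while $\ell L_d^2$ supplies $nk\ell$ (from the $n^3k\ell$ term of $L_d^2$) and $\ell^3D^4$ (from the $n^2\ell^3D^4$ term), all remaining terms being subsumed. Absorbing the polynomial dependence on $\sigma'_{\max},\sigma''_{\max},\mu''_{\max}$ inherited from $\mathcal C_b$ and $\mathcal C_d$ into a single constant $\mathcal C_{\tilde\varphi}$ gives the claimed bound. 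The main obstacle is exactly the $C$-block: one must trade the Lipschitz variation of $Y$ against a uniform bound on $\partial\tilde\varphi/\partial d$, and then carefully track the powers of $n,k,\ell,D,L_Z$ so that the final maximum collapses precisely to the stated one; everything else is bookkeeping and assembly.
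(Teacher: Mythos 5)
Your proposal is correct, and for the $A$-, $b$-, and $d$-blocks it coincides with the paper's argument: both decompose $\|\nabla\tilde\varphi(\Theta)-\nabla\tilde\varphi(\bar\Theta)\|^2$ blockwise, pull the constant factor $[h(\Sigma)]_\eta$ out of the $A$-block via Lemma~\ref{lem:der-b}, and then insert $L_b$ and $L_d$. Where you genuinely depart from the paper is the $C$-block, and your instinct that it is ``the delicate one'' is exactly right. The paper's proof writes
\begin{equation*}
\sum_{\nu,\iota}\Big(\tfrac{\partial\tilde\varphi(\Theta)}{\partial C_{\nu,\iota}}-\tfrac{\partial\tilde\varphi(\bar\Theta)}{\partial C_{\nu,\iota}}\Big)^2
=\Big(\textstyle\sum_\iota Y_\iota^2\Big)\sum_\nu\Big(\tfrac{\partial\tilde\varphi(\Theta)}{\partial d_\nu}-\tfrac{\partial\tilde\varphi(\bar\Theta)}{\partial d_\nu}\Big)^2
\end{equation*}
as part of an asserted equality, i.e.\ it factors $Y_\iota$ out of the difference as if $Y$ did not depend on $(A,b)$; this is literally valid only for the $A$-block, where the multiplier $h(\Sigma)$ is genuinely constant. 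Your product-rule estimate $|Y_\iota g_\nu-\bar Y_\iota\bar g_\nu|\le|Y_\iota|\,|g_\nu-\bar g_\nu|+|\bar g_\nu|\,|Y_\iota-\bar Y_\iota|$, combined with $|Y_\iota|\le1$, the Step~2 bound $\|Y-\bar Y\|\le\sigma'_{\max}L_Z\|\Theta-\bar\Theta\|$, and the uniform bound $\|\partial\tilde\varphi/\partial d\|^2\le4n^2(\ell+1)D^2$, is the correct repair; the extra term it produces, of order $n^2\ell D^2L_Z^2$, is dominated by the monomial $n^2\ell\cdot nD^2L_Z^2$ already present in the stated maximum, so the lemma's conclusion is unaffected. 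Your final bookkeeping — $(1+L_Z^2)L_b^2$ yielding the first four monomials after extracting $n^2\ell$, and $\ell L_d^2$ yielding $nk\ell$ and $\ell^3D^4$ — checks out. In short, your proof is not only correct but closes a gap that the paper's own one-line equality leaves open.
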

\begin{proof}
Let $\Theta=(A,b,C,d)$ and $\bar\Theta = (\bar A,\bar b,\bar C, \bar d)$ two distinct points in $\mathcal{D}$. In view of Lemmas \ref{lem:der-b} and \ref{lem:der-d} 
	\begingroup\makeatletter\def\f@size{9}\check@mathfonts
	\def\maketag@@@#1{\hbox{\m@th\normalsize\normalfont#1}}
	\begin{align*}
	&||\nabla\tilde{\varphi}(\Theta)-\nabla\tilde{\varphi}(\bar{\Theta})||^{2}\\
	 &=
	 \textstyle\sum\limits_{\iota=1}^{\ell}\left(\frac{\partial\tilde{\varphi}(\Theta)}{\partial b_{\iota}}-\frac{\partial\tilde{\varphi}(\bar{\Theta})}{\partial b_{\iota}}\right)^{2}\bigg(1+\sum\limits_{\eta=1}^{r}[h(\Sigma)]_{\eta}^{2}\bigg)\\
	 & \quad
	 +\textstyle\sum\limits_{\nu=1}^{nk}\left(\frac{\partial\tilde{\varphi}(\Theta)}{\partial d_{\nu}}-\frac{\partial\tilde{\varphi}(\bar{\Theta})}{\partial d_{\nu}}\right)^{2}\left(1+\sum\limits_{\iota=1}^{\ell}Y_{\iota}^{2}\right)\\
	 &\le \left(L_b^2(1+||\Sigma||^2_S)+L_d^2(1+\ell)\right)||\Theta-\bar\Theta||^2,
	\end{align*}
	\endgroup
    because $Y_\iota=\sigma([Ah(\Sigma)+b]_\iota)\le 1$ for all $1\le \iota\le \ell$. It remains now to use our estimates for $L_b$ and $L_d$ obtained in Lemma \ref{lem:Lip-b} and Lemma \ref{lem:Lip-d}.
\end{proof}

Now that the single-layered neural network parametrization has been completely treated, we can turn our attention to a multi-layered neural network  configuration. 
The set $\mathcal{D}$ is now the one defined in the statement of Theorem~\ref{thm:convergence}, and the notation follows that of Section~\ref{sec:main}.
\begin{proof}[Proof of Theorem~\ref{thm:convergence}]${}$\\
Let $\Theta,\bar\Theta\in\mathcal{D}$. For $0\le u\le v\le m$, we define $\mathcal{N}^{u:v}:\R^{\ell_u}\to\R^{\ell_v}$ so that $\mathcal{N}^{u:u}(x) = x$ and $\mathcal{N}^{u:v}(x):=\sigma^{(v)}\left(A^{(v-1)}\mathcal{N}^{u:v-1}(x)+b^{(v-1)}\right)$ for $u<v$. Observe that $\mathcal{N}^{u:v}(x)=\mathcal{N}^{c:v}\circ\mathcal{N}^{u:c}(x)$ when $0\le u\le c\le v\le m$ and that $\mathcal{N}^\Theta_m(x)=A^{(m)}\mathcal{N}^{0:m}(x)+b^{(m)}$. 

For convenience, we replace $\sigma^{(u)}$ by $\tilde\sigma$ when no confusion is possible. For all $||v||\le D$, we set $F(v):=\sum_{i,j}^n\mu\left(\left[g(v)g(v)^T-\Sigma\right]_{i,j}\right)$. Replicating \eqref{eq:Step 5 initial} in Step 5 of Lemma \ref{lem:Lip-b}, we deduce the bound $L_F:=2\sqrt{n}D$ for the Lipschitz constant of $F$. Similarly 
following the proof of Lemma \ref{lem:Lip-d} with $C=0$, we can deduce one for $\frac{\partial F}{\partial v_\nu}$ as $L_{F'}:=2n(1+\mu''_{\max}D^2)$ by \eqref{eq:in Lem 2} and Lemma \ref{lem:Gen-Lip}.

We fix $0\le u\le m$, $1\le i \le \ell_{u+1}$, and $1\le j\le \ell_u$.  As in Lemma \ref{lem:der-b}, we can check that 
	\begingroup\makeatletter\def\f@size{7}\check@mathfonts
	\def\maketag@@@#1{\hbox{\m@th\normalsize\normalfont#1}} 
	\begin{align}
	&\frac{\partial\varphi(\Theta)}{\partial A_{i,j}^{(u)}} = \frac{\partial F(A^{(m)}\mathcal{N}^{u+1:m}\circ\mathcal{N}^{0:u+1}(h(\Sigma))+b^{(m)})}{\partial A_{i,j}^{(u)}}\nonumber \\
 	&=
 	\Bigg[\left|\nabla_x F(A^{(m)}\mathcal{N}^{u+1:m}(x)+b^{(m)})\right|_{x=\mathcal{N}^{0:u+1}(h(\Sigma))}\Bigg]_i\times\nonumber\\
 	&\quad \Bigg[\tilde\sigma'\left(A^{(u)}\mathcal{N}^{0:u}(h(\Sigma))+b^{(u)}\right)\Bigg]_{i,i}\left[\mathcal{N}^{0:u}(h(\Sigma))\right]_j\nonumber\\
 	&=\frac{\partial\varphi(\Theta)}{\partial b_i^{(u)}}\left[\mathcal{N}^{0:u}(h(\Sigma))\right]_j.\label{eq:partial_der_phi}
	\end{align}
    \endgroup
Note that $\tilde\sigma'(v)$ is an $\ell_{u+1}\times\ell_{u+1}$ diagonal matrix. Since $||\mathcal{N}^{0:u}(h(\Sigma))||^2\le \ell_{u}$ for $u>0$, we get
	\begingroup\makeatletter\def\f@size{7}\check@mathfonts
	\def\maketag@@@#1{\hbox{\m@th\normalsize\normalfont#1}} 
	\begin{equation}\label{eq:the ell_u}
	\sum_{j=1}^{\ell_{u}}\bigg(\frac{\partial\varphi(\Theta)}{\partial A_{i,j}^{(u)}}-\frac{\partial\varphi(\bar\Theta)}{\partial A_{i,j}^{(u)}}\bigg)^2 \le \ell_{u}\left(\frac{\partial\varphi(\Theta)}{\partial b_{i}^{(u)}}-\frac{\partial\varphi(\bar\Theta)}{\partial b_{i}^{(u)}}\right)^2.
	\end{equation}
    \endgroup
    For $u=0$, we need to replace $\ell_u$ by $||\Sigma||^2_S$. 
    Denoting $\mathcal{N}^\Theta_u:=A^{(u)}\mathcal{N}^{0:u}(h(\Sigma))+b^{(u)}$ and expanding \eqref{eq:partial_der_phi}, we have for $u<m$:
 	\begingroup\makeatletter\def\f@size{7}\check@mathfonts
	\def\maketag@@@#1{\hbox{\m@th\normalsize\normalfont#1}} 
	\begin{equation}\label{eq:big product}
		\frac{\partial\varphi(\Theta)}{\partial b_{i}^{(u)}} = \left[\tilde\sigma'(\mathcal{N}^\Theta_{u})   
		A^{(u+1)T}
		\cdots\tilde\sigma'(\mathcal{N}^\Theta_{m-1})A^{(m)T}F'(\mathcal{N}^\Theta_m)\right]_i.
	\end{equation}
    \endgroup
Using exactly the same reasoning as in Step 4 of Lemma \ref{lem:Lip-b}, a Lipschitz constant $\mathcal{L}_{u}$ bound for $\mathcal{N}^\Theta_{u}$ satisfies
	$\mathcal{L}^2_{u} :=D^2_+\mathcal{L}^2_{u-1}+\ell_{u}+1$ with $\mathcal{L}^2_{0}=1+||\Sigma||_S^2=L_Z^2$ and $D_+:=\max\{D\sigma'_{\max},1.001\}>1$, so that 	
	\begingroup\makeatletter\def\f@size{7}\check@mathfonts
	\def\maketag@@@#1{\hbox{\m@th\normalsize\normalfont#1}} 
   \begin{equation*}
	\mathcal{L}^2_{u} = D_+^{2u}L_Z^2 + \sum_{j=1}^{u}D_+^{2(u-j)}(\ell_{j}+1)\leq D_+^{2u}\left(L_Z^2+\frac{\ell_{\max}+1}{D_+^2-1}\right).
	\end{equation*}
	\endgroup
To estimate the Lipschitz constant of \eqref{eq:big product}, Lemma \ref{lem:Gen-Lip} with 
$\phi_1\equiv \frac{\partial F}{\partial v_\nu}$, $\phi_2\equiv \mathcal{N}^\Theta_{m}$, $\phi_3\equiv A^{(m)}_{\nu,i}$ gets us a Lipschitz constant bound of $|A^{(m)}_{\nu,i}|L_{F'}\mathcal{L}_m$ for $A^{(m)}_{\nu,i}\frac{\partial F(\mathcal{N}^\Theta_{m})}{\partial v_\nu}$ and a bound of $|A^{(m)}_{\nu,i}|L_{F}$. With $\phi_1\equiv\tilde\sigma'$, $\phi_2\equiv \mathcal{N}^\Theta_{m-1}$, and $\phi_3\equiv A^{(m)T}_{:,i}F'(\mathcal{N}^\Theta_{m})$, we get a Lipschitz constant estimate of $\left(\sigma''_{\max}\mathcal{L}_{m-1}L_F + \mathcal{L}_{m}L_{F'}\right)||A^{(m)}_{:,i}||$ for $[\sigma'(\mathcal{N}^\Theta_{m-1})A^{(m)T}F'(\mathcal{N}^\Theta_m)]_i$. Pursuing as follows, we end with a Lipschitz constant bound for $\frac{\partial\varphi(\Theta)}{\partial b_{i}^{(u)}}$ and $u<m$ of 
\begingroup\makeatletter\def\f@size{9}\check@mathfonts
\def\maketag@@@#1{\hbox{\m@th\normalsize\normalfont#1}} 
   \begin{align*}
	L_i^{(u)}:=&||A^{(u+1)}_{:,i}||\cdots||A^{(m)}||\left(\sigma'_{\max}\right)^{m-u-2}\times\\
	&\quad\left(L_{F'}\mathcal{L}_{m}\sigma'_{\max}+L_F\sigma''_{\max}\textstyle\sum\limits_{k=u}^{m-1}\mathcal{L}_{k}\right).
	\end{align*}
	\endgroup
	For $u = m$, we can simply take $L_i^{(m)}:=L_{F'}$.
Note that 
\begingroup\makeatletter\def\f@size{9}\check@mathfonts
\def\maketag@@@#1{\hbox{\m@th\normalsize\normalfont#1}} 
   \begin{align*}
	&\textstyle\sum\limits_{k=u}^{m-1}\mathcal{L}_{k}\le \frac{D_+^m-D_+^u}{D_+-1}\left(L_Z+\frac{\sqrt{\ell_{\max}+1}}{D_+-1}\right).
	\end{align*}
	\endgroup
Using \eqref{eq:the ell_u} and summing up over $i$ yields
     	\begingroup\makeatletter\def\f@size{7}\check@mathfonts	\def\maketag@@@#1{\hbox{\m@th\normalsize\normalfont#1}} 
	\begin{align*}
	&\sum_{i=1}^{\ell_{u+1}}\sum_{j=1}^{\ell_{u}}\bigg(\frac{\partial\varphi(\Theta)}{\partial A_{i,j}^{(u)}}-\frac{\partial\varphi(\bar\Theta)}{\partial A_{i,j}^{(u)}}\bigg)^2 +\left(\frac{\partial\varphi(\Theta)}{\partial b_{i}^{(u)}}-\frac{\partial\varphi(\bar\Theta)}{\partial b_{i}^{(u)}}\right)^2\\
	&\le(1+\ell_u)\sum_{i=1}^{\ell_{u+1}}\left(L_i^{(u)}\right)^2||\Theta-\bar\Theta||^2.
	\end{align*}
    \endgroup 
      Finally, by summing $u$ from $0$ to $m$, we obtain that
      	\begingroup\makeatletter\def\f@size{7}\check@mathfonts	\def\maketag@@@#1{\hbox{\m@th\normalsize\normalfont#1}} 
	\begin{align*}
    L^2_m = \mathcal{C}\max\Big\{kn^3D^4\ell_{\max},&nD^{4m+2}\max\{\ell_{\max},L_Z^2\}\times\\
    & \max\{nD^2L_Z^2,nD^2\ell_{\max}
    \Big\},
	\end{align*}
    \endgroup   
where $\mathcal{C}$ is a constant that only depends polynomially on $\mu_{\max}''$, $\sigma_{\max}'$, and $\sigma_{\max}''$ (with powers in $O(m)$).

     The second part of Theorem~\ref{thm:convergence} is well-known in optimization theory; see, e.g., \cite[Section~1.2.3]{nesterov2013introductory}.
\end{proof}
%
 \section*{Acknowledgment}
	\noindent
C.\ Herrera gratefully acknowledges the support from ETH-foundation and from the Swiss National Foundation for the project \textit{Mathematical Finance in the light of machine learning}, SNF Project 172815.\\ A.\ Neufeld gratefully acknowledges the financial support by his Nanyang Assistant Professorship (NAP) Grant \textit{Machine Learning based Algorithms in Finance and Insurance}.\\
The authors would like to thank
 Florian Krach, Hartmut Maennel, Maximilian Nitzschner, and Martin Stefanik 
for their careful reading and suggestions.
Special thanks go to Josef Teichmann for his numerous helpful suggestions, ideas, and discussions.

\bibliographystyle{plain}


      \end{document}